\newcommand{\N}{\mathbb{N}}
\newcommand{\F}{\mathbb{F}}
\newcommand{\Prim}{\mathbb{P}}
\newcommand{\cha}{\text{$\unlhd$\raisebox{0pt}{$\stackrel{{}_\vert}{}$} }}
\newcommand{\Aff}{\text{\rm Aff($1,p$)}}
\newcommand{\AffF}{\text{\rm Aff($1,F$)}}
\newcommand{\bigslant}[2]{\left.\raisebox{.2em}{$#1$}\middle/\raisebox{-.2em}{$#2$}\right.}
\DeclareMathOperator{\Sym}{\text{\rm Sym}}
\newcommand{\LL}{\mathcal{L}}
\DeclareMathOperator{\RM}{\text{\rm RM}}
\DeclareMathOperator{\Stab}{\text{\rm stab}}
\DeclareMathOperator{\Orb}{\text{\rm orb}}
\DeclareMathOperator{\rk}{\rm rk}
\newcommand{\BIGOP}[1]{\mathop{\mathchoice%
		{\raise-0.22em\hbox{\huge $#1$}}%
		{\raise-0.05em\hbox{\Large $#1$}}{\hbox{\large $#1$}}{#1}}}
\newtheoremstyle{my_th_style1}%
{6pt}
{12pt}
{}
{}
{\bfseries}
{:             }
{.5em}
{}
\newtheoremstyle{my_th_style2}%
{6pt}
{0pt}
{\itshape}
{}
{\bfseries}
{:             }
{.5em}
{}
\newtheoremstyle{my_th_style3}%
{6pt}
{12pt}
{}
{}
{\bfseries}
{:  }
{\newline}
{}
\theoremstyle{my_th_style2}
\newtheorem{Theorem}{Theorem}[section]
\newtheorem{Lemma}[Theorem]{Lemma}	
\newtheorem{Cor}[Theorem]{Corollary}
\newtheorem{Conj}[Theorem]{Conjecture}	
\theoremstyle{my_th_style3}
\theoremstyle{my_th_style1}
\newtheorem{Def}[Theorem]{Definition}     
\newtheorem{Rem}[Theorem]{Remark}	
\newtheorem{Example}[Theorem]{Example}	
\renewcommand*\MakeUppercase[1]{#1}
\renewcommand{\headrulewidth}{0.4pt}
\renewcommand{\footrulewidth}{0.4pt}
\renewcommand*{\env@matrix}[1][*\c@MaxMatrixCols c]{%
  \hskip -\arraycolsep
  \let\@ifnextchar\new@ifnextchar
  \array{#1}}
\renewenvironment{proof}[1][\proofname]{\par 
	\pushQED{\qed}%
	\normalfont \topsep6\p@\@plus6\p@\relax 
	\trivlist 
	\item[\hskip\labelsep 
	\bfseries 
	#1\@addpunct{:}]\ignorespaces 
}{%
\popQED\endtrivlist\@endpefalse 
} 
\begin{document}
\allowdisplaybreaks
\begin{titlepage}
\vspace*{2cm}
\begin{center}
{\Huge\textbf{Invariant transversals in }}\\
\vspace*{.2cm}
{\Huge\textbf{finite groups}}\\
\vspace*{.6cm}
{\Large von \\ Lucia Christin Ortjohann}\\
\vspace*{1.5cm}
{\Large \textbf{Masterarbeit in Mathematik}}\\
\vspace*{1.5cm}
{\Large vorgelegt der\\
	Fakultät für Mathematik, Informatik und Naturwissenschaften
	der Rheinisch-Westfälischen Technischen Hochschule Aachen}\\
\vspace*{1.5cm}
{\Large im Dezember 2019}\\
\vspace*{2cm}
{\Large angefertigt am Lehrstuhl D für Mathematik}\\
\vspace*{.5cm}
{\Large Erstgutachter: \\ Prof. Dr. Gerhard Hiß}\\
\vspace*{.3cm}
{\Large Zweitgutachterin: \\ Prof. Dr. Alice Niemeyer}\\
\end{center}
\end{titlepage}
\newpage

\fancyhf{}
\renewcommand*\MakeUppercase[1]{#1}
\fancyhead[C]{} 
\fancyhead[R]{\textit{Lucia C. Ortjohann}}
\renewcommand{\headrulewidth}{0.4pt}
\renewcommand{\footrulewidth}{0.4pt}

\section*{Acknowledgement}

I would like to express my gratitude to my supervisor Prof. Dr. Gerhard Hiß for proposing the interesting topic of this thesis and his continued support during its completion. 
Our frequent discussions and his meticulous reading of several drafts substantially helped me improve my thesis.

Furthermore, I would like to thank Prof. Dr. Alice Niemeyer for agreeing to be co-supervisor of this thesis.

Prof. Dr. Gabriele Nebe deserves a special mention for providing the central argument in the proof of Theorem \ref{MainTheoremAbelian}.

My thanks also goes to Darius Andreas Dramburg und Linus Paul Hellebrandt
for always answering my questions, even across time-zones, and providing a fresh view on the comprehensibility of my arguments.

Finally, I would like to thank my family and my friends for their unfailing support and continuous encouragement throughout my years of study. 

\hfill

Lucia C. Ortjohann\\
Aachen, December 2019

\newpage

\tableofcontents
\newpage

\fancyfoot[R]{\textit{Page \thepage}}
\setcounter{page}{1}

\section{Introduction}
\lhead{\slshape 1 \quad Introduction}
Our study of invariant transversals originates in loop theory. 
Loops are non-associative structures which arise, for example, in algebraic applications to geometry. 
Reinhold Baer observed that loops can be studied group-theoretically via the concept of a loop folder which is essentially a triple of a group, a subgroup and a transversal for the set of right cosets.

In research, we find several types of loops, amongst which are right conjugacy closed loops, in short RCC loops. 
Their group-theoretic counterpart, the right conjugacy closed loop folder
is a loop folder such that the transversal is invariant under conjugation by the group.
In this thesis we are especially interested in the existence of invariant transversals, i.e. transversals which are invariant under conjugation by the group, since in general, not every subgroup of a given group has an invariant transversal. 
From the connection mentioned above, we note that studying the existence of invariant transversals is closely linked to studying the existence of RCC loops.
Furthermore, the interplay between loop theory and group theory has prompted discoveries in both areas and led to new questions on both sides.
A summary of the history of algebraic research in loop theory can be found in \cite{articdiss}. 

This thesis is a continuation of the work of Artic and Hiss. 
In \cite{articdiss} Artic establishes basic results on RCC loop folders and computes all non-associative RCC loops of order up to 30 and in \cite{artichiss} the authors Artic and Hiss classify the RCC loops of order $2p$, where $p$ is an odd prime, up to isomorphism.

Our main object of study is an RCC loop folder which consists of a finite group $G$, a subgroup $H$ of $G$ and a transversal $T$ for the set of right cosets of $H$ in $G$ such that $1 \in T$ and $T^g=T$ for all $g \in G$, i.e. $T$ is invariant under conjugation by $G$. 
Artic showed in \cite[(2.6) Remark]{articdiss} that, for a group $G$ and a subgroup $H$ of $G$ with $G' \cap H = \{1\}$, there exists a transversal $T$ for the set of right cosets of $H$ in $G$ such that $(G,H,T)$ is an RCC loop folder. 
We generalize this statement and give conditions under which a converse statement holds.
Based on the classification of RCC loops of order $2p$, we begin classifying RCC loops of order $pq$, where $p$ and $q$ are distinct primes.
We observed in our investigation of the classification and of the computational results from Artic that Frobenius groups frequently appear as well-behaved examples. 
Another noteworthy result from these considerations is
that every abelian group with a small enough subgroup has a transversal for the set of right costs of this subgroup which generates the whole group.
All these observations lead us to the construction of infinite series of RCC loop folders.

This thesis is divided into six chapters. 
In Chapter 2 we give an introduction to loop theory and explain the connection between loops, loop folders and invariant transversals. 
In the third chapter we examine Frobenius groups and discover that, for a Frobenius group $G$ with an abelian Frobenius complement and a subgroup which lies in this complement, there exists an invariant transversal with a special form. 
Furthermore, we give some examples of Frobenius groups which fulfil the above stated conditions. 
In Chapter 4 we start to classify RCC loops of order $pq$, where $p$ and $q$ are distinct primes.
The next chapter concerns the construction of RCC loop folders. First, we construct these loop folders by adding direct and semi-direct factors to an existing RCC loop folder. Then, we construct invariant generating transversal from given RCC loop folders. 
Next, we prove the existence of invariant generating transversals for a subgroup in an abelian group, provided the index of the chosen subgroup is large enough.
In Chapter 6, we state and prove partial converses of \cite[(2.6) Remark]{articdiss} under various assumptions which lead to the consideration of $H$-invariant transversals.

Our group-theoretical notation is standard. 
In particular, we write $G'$ for the commutator subgroup of the group $G$.
A cyclic group of order $n$ is denoted by $C_n$ and a characteristic subgroup $N$ of a group $G$ is denoted by $N \cha G$. The difference of two sets $B$ and $A$ is denoted by $B - A := \{x \in B \mid x \notin A\}$.
We assume all groups to be acting from the right and thus, only consider right transversals.
All groups, loops and quasigroups are finite. 

\newpage
\section[Connection between invariant transversals and RCC loops]{Connection between invariant transversals and RCC loops}

\lhead{\slshape 2 \quad Invariant transversals and RCC loops}

We express our study of invariant transversal in finite groups in the language of loops. Therefore, this chapter provides an introduction to the topic of loops and loop folders.
 First, we give definitions and prove some basic structural observations. 
Then we define right conjugacy closed (RCC) loops and RCC loop folders and obtain some properties of these objects. 
These RCC loop folders are directly connected to invariant transversals.
The whole chapter is based on \cite[Chapter 1 - 2.2]{articdiss}.

\subsection{Loops and loop folders}

To begin with, we introduce basic concepts from the field of loop theory.   

\begin{Def}[Quasigroup and loop]
	A quasigroup $\mathcal{L}$ is a set with a binary operation $\ast : \mathcal{L} \times \mathcal{L} \rightarrow \mathcal{L}$, such that every equation $x \ast a = b$ or $a \ast x = b$ with $a,b \in \mathcal{L}$ has a unique solution $x \in \LL$.
	
	A quasigroup $\mathcal{L}$ is called a loop, if there is an identity element $1_\LL$ of $\mathcal{L}$.
\end{Def}

If the loop is associative, it is a group.

\begin{Def}[Right multiplication group]
	Let $\mathcal{L}$ be a loop and $x \in \mathcal{L}$. We define $R_x$ to be the right multiplication by $x$:
	\begin{align*}
		R_x : \LL \rightarrow \LL , l \mapsto l\ast x.
	\end{align*}
	We set $R_{\LL}:=\{R_x \mid x \in \LL \}$. Then $R_{\LL}$ is a subset of the symmetric group Sym($\LL$). We define the right multiplication group RM($\LL$) of $\LL$ as a subgroup of Sym($\LL$) by
	\begin{align*}
		\RM(\LL):= \langle R_{\LL} \rangle.
	\end{align*}
\end{Def}

\begin{Rem}
	\label{OperationL}
The right multiplication group $\RM(\LL)$ of a loop $\LL$ acts transitively on $\LL$ since for two arbitrary elements $x,y \in \LL$ there exists a uniquely determined element $l \in \LL$ with $x * l = y$ and thus, $xR_l=y$. Furthermore, let $x \in \LL$ such that $l*x=l$ for all $l \in \LL$. As $x$ is uniquely determined, it follows that $x=1_\LL$ and hence, $\RM(\LL)$ acts faithfully on $\LL$.   
\end{Rem}

\pagebreak

\begin{Def}[Envelope of a loop]
	Let $\LL$ be a loop with identity element $1_{\LL}$. The triple $(\RM(\LL),\Stab_{\RM(\LL)}(1_{\LL}),R_{\LL})$ is called the envelop of $\LL$.
\end{Def}

The generalization of envelops of loops are loop folders.

\begin{Def}[Loop folder]
	A triple $(G,H,T)$ is called a loop folder, if $G$ is a finite group, $H$ is a subgroup of $G$ and $T$ is a transversal for the right cosets of every $H^{g}$, $g \in G$, with $1_G \in T$.
	We define the order of $(G,H,T)$ as the size of $T$.
	
	We call the loop folder trivial if $|H|=1$ or $|T|=1$.
\end{Def}

\begin{Def}
	Let $(G,H,T)$ be a loop folder. The group $G$ acts via right multiplication on the set of right cosets $H\backslash G = \{ Ht \mid t \in T \}$ of $H$. This action is transitive. We say that $(G,H,T)$ is faithful if $G$ acts faithfully on $H \backslash G$.
\end{Def}

Now we give an equivalent characterisation of a faithful loop folder. For this purpose, we set
\begin{align*}
\text{\rm core}_G(H):=  \displaystyle\bigcap_{g \in G} H^g.
\end{align*}

\begin{Lemma}
	\label{faithful}
	The loop folder $(G,H,T)$ is faithful if and only if \linebreak
	 $\text{\rm core}_G(H)=\{1_G\}$.
\end{Lemma}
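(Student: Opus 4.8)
The plan is to compute the kernel $K$ of the right-multiplication action of $G$ on the coset space $H\backslash G$ and to verify that it equals $\text{\rm core}_G(H)$; since an action is faithful exactly when its kernel is trivial, the claimed equivalence then follows immediately.

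First I would observe that, because $T$ is a transversal for the right cosets of $H$, the indexing set in $H\backslash G = \{Ht \mid t \in T\}$ runs over \emph{all} right cosets of $H$ in $G$, so requiring $g$ to fix $Ht$ for every $t \in T$ is the same as requiring $Hxg = Hx$ for every $x \in G$. An element $g \in G$ therefore lies in $K$ if and only if $Hxg = Hx$ holds for all $x \in G$. Rearranging, $Hxg = Hx$ is equivalent to $xgx^{-1} \in H$, that is, to $g \in x^{-1}Hx = H^x$, where I use the convention fixed in the introduction that $G$ acts from the right, so that $H^x = x^{-1}Hx$.

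Intersecting this condition over all $x \in G$ yields $g \in \bigcap_{x \in G} H^x = \text{\rm core}_G(H)$, and conversely every element of $\text{\rm core}_G(H)$ satisfies $Hxg = Hx$ for all $x$ and hence fixes each coset. This gives $K = \text{\rm core}_G(H)$. The loop folder $(G,H,T)$ is faithful precisely when $K = \{1_G\}$, which is exactly the condition $\text{\rm core}_G(H) = \{1_G\}$, completing the argument.

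The main obstacle here is bookkeeping rather than genuine difficulty: one must handle the left/right conventions carefully so that the coset identity $Hxg = Hx$ translates into membership in the correctly-sided conjugate $H^x$, and one must confirm that replacing the transversal-indexed family $\{Ht \mid t \in T\}$ by the full coset space introduces no additional constraints. Beyond these points, the statement is the standard fact that the kernel of the action of $G$ on the right cosets of a subgroup $H$ is the core $\text{\rm core}_G(H)$.
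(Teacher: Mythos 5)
Your proposal is correct and follows essentially the same route as the paper: both identify the kernel of the action on $H\backslash G$ with $\bigcap_{x\in G}H^x$ via the equivalence $Hxg=Hx \Leftrightarrow g\in H^x$, the only cosmetic difference being that the paper first records the kernel as $\bigcap_{t\in T}H^t$ and then uses $g=ht$ to pass to the intersection over all of $G$, whereas you note at the outset that the cosets $Ht$, $t\in T$, already exhaust $H\backslash G$.
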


\begin{proof}
	$G$ acts faithfully on $H \backslash G$ if and only if $Htg=Ht$ for all $t \in T$ implies $g=1_G$. Since we have $Htg=Ht$ if and only if $g \in t^{-1}Ht$, it follows that $G$ acts faithfully on $H \backslash G$ if and only if
	\begin{align*}
		\{1_G\}=\bigcap_{t \in T} H^t.
	\end{align*}
	Since for every $g \in G$ there exists $h \in H$ and $t \in T$ such that $g=ht$, it follows that
	\begin{align*}
	\{1_G\}&=\bigcap_{t \in T} H^t =\bigcap_{g \in G } H^g. 
	 &&\qedhere
	\end{align*}
\end{proof}

We characterise the envelop of a loop.

\begin{Lemma}
	\label{Lemma envelop faithful}
	The envelope $(\RM(\LL),\Stab_{\RM(\LL)}(1_{\LL}),R_{\LL})$ of a loop $\LL$ is a faithful loop folder.
\end{Lemma}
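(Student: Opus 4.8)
The plan is to verify directly that the triple satisfies the two defining conditions --- being a loop folder and being faithful --- by translating everything into the language of the transitive action of $\RM(\LL)$ on the set $\LL$. First I would record the easy structural facts: $\RM(\LL) \le \Sym(\LL)$ is finite because $\LL$ is finite, $H = \Stab_{\RM(\LL)}(1_{\LL})$ is by definition a subgroup, and $1_G = R_{1_{\LL}} \in R_{\LL} = T$ since $l \ast 1_{\LL} = l$ for all $l \in \LL$ shows that $R_{1_{\LL}}$ is the identity permutation.

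The heart of the argument is the transversal condition, which must hold not only for $H$ but for every conjugate $H^g$, $g \in G$. Here I would use the standard identity for point stabilizers under a right action, namely $H^g = \Stab_{\RM(\LL)}(1_{\LL})^g = \Stab_{\RM(\LL)}(1_{\LL}g)$; since the action is transitive by Remark \ref{OperationL}, as $g$ runs over $G$ the point $y := 1_{\LL}g$ runs over all of $\LL$. Thus it suffices to show, for each fixed $y \in \LL$, that $T = R_{\LL}$ is a transversal for the right cosets of $\Stab_{\RM(\LL)}(y)$. Two elements $R_x, R_{x'} \in T$ lie in the same right coset of $\Stab_{\RM(\LL)}(y)$ precisely when $R_x R_{x'}^{-1} \in \Stab_{\RM(\LL)}(y)$, that is, when $yR_x = yR_{x'}$, i.e. $y \ast x = y \ast x'$. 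The defining property of a quasigroup guarantees that the map $x \mapsto y \ast x$ is a bijection of $\LL$, so the cosets $\Stab_{\RM(\LL)}(y)R_x$ for $x \in \LL$ are pairwise distinct and exhaust the right cosets of $H^g$ in $G$. Hence $T$ is a transversal for the right cosets of each $H^g$, and $(G,H,T)$ is a loop folder.

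Finally, for faithfulness I would invoke Lemma \ref{faithful}, which reduces the claim to showing $\mathrm{core}_G(H) = \{1_G\}$. Since $\mathrm{core}_G(H) = \bigcap_{g \in G} H^g = \bigcap_{y \in \LL} \Stab_{\RM(\LL)}(y)$ is exactly the kernel of the action of $\RM(\LL)$ on $\LL$, and this action is faithful by Remark \ref{OperationL}, the core is trivial, as required.

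I expect the main obstacle to be the clause requiring a transversal for \emph{every} conjugate $H^g$ rather than for $H$ alone. The key realisation that removes the difficulty is that each $H^g$ is again a point stabilizer, so that a single quasigroup axiom --- the bijectivity of the left translation $x \mapsto y \ast x$ --- handles all of the conjugates simultaneously.
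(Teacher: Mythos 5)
Your proof is correct and takes essentially the same route as the paper's: the uniqueness of solutions in the loop shows that distinct elements of $R_{\LL}$ lie in distinct right cosets of each conjugate $H^g=\Stab_{\RM(\LL)}(1_{\LL}g)$, and the transitivity and faithfulness of the action from Remark \ref{OperationL} supply the rest. The only cosmetic difference is that you obtain exhaustion of the cosets from the surjectivity of the left translation $x \mapsto y \ast x$ together with the coset--orbit correspondence, where the paper instead counts cosets via the orbit--stabilizer theorem; the two are interchangeable.
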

\begin{proof}
	We write $(G,H,T)$ for the envelop $(\RM(\LL),\Stab_{\RM(\LL)}(1_{\LL}),R_{\LL})$ of the loop $\LL$. 
	First, we show that $T$ is a transversal for every $H^g$, $g \in G$. Suppose that $R_xR_y^{-1}=g^{-1}hg \in H^g$ for $x,y \in \LL, g \in G$ and $h \in H$ and we set $l:=(1_{\LL})g$. Then we have $lg^{-1}hg=l$ and thus, $lR_xR_y^{-1}=l$. This yields the equation $l*x=l*y$ in $\LL$. Since $\LL$ is a loop the solution of this equation is unique and hence, $x=y$. This gives us directly $R_x=R_y$. Therefore, $T$ contains at most one element of every coset of $H^g$ for all $g \in G$.
	Furthermore, with the orbit stabilzer theorem and the fact that $G$ acts transitively on $\LL$ (see Remark \ref{OperationL}), it follows that
	\begin{align*}
		\frac{|G|}{|H^g|}=\frac{|G|}{|H|}=\frac{|G|}{|\Stab_{\RM(\LL)}(1_\LL)|}=|\Orb(1_\LL)|=|\LL|=|T| \quad \text{for all } g \in G.
	\end{align*}
	This implies that $(G,H,T)$ is a loop folder.
	
	Now we show that $(G,H,T)$ is faithful. 
	We know from Remark \ref{OperationL} that $G$ acts transitively and faithfully on $\LL$. Thus, the operation of $G$ on the right cosets of $H=\Stab_{G}(1_\LL)$ in $G$ is equivalent to the action of $G$ on $\LL$ and we can conclude that $G$ acts faithfully on $H \backslash G$. 
\end{proof}

\begin{Rem}
	Let $\LL$ be a loop and $(G,H,T)$ be the envelop of $\LL$. Suppose that $G$ is abelian. Then $H=\text{\rm Core}_G(H)=\{1_G\}$ because the envelop is a faithful loop folder. Thus, $G=T \cong \LL$ and we can conclude that $\LL$ is associative. Hence, $\LL$ is a group. Furthermore, an associative loop is a group. Thus, the right multiplication group of a non-associative loop is non-abelian.
\end{Rem}

Given a loop folder $(G,H,T)$ we can construct a loop $(T,\ast)$ on the set $T$.  
However, the envelop of $T$ need not be equal to $(G,H,T)$. Only a faithful loop folder $(G,H,T)$ with $G=\langle T \rangle $ is an envelope of a loop.

\begin{Def}
	Let $G$ be a group and $H$ be a subgroup of $G$. A transversal $T$ for $H$ in $G$ is called a generating transversal if $G=\langle T \rangle$.
\end{Def}

\begin{Lemma}
	\label{LoopFromLoopFolder}
	Let $(G,H,T)$ be a loop folder. Define a multiplication $*$ on $T$ by $t_1 * t_2 = t_3$, where $t_3$ is the uniquely determined element in $Ht_1t_2 \cap T$. Then $(T,*)$ is a loop.
\end{Lemma}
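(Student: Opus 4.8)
The plan is to verify the three defining properties of a loop in turn: that $*$ is well-defined, that $1_G$ serves as an identity element, and that every equation of the form $x*a=b$ or $a*x=b$ has a unique solution. Well-definedness is immediate from the loop folder axioms: taking $g=1_G$, the set $T$ is a transversal for the right cosets of $H$, so the coset $Ht_1t_2$ contains exactly one element of $T$, and this element is by definition $t_3$. Hence $t_1*t_2$ is determined without ambiguity.

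For the identity, I would use that $1_G \in T$ by assumption. Given $t \in T$, the product $t*1_G$ is the unique element of $Ht \cap T$, which is $t$ itself since $t \in T$; symmetrically, $1_G * t$ is the unique element of $Ht \cap T = \{t\}$, so $1_G*t = t$ as well. Thus $1_G$ is a two-sided identity for $*$.

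The heart of the argument is the quasigroup property. For the equation $x*a=b$ with $a,b \in T$, I would rewrite the condition $b \in Hxa$ as $x \in Hba^{-1}$; since $T$ is a transversal for the right cosets of $H$, there is exactly one $x \in T$ lying in the coset $Hba^{-1}$, which gives the unique solution. The equation $a*x=b$ is more delicate and is where the full strength of the loop folder definition enters. Here $b \in Hax$ is equivalent to $x \in a^{-1}Hb = H^a(a^{-1}b)$, a right coset of the conjugate subgroup $H^a = a^{-1}Ha$. Because the loop folder axioms require $T$ to be a transversal for the right cosets of $H^g$ for \emph{every} $g \in G$ --- in particular for $g=a$ --- this coset meets $T$ in exactly one point, yielding the unique solution $x$.

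I expect the main obstacle to be recognising that left division, unlike right division, cannot be resolved using only the transversal property of $T$ with respect to $H$ itself; it genuinely requires $T$ to be a transversal for the conjugate $H^a$. This is precisely the reason the definition of a loop folder demands the transversal condition for all conjugates of $H$ rather than for $H$ alone, and making this dependence explicit is the one point in the proof that deserves care.
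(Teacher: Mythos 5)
Your proposal is correct and follows essentially the same route as the paper: both reduce right division to the transversal property for $H$ itself and left division to the transversal property for the conjugate $H^a$, which is exactly the point you rightly flag as the one requiring care. The only difference is that you exhibit the solution directly (as the unique element of $T$ in the relevant coset), whereas the paper only argues uniqueness of solutions and leaves existence to the implicit finiteness of $T$; your version is marginally more complete on that point.
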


\begin{proof}
	Clearly, this multiplication is a binary operation since the solution is a unique element. Let $s,t \in T$. Then we show that the equations $x * t =s$ and $t*x=s$ have a unique solution $x \in T$. Suppose that the equation $x*t=s$ has two solutions $x_1,x_2 \in T$. Then it follows that $Hx_1t=Hx_2t$, thus, $Hx_1=Hx_2$. 
	Since $T$ is a transversal for $H \backslash G$ , we have $x_1=x_2$. Again, suppose that the equation $t*x=s$ has two solutions $x_1,x_2 \in T$. 
	Then this yields that $Htx_1=Htx_2$, hence, $H^tx_1=H^tx_2$. 
	Because $T$ is a transversal for $H^t \backslash G$, this implies that $x_1=x_2$. Furthermore, we have $1_G \in T$ and $1_G * t = t = t * 1_G$. Therefore, $T$ is a loop.
\end{proof}

In order to proof the next theorem we need an auxiliary statement.

\begin{Lemma}
	\label{HelpForThis}
	Let $(G,H,T)$ be a loop folder and let $(T,*)$ be the loop defined in Lemma \ref{LoopFromLoopFolder}. Then we have
	\begin{align*}
		H((x_1 *x_2)\dotsb*x_n)=Hx_1 \dotsb x_n \quad\text{for all } x_1, \dotsc, x_n \in T.
	\end{align*}
 Hence, the element $((x_1 *x_2)\dotsb*x_n)$ is the uniquely determined element of $Hx_1 \dotsb x_n \cap T$.
\end{Lemma}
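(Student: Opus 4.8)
The plan is to establish the coset identity by induction on $n$, viewing the left-associated product $((x_1 * x_2) \dotsb * x_n)$ as a single application of $*$ to the truncated product $((x_1 * x_2) \dotsb * x_{n-1})$ and the last factor $x_n$.

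The base case $n=1$ is immediate, since $H(x_1) = Hx_1$. For the inductive step I would write $y := ((x_1 * x_2) \dotsb * x_{n-1})$ and assume $Hy = Hx_1 \dotsb x_{n-1}$. By construction the $n$-fold left-associated product equals $y * x_n$, and the definition of $*$ in Lemma \ref{LoopFromLoopFolder} tells us that $y * x_n$ is the uniquely determined element of $Hyx_n \cap T$; in particular $y * x_n \in Hyx_n$, so $H(y * x_n) = Hyx_n$. Multiplying the inductive hypothesis on the right by $x_n$ gives $Hyx_n = Hx_1 \dotsb x_{n-1}x_n$, and chaining the two equalities yields $H((x_1 * x_2)\dotsb * x_n) = Hx_1 \dotsb x_n$, as required.

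For the concluding claim, I would observe that $((x_1 * x_2)\dotsb * x_n)$ is a member of $T$, being obtained by repeated application of the loop operation to elements of $T$, and that it lies in $Hx_1 \dotsb x_n$ by the identity just proved. Since $T$ is a transversal for the right cosets of $H$, the intersection $Hx_1 \dotsb x_n \cap T$ is a singleton, whence $((x_1 * x_2)\dotsb * x_n)$ is its unique element.

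The argument is entirely routine, and I expect no genuine obstacle; the only step demanding attention is the correct bookkeeping of the left-associated bracketing, so that the first $n-1$ factors are genuinely grouped into a single loop element $y$ that serves as the left argument of the final multiplication by $x_n$. In particular, the definition of $*$ is only ever applied to \emph{two} arguments at a time, and it is precisely the inductive hypothesis on $Hy$ that lets the recursion on the group side ($Hyx_n = Hx_1 \dotsb x_n$) mirror the recursion on the loop side ($y * x_n$).
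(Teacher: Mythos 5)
Your proof is correct and follows the same route as the paper: induction on $n$, grouping the first $n-1$ factors into a single loop element $t$ and using the defining property of $*$ to pass from $H(t*x_n)$ to $Htx_n$. The paper's own proof is essentially identical, though it leaves the final uniqueness observation implicit where you spell it out.
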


\begin{proof}
	We show this statement by induction over $n$. By definition the base case is clear. Thus, suppose that  $H((x_1 *x_2)\dotsb*x_{n-1})=Hx_1 \dotsb x_{n-1} $ for $n-1 \geq 1$ and set $t:=((x_1 *x_2)\dotsb*x_{n-1})$. Then it follows from the definition and the induction hypothesis that 
	\begin{align*}
		H((x_1 *x_2)\dotsb*x_{n})&=H(t * x_n)=Htx_n=Hx_1\dotsb x_{n-1}x_n.
		&&\qedhere
	\end{align*}
\end{proof}

Now we show that a faithful loop folder $(G,H,T)$ with $G=\langle T \rangle $ is an envelope of a loop.

\begin{Theorem}
	\label{envelop}
	Let $(G,H,T)$ be a faithful loop folder and suppose that $T$ is a generating transversal. Then the envelop of the loop $(T,*)$ defined in Lemma \ref{LoopFromLoopFolder} is isomorphic to $(G,H,T)$. Hence, a faithful loop folder with a generating transversal is an envelop of a loop.
\end{Theorem}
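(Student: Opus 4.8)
The plan is to exhibit an explicit isomorphism of loop folders, that is, a group isomorphism $\rho\colon G \to \RM(T)$ satisfying $\rho(H)=\Stab_{\RM(T)}(1_T)$ and $\rho(T)=R_T$, where $1_T=1_G$ is the identity of the loop $(T,*)$ from Lemma \ref{LoopFromLoopFolder}. The natural candidate is the permutation representation afforded by the right-multiplication action of $G$ on the right cosets $H\backslash G$, transported to $T$ along the bijection $Ht \leftrightarrow t$ ($t\in T$). Concretely, for $g\in G$ I define $\rho(g)\in\Sym(T)$ by letting $s\,\rho(g)$ be the unique element of $Hsg\cap T$; this is well defined because $T$ is a transversal for $H\backslash G$, so each coset $Hx$ meets $T$ in exactly one point, and $|T|=[G:H]$ makes the bijection $Ht\leftrightarrow t$ legitimate. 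Since the coset action is a right action and the paper's conventions are throughout right actions, $\rho$ is a group homomorphism into $\Sym(T)$.

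The central step is to identify $\rho$ on the transversal. For $t,s\in T$, the element $s\,\rho(t)$ is by definition the unique element of $Hst\cap T$, which is exactly $s*t=sR_t$ by the definition of the loop multiplication in Lemma \ref{LoopFromLoopFolder}. Hence $\rho(t)=R_t$ for every $t\in T$, so $\rho(T)=R_T$. Because $\rho$ is a homomorphism and $T$ is a generating transversal, it follows that
\[
\rho(G)=\rho(\langle T\rangle)=\langle \rho(T)\rangle=\langle R_T\rangle=\RM(T),
\]
so $\rho$ maps $G$ onto $\RM(T)$.

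For injectivity I would invoke faithfulness. If $g\in\Kernel(\rho)$ then $s\,\rho(g)=s$ for all $s\in T$, i.e.\ $Hsg=Hs$ for all $s\in T$, which says $g\in\bigcap_{t\in T}H^t=\text{\rm core}_G(H)$; this is trivial by Lemma \ref{faithful}. Thus $\rho$ is an isomorphism from $G$ onto $\RM(T)$. It remains to match $H$ with the point stabiliser. If $h\in H$ then $H\,1_G\,h=Hh=H=H\,1_G$, so $1_G\,\rho(h)=1_G=1_T$ and $\rho(H)\subseteq\Stab_{\RM(T)}(1_T)$. Equality follows by counting: since $\RM(T)$ acts transitively on $T$ (Remark \ref{OperationL}), the orbit--stabiliser theorem gives $|\Stab_{\RM(T)}(1_T)|=|\RM(T)|/|T|=|G|/[G:H]=|H|=|\rho(H)|$, the last equality because $\rho$ is injective. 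Hence $\rho(H)=\Stab_{\RM(T)}(1_T)$, and $\rho$ is the desired isomorphism of loop folders.

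I expect the main obstacle to be bookkeeping rather than a deep idea: one must verify that $\rho$ is a well-defined homomorphism under the right-action conventions and that the identification $\rho(t)=R_t$ genuinely drops out of the definition of $*$. Reassuringly, the two hypotheses enter at exactly one point each — the generating property yields surjectivity onto $\RM(T)$, and faithfulness yields injectivity — so the argument collapses if either is removed. (Lemma \ref{HelpForThis} could be used to re-derive $\rho(t)=R_t$ and to describe the action of a general word in $T$, but the one-step definition of $*$ already suffices.)
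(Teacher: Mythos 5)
Your proof is correct, but it builds the isomorphism from the opposite end to the paper. The paper defines a map $\Phi$ from the envelope to $(G,H,T)$ by sending a word $R_{x_1}\dotsb R_{x_n}$ to $x_1\dotsb x_n$; since elements of $\RM(T)$ are only given as words in the generators $R_T$, the paper must first prove $\Phi$ is well defined, and this is where Lemma \ref{HelpForThis} and faithfulness enter (two equal words in the $R_{x_i}$ force $H^t x_1\dotsb x_n=H^t y_1\dotsb y_n$ for all $t$, hence equality in $G$ by triviality of the core). You instead take the canonical permutation representation $\rho$ of $G$ on $H\backslash G$ transported to $T$, which is well defined and a homomorphism for free, and then observe $\rho(t)=R_t$ directly from the definition of $*$; this eliminates the need for Lemma \ref{HelpForThis} and the entire well-definedness verification, at the cost of the (easy) check that $\rho$ really lands onto $\RM(T)$. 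The two arguments are mirror images: your use of faithfulness for injectivity of $\rho$ is exactly the paper's use of it for well-definedness of $\Phi$, generation gives surjectivity in both, and both finish the identification of $H$ with the point stabiliser by the same orbit--stabiliser counting (the paper counts $|\widetilde{T}|\geq|T|$ to force $\Phi(\widetilde{H})=H$, you count $|\Stab_{\RM(T)}(1_T)|=|\RM(T)|/|T|=|H|$). Your route is arguably the cleaner of the two precisely because the inverse direction is where the canonical, coordinate-free construction lives.
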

\begin{proof}
	We denote the envelop of $(T,*)$ by $(\widetilde{G},\widetilde{H},\widetilde{T})$. Now we show that 
	\begin{align*}
		\Phi: \widetilde{G} \rightarrow G, R_{x_1} \dotsb R_{x_n} \mapsto x_1 \dotsb x_n
	\end{align*}
	is a group isomorphism such that $\Phi(\widetilde{H})=H$ and $\Phi(\widetilde{T})=T$.
	
	First, we show that $\Phi$ is well defined. Let $ R_{x_1} \dotsb R_{x_n} = R_{y_1} \dotsb R_{y_n} \in \widetilde{G}$. Then 
	\begin{align*}
	(t * x_1 )\dotsb * x_n = (t * y_1)\dotsb * y_n \quad \text{for all } t\in T.
	\end{align*}
 	Now Lemma \ref{HelpForThis} yields that 
	\begin{align*}
	Htx_1 \dotsb x_n = Hty_1 \dotsb y_n \quad \text{for all } t\in T.
	\end{align*}
	Since $(G,H,T)$ is a faithful loop folder, it follows from 
	\begin{align*}
	H^tx_1 \dotsb x_n = H^ty_1 \dotsb y_n \quad \text{for all } t\in T
	\end{align*}
	that $x_1 \dotsb x_n = y_1 \dotsb y_n$.
	
	Clearly, $\Phi$ is a group homomorphism.
	Furthermore, it follows directly that $\Phi(\widetilde{T})=\Phi(R_T)=T$ and hence, we have 
	\begin{align*}	
	\Phi(\widetilde{G})=\Phi(\langle R_T \rangle)= \langle \Phi(R_T) \rangle =\langle T \rangle=G.
	\end{align*}
	Assume that $x_1 \dotsb x_n = y_1 \dotsb y_n \in G$. This implies that
	\begin{align*}
	Htx_1 \dotsc x_n = Hty_1 \dotsc y_n \quad \text{for all } t\in T.
	\end{align*} and hence, with Lemma \ref{HelpForThis} it follows that
	\begin{align*}
	(t * x_1 )\dotsb * x_n = (t * y_1)\dotsb * y_n \quad \text{for all } t\in T.
	\end{align*}
	This gives us  $ R_{x_1} \dotsb R_{x_n} = R_{y_1} \dotsb R_{y_n}$ and thus, $\Phi$ is bijective.
	
	Finally, we show that $\Phi(\widetilde{H})=H$. Let $h \in \widetilde{H}$. Then we have $h=R_{x_1} \dotsb R_{x_n}$ for some $x_1, \dotsc, x_n \in T$ and it follows that 
	\begin{align*}
	1_{(T,*)}=1_{(T,*)}  h=1_{(T,*)}(R_{x_1} \dotsb R_{x_n})=(x_1 * x_2)\dotsb * x_n
	\end{align*}
	Note that $1_G=1_{(T,*)}$.
	Again, Lemma \ref{HelpForThis} implies that
	\begin{align*}
	H=H1_{(T,*)}=H((x_1 * x_2)\dotsb * x_n)=Hx_1\dotsb x_n
	\end{align*}
	and hence, $x_1 \dotsb x_n \in H$. We obtain $\Phi(h) \in H$ and thus, $\Phi(\widetilde{H}) \subseteq H$. Moreover, this yields that 
	\begin{align*}
	 \big|\widetilde{T}\big|=\frac{\big|\widetilde{G}\big|}{\big|\widetilde{H}\big|}=\frac{\big|G\big|}{\big|\Phi(\widetilde{H})\big|} \geq \frac{\left|G\right|}{\left|H\right|} = \left|T\right|.
	\end{align*}
	As $|\widetilde{T}|=|T|$, we have equality and hence, we have $\Phi(\widetilde{H})=H$. 
\end{proof}

\subsection{RCC loops and RCC loop folders}

RCC loops and RCC loop folders form the connection between loops and invariant transversals. Therefore, we introduce this specialization of loops and loop folders.

\begin{Def}[RCC loop]
	A loop $\LL$ is called right conjugacy closed (RCC) if the set $R_{\LL}$ is closed under conjugation. I.e. for all $x,y \in \LL$ we have
	$R_x^{-1}R_y R_x \in R_{\LL}$.
\end{Def}

\begin{Def}[RCC loop folder]
	A loop folder $(G,H,T)$ is called right conjugacy closed (RCC) if the transversal $T$ is $G$-invariant under conjugation, i.e. $g^{-1}tg \in T$ for all $g \in G, t \in T$. 
\end{Def}

The property of being right conjugacy closed is preserved by our previous constructions as we see in the next lemma.

\begin{Lemma}
	Let $(G,H,T)$ be an RCC loop folder. Then the loop defined on $T$ in Lemma \ref{LoopFromLoopFolder} is an RCC loop. Further, if $\LL$ is an RCC loop, then the envelop of $\LL$ is an RCC loop folder.
\end{Lemma}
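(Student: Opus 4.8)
The statement has two directions, and the plan is to treat them separately.

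\textbf{First direction: the loop is RCC.} The plan for showing that $(T,\ast)$ is RCC is to reinterpret each right multiplication $R_x$ as the permutation of the right cosets $H\backslash G$ induced by right multiplication by $x \in G$. Under the bijection $t \leftrightarrow Ht$ between $T$ and $H\backslash G$, the definition of $\ast$ in Lemma~\ref{LoopFromLoopFolder} says precisely that $R_x$ sends the coset $Ht$ to $Htx$, since $t\ast x$ is the unique element of $T$ lying in $Htx$. First I would fix this identification and record that, for $z \in T$, the permutation $R_z$ of $T$ is exactly ``$Ht \mapsto Htz$''. Next I would compute $R_x^{-1}R_yR_x$ on cosets. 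Recalling that maps act on the right and are composed from left to right, this gives
\begin{align*}
Ht \mapsto Htx^{-1} \mapsto Htx^{-1}y \mapsto Htx^{-1}yx = Ht\,y^{x},
\end{align*}
so $R_x^{-1}R_yR_x$ is right multiplication by $y^{x}=x^{-1}yx$. Since $(G,H,T)$ is RCC and $x,y \in T \subseteq G$, the element $y^{x}$ again lies in $T$; hence $R_x^{-1}R_yR_x$ and $R_{y^{x}}$ send every $t$ to the same (unique) element of $T$ in the coset $Ht\,y^{x}$, so $R_x^{-1}R_yR_x = R_{y^{x}} \in R_T$ and $(T,\ast)$ is RCC.

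\textbf{Second direction: the envelope is an RCC loop folder.} Here I would invoke Lemma~\ref{Lemma envelop faithful}, which already gives that the envelope $(G,H,T):=(\RM(\LL),\Stab_{\RM(\LL)}(1_{\LL}),R_{\LL})$ is a loop folder; it remains only to check that $T=R_{\LL}$ is invariant under conjugation by $G=\RM(\LL)$. I would consider the setwise stabiliser $N:=\{\,g \in G : g^{-1}R_{\LL}g = R_{\LL}\,\}$, which is a subgroup of $G$. The RCC hypothesis reads $R_x^{-1}R_yR_x \in R_{\LL}$ for all $x,y$, i.e. $R_x^{-1}R_{\LL}R_x \subseteq R_{\LL}$; since conjugation by $R_x$ is injective and $R_{\LL}$ is finite, this inclusion is an equality, so $R_x \in N$ for every $x \in \LL$. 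As $R_{\LL}$ generates $G$ and $N$ is a subgroup containing $R_{\LL}$, it follows that $N=G$, whence $g^{-1}R_yg \in R_{\LL}$ for all $g \in G$ and $R_y \in R_{\LL}$. Therefore the envelope is an RCC loop folder.

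\textbf{Expected main obstacle.} The two directions are essentially the same fact read in opposite ways, so the only delicate points are bookkeeping. I expect the main thing to get right to be the composition convention: because the loop acts on the right and products of $R$'s are read left to right, the conjugate that appears is $y^{x}=x^{-1}yx$ and not $xyx^{-1}$, and matching this with the paper's convention $t^{g}=g^{-1}tg$ in the definition of an RCC loop folder is what makes the RCC property apply directly. The second, milder, point is the upgrade of the inclusion $R_x^{-1}R_{\LL}R_x \subseteq R_{\LL}$ to an equality, which is exactly where finiteness of $\LL$ enters.
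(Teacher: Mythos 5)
Your proof is correct, and the first direction is essentially the paper's argument in different clothing: identifying $R_x$ with the permutation $Ht\mapsto Htx$ of $H\backslash G$ is exactly the content of Lemma \ref{HelpForThis} (the case $n=2$ gives $H(t*x)=Htx$), and your computation $Htx^{-1}yx=Ht\,y^{x}$ is the paper's identity $Hyts=Hysx$ with $x=s^{-1}ts$ read on cosets. For the second direction the paper only says it ``follows directly from the definition''; your argument --- each $R_x$ setwise stabilises the finite set $R_{\LL}$ because the inclusion $R_x^{-1}R_{\LL}R_x\subseteq R_{\LL}$ forced by the RCC definition is an equality by finiteness, and the setwise stabiliser is a subgroup containing the generating set $R_{\LL}$, hence all of $\RM(\LL)$ --- is precisely the detail needed to upgrade conjugation-closure under $R_{\LL}$ to invariance under the whole right multiplication group, and is worth recording explicitly.
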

\begin{proof}
	In Lemma \ref{LoopFromLoopFolder} we showed that $(T,*)$ is a loop. Hence, we only have to show that the set $R_{T}$ is closed under conjugation, i.e. for all $s,t \in T$ we have
	$R_s^{-1}R_t R_s \in R_{T}$. Let $s,t,y \in T$. Since $T^g=T$ for all $g \in G$, we have $x:=s^{-1}ts \in T$. It follows from $Hyts=Hysx$ and Lemma \ref{HelpForThis} that
	\begin{align*}
		y(R_tR_s)=(y*t)*s=(y*s)*x=y(R_sR_x)
	\end{align*}
	As $y$ is arbitrary, we conclude $R_s^{-1}R_t R_s=R_x \in R_T$. The second statement follows directly from the definition.
\end{proof}

If $G$ is a group, $H \leq G$ and $G' \cap H = \{1\}$, then there exists a $G$-invariant transversal for $H \backslash G$. 

\begin{Lemma}
	\label{GStrichSchnitH}
	Let $G$ be a group and let $H \leq G$ be a subgroup of $G$ with $H \cap G' = \{1\}$. Furthermore, let $S \subseteq G$ with $1 \in S$ be a transversal for $HG'$ in $G$. Set $T:=G'S$. Then $(G,H,T)$ is an RCC loop folder.
\end{Lemma}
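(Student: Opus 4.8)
The plan is to verify the three defining conditions of an RCC loop folder for $(G,H,T)$: that $1_G \in T$, that $T$ is a transversal for $H^g \backslash G$ for every $g \in G$, and that $T$ is $G$-invariant under conjugation. Throughout I would exploit that $G'$ is normal in $G$, so that $HG'$ is a subgroup, the quotient $\bar G := G/G'$ is abelian, and $H^g \cap G' = (H \cap G')^g = \{1_G\}$ for every $g$. Let $\pi\colon G \to \bar G$ denote the canonical projection. The starting observation is that $T = G'S = \bigcup_{s \in S} G's$ is a union of cosets of $G'$, equivalently $T = \pi^{-1}(\pi(S))$; moreover, since $S$ is a transversal for $HG'$ and $\pi(HG') = \pi(H)$, the image $\pi(S)$ is a transversal for $\pi(H)$ in $\bar G$.

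The invariance condition is the quickest to settle, so I would do it first. Since $1_G \in G' \cap S$, we have $1_G \in T$. For invariance, take $t \in T$ and $g \in G$; because $\bar G$ is abelian, $\pi(g^{-1}tg) = \pi(t) \in \pi(S)$, so $g^{-1}tg \in \pi^{-1}(\pi(S)) = T$. Hence $T^g \subseteq T$, and since conjugation is a bijection and $G$ is finite, $T^g = T$.

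Next I would pin down the cardinality, since the transversal property will then follow from an injectivity statement together with a count. I would check that the multiplication map $G' \times S \to T$ is injective: if $g_1's_1 = g_2's_2$, then $s_2s_1^{-1} \in G' \subseteq HG'$ forces $s_1 = s_2$ and then $g_1' = g_2'$; hence $|T| = |G'|\,[G:HG']$. On the other hand $[G:H] = [G:HG']\,[HG':H]$ with $[HG':H] = |G'|/|H \cap G'| = |G'|$ because $H \cap G' = \{1_G\}$, so $|T| = [G:H] = [G:H^g]$ for every $g$, which is exactly the number of cosets to be represented.

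Finally, for the transversal property it suffices, given the cardinality, to show that distinct elements of $T$ lie in distinct right cosets of $H^g$. Suppose $t_1, t_2 \in T$ with $t_1t_2^{-1} \in H^g$, and write $t_i = g_i's_i$. Projecting and using that $\bar G$ is abelian gives $\pi(s_1)\pi(s_2)^{-1} = \pi(t_1t_2^{-1}) \in \pi(H^g) = \pi(H)$; since $\pi(S)$ is a transversal for $\pi(H)$, this yields $\pi(s_1) = \pi(s_2)$, whence $s_1 = s_2$ as above. Then $t_1t_2^{-1} = g_1'(g_2')^{-1} \in H^g \cap G' = \{1_G\}$, so $t_1 = t_2$. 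I expect the main (though still modest) obstacle to be bookkeeping the passage between $G$ and $\bar G$ consistently; the two facts that do all the work are the normality of $G'$, which makes $H^g \cap G'$ trivial, and the abelianness of $\bar G$, which makes conjugation disappear after projection.
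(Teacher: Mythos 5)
Your proof is correct and follows essentially the same route as the paper's: both rest on $T$ being a union of $G'$-cosets indexed by a transversal of $HG'$, with conjugation preserving each coset $G's$. You phrase this via the projection to $G/G'$ where the paper uses the commutator identity $g^{-1}tg=([g,t^{-1}]x)s$, and you are more explicit on two points the paper leaves implicit --- the injectivity of $G'\times S\to T$ behind the count $|T|=|G'|\,|S|$, and the verification that $T$ is a transversal for every conjugate $H^g$ rather than only for $H$ itself.
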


\begin{proof}
	Let $g \in G$. Then there exists $s \in S, x \in G'$ and $h \in H$ such that $g=(hx)s=h(xs) \in Hxs$ and $xs \in T$. Moreover, we have 
	\begin{align*}
	\left| T\right| =  \left| G' \right|\left| S \right|= \left| G' \right|\left| G:HG' \right| =\left| G : H \right| 
	\end{align*} and thus, $T$ is a transversal for $H \backslash G$.
	Now let $t = xs \in T$ with $x \in G'$ and $s \in S$. Then we have 
	\begin{align*}
		g^{-1}tg = g^{-1}tgt^{-1}t= ([g,t^{-1}]x)s \in G's \subseteq T
	\end{align*}
	for all $g \in G$. Hence, $T$ is $G$-invariant and the statement follows.
\end{proof}

There might be $G$-invariant transversals which are not of the form $G'S$, where $S$ is a right transversal of $HG'$ in $G$, but we see in Chapter 3 that if $G$ is a Frobenius group with an abelian Frobenius complement, every $G$-invariant transversal is of the form $G'S$.

Finally, we give a nice property of RCC loop folders, which we will use in Chapter 6.

\begin{Lemma}
	\label{NormalizerCentralizer}
	 If $(G,H,T)$ is an RCC loop folder, then 
	 $N_G(H)=H C_G(H)$.
\end{Lemma}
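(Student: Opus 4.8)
The plan is to prove the two inclusions $H C_G(H) \subseteq N_G(H)$ and $N_G(H) \subseteq H C_G(H)$ separately. The first inclusion holds for any group $G$ and any subgroup $H$, independently of the loop folder structure: every element of $H$ normalises $H$, and every element of $C_G(H)$ centralises and hence normalises $H$, so any product of such elements lies in $N_G(H)$. The content of the statement is therefore the reverse inclusion, and this is the only place where the $G$-invariance of $T$ is used.

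For the reverse inclusion, I would start with an arbitrary $n \in N_G(H)$ and use the fact that $T$ is a transversal for the right cosets of $H$ in $G$ to write $n = ht$ with $h \in H$ and $t \in T$ uniquely determined. Since $h^{-1} \in H \subseteq N_G(H)$ and $n \in N_G(H)$, it follows that $t = h^{-1} n \in N_G(H)$ as well. The goal then reduces to showing that this particular $t$ actually centralises $H$, for then $n = ht \in H C_G(H)$.

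To show $t \in C_G(H)$, I would fix an arbitrary $x \in H$ and consider the conjugate $x^{-1} t x$. Because $T$ is $G$-invariant and $x \in G$, we have $x^{-1} t x \in T^x = T$. On the other hand, $x^{-1} t x$ lies in the same right coset of $H$ as $t$: using $x^{-1} \in H$ (so $H x^{-1} = H$), the hypothesis $t \in N_G(H)$ (so $H t = t H$), and $x \in H$ (so $H x = H$), one computes $H x^{-1} t x = H t x = t H x = t H = H t$. Thus $t$ and $x^{-1} t x$ are two elements of $T$ lying in the coset $Ht$, and since $T$ contains exactly one representative of each right coset of $H$, we conclude $x^{-1} t x = t$. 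As $x \in H$ was arbitrary, $t$ commutes with every element of $H$, that is, $t \in C_G(H)$, which finishes the reverse inclusion and the proof.

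The argument is short, and the point to get right is the coset computation $H x^{-1} t x = H t$, where both the hypothesis $t \in N_G(H)$ and the fact that $x \in H$ are used to absorb the conjugating factors; I do not expect a deeper obstacle. The essential idea is that $G$-invariance of $T$ keeps conjugates of transversal elements inside $T$, so that the transversal property forces two such elements to coincide as soon as they fall into the same coset of $H$.
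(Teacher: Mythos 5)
Your proof is correct and follows essentially the same route as the paper: decompose $n = ht$ with $t \in T$, deduce $t \in N_G(H)$, and then use the $G$-invariance of $T$ together with the transversal property to force each $H$-conjugate of $t$ to equal $t$. The only cosmetic difference is that you phrase the coset computation as $Hx^{-1}tx = Ht$ while the paper notes $h'th'^{-1}t^{-1} \in H$; both express the same fact.
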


\begin{proof}
	Clearly, we have $H C_G(H) \leq N_G(H)$. Conversely, let $g \in N_G(H)$. Then there exist $t \in T$ and $h \in H$ such that $g=ht$. As $h \in N_G(H)$, we obtain $t \in N_G(H)$. Since $T$ is $G$-invariant, we have $h'th'^{-1} \in T$ for all $h' \in H$ and it follows with $h'th'^{-1}t^{-1} \in H$ that $t=h'th'^{-1}$ for all $h' \in H$. Thus, $t \in C_G(H)$ and we can conclude that $g=ht \in HC_G(H)$.
\end{proof}

\newpage

\section{Frobenius groups}
\lhead{\slshape 3 \quad Frobenius groups}

First, we look at Frobenius groups in general. We see that for a Frobenius group $G$ and a subgroup $H \leq G$ exists a $G$-invariant transversal for $H\backslash G$ with a special form if the Frobenius complement is abelian and $H$ lies in this complement. 
Then we examine the affine groups of dimension 1 and show that theses groups are Frobenius groups with abelian complements and finally, we give another generic example of those Frobenius groups which are not affine groups.

\subsection{The general case}

Here we define Frobenius groups and give some properties of these groups in the next theorem and next lemma.

\begin{Def}[{\cite[Definition (7.1)]{isaacs}}]
	Let $C \leq G$, with $1 \lneq C \lneq G$. Assume that $C \cap C^g= \{1\}$ whenever $g \in G-C$. Then $C$ is a Frobenius complement in $G$. A group which contains a Frobenius complement is called a Frobenius group.
\end{Def}

\begin{Theorem}[{\cite[Theorem (7.2) and Lemma (7.3)]{isaacs}}]
	\label{HauptsatzFrobenius}
	Let $G$ be a Frobenius group with Frobenius complement $C$ and put
	\begin{align*}
	 N:=G - \bigcup_{g \in G} (C - \{1\}) ^g.
	\end{align*}
	 Then $N$ is a normal subgroup of $G$ and $G$ is a semidirect product of $N$ and $C$. The normal subgroup $N$ is uniquely determined by $C$ and is called the Frobenius kernel of $G$.
\end{Theorem}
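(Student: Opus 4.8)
The plan is to reduce the theorem to the single claim that $N$ is a subgroup, and then to establish that claim by character theory. First I would record the elementary facts. Since $C$ is a Frobenius complement, $C \cap C^g = \{1_G\}$ for $g \in G - C$; in particular $N_G(C) = C$, because $g \in N_G(C)$ gives $C = C^g$ and hence $g \in C$. Thus $C$ has exactly $|G:C|$ distinct conjugates, any two meeting only in $1_G$. Counting the non-identity elements lying in conjugates of $C$ gives $|G:C|\,(|C|-1) = |G| - |G:C|$ such elements, so that
\begin{align*}
|N| = |G| - \bigl(|G| - |G:C|\bigr) = |G:C|.
\end{align*}
Moreover $N \cap C = \{1_G\}$ directly from the definition of $N$. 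Hence, once $N$ is known to be a subgroup, $|N|\,|C| = |G|$ together with $N \cap C = \{1_G\}$ force $G = NC$ with $N \trianglelefteq G$, i.e. $G = N \rtimes C$; and uniqueness is immediate because $N$ is defined purely in terms of $C$ and conjugation. The substance of the theorem is therefore the one assertion that $N$ is a subgroup.

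This is exactly the hard part, and the reason the statement is Frobenius's theorem: no proof avoiding character theory is known. I would exhibit $N$ as an intersection of kernels of ordinary characters of $G$. The engine is an isometry: if $\alpha,\beta$ are class functions on $C$ with $\alpha(1_G) = \beta(1_G) = 0$, then $\langle \alpha^G, \beta^G\rangle_G = \langle\alpha,\beta\rangle_C$, where $\alpha^G$ denotes induction to $G$. To see this one applies Frobenius reciprocity to reduce to computing the restriction $(\beta^G)\big|_C$, then evaluates the induced-character formula: for $g \in C - \{1_G\}$ the only $x \in G$ with $xgx^{-1} \in C$ lie in $C$ (otherwise $g \in C \cap C^{x}$ with $g \neq 1_G$ would force $x \in N_G(C) = C$), so $(\beta^G)(g) = \beta(g)$, while $(\beta^G)(1_G) = |G:C|\,\beta(1_G) = 0$; hence $(\beta^G)\big|_C = \beta$. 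This is the one step where the defining property of a Frobenius complement is genuinely consumed.

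With this in hand I would, for each character $\theta$ of $C$, put $\alpha := \theta - \theta(1_G)\mathbf{1}_C$ and define
\begin{align*}
\theta^{\ast} := \alpha^G + \theta(1_G)\,\mathbf{1}_G.
\end{align*}
A short computation using the previous paragraph shows $\theta^{\ast}(1_G) = \theta(1_G)$, that $\theta^{\ast}$ restricts to $\theta$ on $C$, and that $\theta^{\ast}$ is constant equal to $\theta(1_G)$ on all of $N$, since $\alpha^G$ vanishes on every element not conjugate into $C$. When $\theta$ is irreducible, the isometry together with Frobenius reciprocity yields $\langle\theta^{\ast},\theta^{\ast}\rangle_G = 1$, and as $\theta^{\ast}(1_G) = \theta(1_G) > 0$ this virtual character is a genuine irreducible character of $G$. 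Each such $\theta^{\ast}$ satisfies $N \subseteq \ker\theta^{\ast}$, because $\theta^{\ast}$ takes its degree value throughout $N$; conversely, if $g \notin N$ then $g$ is conjugate into $C - \{1_G\}$, and since the irreducible characters of $C$ separate $1_G$ from the remaining elements of $C$, some $\theta^{\ast}$ does not contain $g$ in its kernel. Therefore
\begin{align*}
N = \bigcap_{\theta} \ker\theta^{\ast},
\end{align*}
the intersection taken over the irreducible characters $\theta$ of $C$, which displays $N$ as a normal subgroup of $G$ and completes the argument. I expect the isometry computation and the verification $\langle\theta^{\ast},\theta^{\ast}\rangle_G = 1$ to be the only delicate points; everything else is bookkeeping, and in a thesis at this level one could legitimately cite the cited source of Isaacs for this character-theoretic core.
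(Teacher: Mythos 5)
The paper does not prove this statement at all: it is quoted verbatim from Isaacs \cite{isaacs} (Theorem (7.2) and Lemma (7.3)) and used as a black box. Your sketch is the standard character-theoretic proof from that very source --- the isometry $\langle\alpha^G,\beta^G\rangle_G=\langle\alpha,\beta\rangle_C$ for class functions vanishing at $1$, the construction $\theta^\ast=\alpha^G+\theta(1)\mathbf{1}_G$, and the identification $N=\bigcap_\theta\ker\theta^\ast$ --- and all the steps you outline (the counting argument giving $|N|=|G:C|$, the norm computation $\langle\theta^\ast,\theta^\ast\rangle_G=1$, the separation of $g\notin N$ by some $\theta^\ast$) are correct, so your proposal is a faithful and complete account of the proof the paper implicitly relies on.
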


\begin{Lemma}[{\cite[Problem (7.1)]{isaacs}}]
	\label{Isaacs}
 Let $N \trianglelefteq G$ and $ \{1 \} \lneq C \lneq G$ with $G=NC$ and $N \cap C = \{1\}$. Then the following statements are equivalent: 
\begin{enumerate}[a)]
 	\item $C_G(n) \leq N$ for all $1 \neq n \in N$;
 	\item $C_C(n)=\{1\}$ for all $1 \neq n \in N$;
 	\item $C_G(c) \leq C$ for all $1 \neq c \in C$;
 	\item Every $x \in G - N$ is conjugate to an element of $C$;
 	\item If $1 \neq c \in C$, then $c$ is conjugate to every element of $Nc$.
 	\item $C$ is a Frobenius complement in $G$.
 \end{enumerate}
\end{Lemma}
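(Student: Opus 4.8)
The plan is to prove the six conditions equivalent by a single cycle of implications, resting on the basic count $|G| = |N|\,|C|$ (which follows from $G = NC$ and $N \cap C = \{1\}$), so that the cosets of $N$ are exactly the sets $Nc$ with $c \in C$, each of size $|N|$. Several links are immediate and I would dispatch them first. Condition a) $\Rightarrow$ b) since $C_C(n) = C_G(n) \cap C \leq N \cap C = \{1\}$; condition c) $\Rightarrow$ b) since a nontrivial $c \in C_C(n)$ would force $n \in C_G(c) \leq C$, hence $n \in N \cap C = \{1\}$; condition f) $\Rightarrow$ c) since $g \in C_G(c)$ with $1 \neq c \in C$ puts $c$ into $C \cap C^{g^{-1}}$, which is trivial unless $g \in C$; and e) $\Rightarrow$ d) since any $x \in G - N$ lies in some coset $Nc$ with $c \neq 1$ and is therefore conjugate to $c \in C$.

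The technical heart is a counting observation that I would isolate next. Assuming b), for every $1 \neq c \in C$ one has $C_N(c) = \{1\}$, because a nontrivial $m \in C_N(c)$ would give $c \in C_C(m) = \{1\}$; consequently the $N$-conjugates $c^m$, $m \in N$, number $|N : C_N(c)| = |N|$, and since each $c^m$ lies in $Nc$ while $|Nc| = |N|$, the $N$-conjugacy class of $c$ fills the entire coset $Nc$. This yields b) $\Rightarrow$ e) at once, and also b) $\Rightarrow$ a): if some $g \in G - N$ centralised a nontrivial $n \in N$, then writing $g$ in its coset $Nc$ (with $c \neq 1$) the observation gives $g = c^{m}$ for some $m \in N$, and conjugating by $m^{-1}$ shows that $c$ centralises the nontrivial element $n^{m^{-1}} \in N$, contradicting b). Thus a) $\Leftrightarrow$ b).

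To close the cycle I would prove d) $\Rightarrow$ f) by a global count, which I expect to be the \emph{main obstacle}. Under d) every element of $G - N$ is conjugate to a nontrivial element of $C$, and conversely every conjugate of a nontrivial element of $C$ avoids the normal subgroup $N$; hence $G - N = \bigcup_{g \in G} (C - \{1\})^g$, a set of size $|N|(|C| - 1)$. On the other hand this union ranges over the $|G : N_G(C)|$ distinct conjugates of $C$, each contributing at most $|C| - 1$ nonidentity elements, so its cardinality is at most $|G : N_G(C)|\,(|C| - 1) \leq |G : C|\,(|C| - 1) = |N|\,(|C| - 1)$. Equality must therefore hold throughout, which forces $N_G(C) = C$ and forces the nonidentity parts of distinct conjugates to be pairwise disjoint; together these say precisely that $C \cap C^g = \{1\}$ whenever $g \notin C$, i.e. f). Combining all the links gives the cycle a) $\Leftrightarrow$ b) $\Rightarrow$ e) $\Rightarrow$ d) $\Rightarrow$ f) $\Rightarrow$ c) $\Rightarrow$ b), so that all six conditions are equivalent. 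The delicate points to verify carefully are the two inclusions in $G - N = \bigcup_{g \in G} (C - \{1\})^g$ (normality of $N$ is what makes the reverse inclusion work) and the assertion that equality in the union bound genuinely yields disjointness, since it is exactly this disjointness that translates back into the Frobenius condition.
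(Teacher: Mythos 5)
Your proof is correct, but it is organized quite differently from the paper's, which runs a single cycle a) $\Rightarrow$ b) $\Rightarrow$ c) $\Rightarrow$ d) $\Rightarrow$ e) $\Rightarrow$ f) $\Rightarrow$ a). Your scheme a) $\Leftrightarrow$ b) $\Rightarrow$ e) $\Rightarrow$ d) $\Rightarrow$ f) $\Rightarrow$ c) $\Rightarrow$ b) isolates one counting lemma (under b), $C_N(c)=\{1\}$ for $1 \neq c \in C$, so the $N$-class of $c$ has size $|N|$ and fills the coset $Nc$) and reuses it for both b) $\Rightarrow$ e) and b) $\Rightarrow$ a); the paper obtains the corresponding fact inside c) $\Rightarrow$ d) by comparing $\left|C_c^G\right|$ with $\left|N\right|\left|C_c^C\right|$ and showing that two nontrivial elements of $C$ are $G$-conjugate iff they are $C$-conjugate. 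The global count in your d) $\Rightarrow$ f) is identical to the paper's e) $\Rightarrow$ f). The most significant difference is how you exit condition f): your one-line f) $\Rightarrow$ c) (if $g \in C_G(c)$ with $1 \neq c \in C$, then $c \in C \cap C^{g^{-1}}$, which forces $g \in C$) is completely elementary, whereas the paper's f) $\Rightarrow$ a) invokes Theorem \ref{HauptsatzFrobenius} to identify the given $N$ with the Frobenius kernel $G - \bigcup_{g \in G}(C - \{1\})^g$ --- a genuinely deep external input (Frobenius' theorem). Your version is therefore more self-contained. The two points you flag as delicate --- that normality of $N$ gives $\bigcup_{g \in G}(C-\{1\})^g \subseteq G - N$, and that equality in the union bound forces both $N_G(C)=C$ and pairwise disjointness of the sets $C_i - \{1\}$ --- do hold and are exactly the points the paper also spells out in its e) $\Rightarrow$ f) step.
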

\begin{proof}
\textbf{a) $\Rightarrow $ b):} Let $1\neq n \in N$ and let $c \in C_C(n)$. Then $cn=nc$ and thus, $c \in C_G(n) \leq N$. It follows that $c \in N \cap C = \{1\}$ and hence, statement b) follows.

\textbf{b) $\Rightarrow $ c):}
Let $1\neq c \in C$ and let $g \in C_G(c)$. We have $g=nh$ for some $n \in N, h \in C$ and we have $c=c^g=c^{nh}$. It follows that $n^{-1}cn=hch^{-1} \in C$. Thus, $c^{-1}n^{-1}cn \in C$ and since $N \trianglelefteq G$, we have $c^{-1}n^{-1}cn \in N$. 
This implies that $cn=nc$ and it follows that $c \in C_C(n)$. As  $c \neq 1$, we have $n=1$ by b), and we conclude $g=h \in C$.

\textbf{c) $\Rightarrow $ d):}
Let $1 \neq c \in C$. We denote the conjugacy class of $G$ containing $c$ by $C^G_c$ and the conjugacy class of $C$ containing $c$ by $C^C_c$. By our assumption we have $C_G(c)=C_C(c)$. Thus, we have
\begin{align*}
\left|C_c^G\right|=\left|G:C_G(c)\right|=\left|N\right|\left|C:C_C(c)\right|=\left|N\right|\left|C_c^C\right|.
\end{align*}
Moreover, we have $C_c^G \subseteq G - N$, since  $c \notin N $ and $N \trianglelefteq G$.

Let $1 \neq c_1,c_2 \in C$ and suppose that $g^{-1}c_1g=c_2$ for some $g \in G$. Then $g=nc'$ for some $n \in N$ and $c' \in C$. It follows that $c'^{-1}n^{-1}c_1nc'=c_2$ and this implies with $N \trianglelefteq G$ that
\begin{align*}
	c_1^{-1}n^{-1}c_1n=c_1^{-1}c'c_2c'^{-1} \in N \cap C = \{1\}
\end{align*}
and hence, we have $c'^{-1}c_1c'=c_2$. 
We conclude that two elements of $C - \{1\}$ are conjugated in $G$ if and only if there are conjugated in $C$.

Now it follows that 
\begin{align*}
 \left|\bigcup_{1 \neq c \in C} C_c^G\right|=\left|N\right|\left|\bigcup_{1 \neq c \in C} C_c^C\right|=\left|N \right| (\left|C -1  \right|) = \left|G \right|- \left|N \right|.
\end{align*}
We conclude that $G - N = \bigcup_{1 \neq c \in C} C_c^G $ and thus, if $x \in G-N $, then $x \in C_c^G$ for some $1 \neq c \in C$.

\textbf{d) $\Rightarrow $ e):}
Let $1 \neq c \in C$ and let $n \in N$. Then we have $nc \in G- N$ and thus, $nc$ is conjugate to some $h \in C$ by d). This implies that there exists $g=mc' \in G$ with $m \in N, c' \in C$ such that $(nc)^g=h$. Since $N \trianglelefteq G$, we have
\begin{align*}
	(Nc)^g&=g^{-1}Ncg=c'^{-1}m^{-1}Ncmc'=c'^{-1}Ncmc^{-1}cc'\\
	&=c'^{-1}Ncc'=Nc'^{-1}cc'=Nc^{c'}.
\end{align*}
 Hence, there exists $n' \in N$ such that $h=(nc)^g=n'c^{c'}$. This implies $n'=h(c^{c'})^{-1} \in N \cap C = \{1\}$. Therefore, $h$ is conjugate to $c$ and thus, $c$ is conjugate to $nc$, which proves the statement, since $n$ was chosen arbitrarily.

\textbf{e) $\Rightarrow $ f):}
Let $g \in G - N$ with $g=cn$ for some $1 \neq c \in C$ and $n \in N$. By our assumption $c$ is conjugate to $g=cn$. Since $g$ was chosen arbitrarily, we conclude
\begin{align*}
	G - N \subseteq \bigcup_{g \in G} (C^g - \{1\}) = \bigcup_{i=1}^t (C_i - \{1\})
\end{align*}
with $\{C^g \mid g \in G \} = \{ C_1, \dotsc , C_t \}$ and $t=|G:N_G(C)|$. It follows that
\begin{align*}
\left|G - N\right| &\leq \displaystyle\sum_{i=1}^{t}(\left|C_i\right|-1)=t(\left|C\right|-1) \leq \left|G:C\right|(\left|C\right|-1)\\
&= \left|N\right|(\left|C\right|-1)=\left|C\right|\left|N\right|-\left|N\right|
=\left|G - N\right|.
\end{align*}
Thus, we have equality and this yields that $C_i \cap C_j = \{1\}$ for all $1 \leq i \neq j \leq t$. Hence, $C$ is a Frobenius complement in $G$.

\textbf{f) $\Rightarrow $ a):}
First, we prove that $C \cap C_G(n) =\{1\}$ for every $1 \neq n \in N$. Let  $1 \neq n \in N$ and let $h \in C \cap C_G(n)$. Then we have $h=h^n \in C^n$. As $1 \neq n \in G - C$, it follows that $h \in C \cap C^n = \{1\}$. Hence, the statement follows.

Let $1\neq x \in N$ and let $y \in C_G(x)$. Assume that $y \notin N$. Since $G$ is a Frobenius group, it follows from Theorem \ref{HauptsatzFrobenius} that $N=G - \displaystyle\bigcup_{g \in G} (C - \{1\}) ^g$. Thus, there exists $z \in G$ such that $y \in C^z$. Let $c \in C$ such that $y=c^z$.
It follows that
\begin{align*}
x=y^{-1}xy=(c^z)^{-1}xc^z=z^{-1}c^{-1}x^{z^{-1}}cz.
\end{align*}
Hence, $x^{z^{-1}}=c^{-1}x^{z^{-1}}c$ and thus, $c \in C_G(x^{z^{-1}})\cap C$. As $1 \neq x^{z^{-1}} \in N$, it follows from the above statement that $c=1$. But then we have $y=c^z=1$ and this is a contradiction to $y \notin N$. Therefore, we have  $y \in N$.
\end{proof}

Now we examine the properties of Frobenius groups with abelian Frobenius complement in detail.

\begin{Lemma}
	\label{abliancomplement}
	Let $G$ be a Frobenius group with Frobenius complement $C$ and Frobenius kernel $N$. Then $C$ is abelian if and only if $N=G'$.
\end{Lemma}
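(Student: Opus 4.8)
The plan is to reduce the claimed equivalence to the single nontrivial inclusion $N \le G'$, which in fact holds for \emph{every} Frobenius group, regardless of whether the complement is abelian. First I would record the structural input: by Theorem \ref{HauptsatzFrobenius} we have $G = N \rtimes C$ with $N \trianglelefteq G$ and $N \cap C = \{1\}$, so the quotient $G/N$ is isomorphic to $C$. Hence $G/N$ is abelian if and only if $C$ is abelian, and $G/N$ being abelian is equivalent to $G' \le N$. Thus, once $N \le G'$ is established, both directions of the theorem drop out: if $C$ is abelian then $G' \le N$, whence $G' = N$; and conversely if $N = G'$ then $C \cong G/G'$ is abelian. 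Everything except $N \le G'$ is just bookkeeping about the split extension.

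The heart of the argument is therefore the inclusion $N \le G'$, and I would obtain it directly from Lemma \ref{Isaacs}. Fix an element $1 \ne c \in C$, which exists because $1 \lneq C$. By part (e) of Lemma \ref{Isaacs}, every element of the coset $Nc$ is conjugate to $c$; so for each $n \in N$ there is $g \in G$ with $g^{-1} c g = nc$, and therefore $n = g^{-1} c g c^{-1} = [g, c^{-1}] \in G'$. Letting $n$ range over $N$ shows $N \subseteq G'$, and since $N$ is a subgroup this gives $N \le G'$. A more self-contained variant avoids part (e): by part (c) of Lemma \ref{Isaacs} we have $C_N(c) = C_G(c) \cap N \le C \cap N = \{1\}$, so conjugation by $c$ is a fixed-point-free automorphism of $N$; the standard fact that the map $n \mapsto n^{-1} n^{c} = [n,c]$ is then a bijection of the finite group $N$ forces every element of $N$ to be a commutator $[n,c] \in G'$.

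The only genuine obstacle is this inclusion $N \le G'$; the remaining equivalences are immediate from $G/N \cong C$. If I use the fixed-point-free route, the one technical point to verify is injectivity of $n \mapsto [n,c]$, which is the routine one-line computation: from $n^{-1} n^{c} = m^{-1} m^{c}$ one deduces $mn^{-1} = (mn^{-1})^{c}$, so $mn^{-1} \in C_N(c) = \{1\}$ and hence $m = n$; surjectivity then follows from finiteness. Either way the proof is short, and I would favour citing Lemma \ref{Isaacs}(e) directly for brevity while noting the fixed-point-free alternative as the conceptual reason behind it.
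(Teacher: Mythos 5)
Your proof is correct and follows essentially the same route as the paper: both reduce the equivalence to $G/N \cong C$ plus the inclusion $N \leq G'$, and both obtain that inclusion from Lemma \ref{Isaacs}(e) via the identity $n = g^{-1}cgc^{-1} = [g,c^{-1}]$. Your additional observations --- that $N \leq G'$ holds for every Frobenius group independently of the abelian hypothesis, and the alternative fixed-point-free argument --- are valid but do not change the substance of the argument.
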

\begin{proof}
	First, let $C$ be abelian. Then $G/N \cong C$ is abelian and thus, $G' \leq N$. Now let $n \in N$ and $1 \neq c \in C$. Then with Lemma \ref{Isaacs} e), it follows that there exists $g \in G$ with $nc=g^{-1}cg$. Hence, $n=[g,c^{-1}] \in G'$ and we conclude that $N=G'$.\\
	Conversely, if $N=G'$, we have that $C \cong G / N =G / G'$ is abelian. 
\end{proof}

\begin{Lemma}
	\label{FrobeniusConjugacyClass}
		Let $G=C \ltimes N$ be a Frobenius group with abelian Frobenius complement $C$ and Frobenius kernel $N$. 
		Then the conjugacy class of $G$ containing $\tau$ is equal to $N\tau$ for all $\tau \in G - N$.
\end{Lemma}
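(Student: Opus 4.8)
The plan is to prove the two inclusions $\tau^G \subseteq N\tau$ and $N\tau \subseteq \tau^G$ separately, where I write $\tau^G$ for the conjugacy class of $\tau$ in $G$.

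For the inclusion $\tau^G \subseteq N\tau$ I would first invoke Lemma \ref{abliancomplement}: since $C$ is abelian, $N = G'$. Then for an arbitrary $g \in G$ I consider $g^{-1}\tau g$ and note that its ``commutator part'' $g^{-1}\tau g\tau^{-1}$ maps to the identity in the abelianization $G/G'$, hence lies in $G' = N$. Therefore $g^{-1}\tau g = (g^{-1}\tau g\tau^{-1})\tau \in N\tau$, which gives $\tau^G \subseteq N\tau$. This direction is routine and holds as soon as $N = G'$; the abelian hypothesis enters only here.

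For the reverse inclusion $N\tau \subseteq \tau^G$, which carries the real content, I would exploit the decomposition $G = C \ltimes N$ and write $\tau = nc$ with $n \in N$ and $c \in C$. Since $\tau \notin N$, the complement component satisfies $c \neq 1$, and $N\tau = Nnc = Nc$. Now I apply Lemma \ref{Isaacs}: as $C$ is a Frobenius complement, statement e) holds, so the nontrivial element $c$ is conjugate in $G$ to every element of $Nc$. In particular $\tau \in Nc$ is conjugate to $c$, so $\tau^G = c^G$, and every element of $Nc = N\tau$ lies in $c^G = \tau^G$. Combining the two inclusions yields $\tau^G = N\tau$.

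I expect the genuine obstacle to be essentially already dispatched by Lemma \ref{Isaacs} e); the work on my side is only to phrase the decomposition $\tau = nc$ and the identity $N\tau = Nc$ cleanly enough that e) applies verbatim. As a consistency check on why abelianness of $C$ cannot be dropped, one can count cardinalities: $|c^G| = |G : C_G(c)|$, and for abelian $C$ one has $C_G(c) = C$ by Lemma \ref{Isaacs} c), so $|c^G| = |G:C| = |N| = |Nc|$, in agreement with the two computed sides; for a non-abelian complement the class would be strictly larger than $Nc$, so the statement genuinely relies on $C$ being abelian.
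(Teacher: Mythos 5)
Your argument is correct and follows essentially the same route as the paper: the inclusion $\tau^G \subseteq N\tau$ via $N = G'$ (Lemma \ref{abliancomplement}) and the commutator identity $g^{-1}\tau g = [g,\tau^{-1}]\tau$, and the reverse inclusion via Lemma \ref{Isaacs}~e). The paper merely cites Lemma \ref{Isaacs} for the second step without spelling out the decomposition $\tau = nc$ and the identity $N\tau = Nc$, so your write-up is a slightly more explicit version of the same proof.
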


\begin{proof}
	Note that by the assumption that $C$ is abelian, we have $N=G'$ with Lemma \ref{abliancomplement}.
	Let $\tau \in G - N$ and let $C_{\tau}$ denote the conjugacy class of $G$ containing $\tau$. We show that $C_{\tau}=G' \tau $.
	Indeed, we have $C_\tau \subseteq G' \tau $, since $g^{-1}\tau g= [g, \tau^{-1}]\tau \in G' \tau$ for all $g \in G$. 
	Moreover, it follows from Lemma \ref{Isaacs} that $G'\tau=N \tau $ is a conjugacy class of $G$.
	Hence, $C_\tau = G' \tau$.
\end{proof}

 The $G$-invariant transversals of $H\backslash G$ containing the identity element, have a special form if we impose the following conditions: $G$ is a Frobenius group with an abelian Frobenius complement and $H$ is a subgroup of the Frobenius complement. This is proved in the next theorem. Furthermore, we give a construction for all these transversals and we count the number of pairwise distinct $G$-invariant transversals of $H \backslash G$ containing 1.

\begin{Theorem}
	\label{Frobeniustransversale}
	Let $G=C \ltimes N$ be a Frobenius group with abelian Frobenius complement $C$ and Frobenius kernel $N$. 
	Moreover, let $H \leq C$ and let $T$ be a $G$-invariant transversal for $H \backslash G$ with $1 \in T$, i.e. $(G,H,T)$ is an RCC loop folder.
	 Then we have 
	\begin{align*}
		T=N \;\dot\cup \; N\tau_1 \; \dot\cup \;\dotsb \; \dot\cup \;N\tau_n
	\end{align*} 
 with $n=|C:H|-1$ and $\tau_i \in G-N$ for all $1 \leq i \leq n$. In particular, $\{1,\tau_1, \dotsc, \tau_n \}$ is a transversal for the set of right cosets of $HN$ in $G$.
\end{Theorem}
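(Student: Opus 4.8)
The plan is to exploit the $G$-invariance of $T$ together with the description of the conjugacy classes lying outside the kernel. Since $C$ is abelian, Lemma \ref{abliancomplement} gives $N = G'$, and hence by Lemma \ref{FrobeniusConjugacyClass} every element $\tau \in G - N$ has conjugacy class exactly $N\tau$. Because $(G,H,T)$ is an RCC loop folder, $T$ is closed under conjugation, so $T$ is a union of conjugacy classes of $G$. Combining these two facts, whenever $t \in T$ lies outside $N$ the whole coset $Nt$ is contained in $T$. Thus $T - N$ is automatically a disjoint union of $N$-cosets, say $N\tau_1 \dot\cup \dots \dot\cup N\tau_m$ with $\tau_i \in G - N$, and it only remains to pin down $T \cap N$ and to count.

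The heart of the argument is to show $N \subseteq T$, equivalently $T \cap HN = N$. First I would record that $HN$ is the disjoint union of the $|H|$ cosets $Nh$, $h \in H$, and that, since $T$ is a transversal for $H \backslash G$ and $HN$ is a union of right $H$-cosets, $T \cap HN$ is a transversal for $H \backslash HN$; in particular $|T \cap HN| = |HN : H| = |N|$. Now suppose, for contradiction, that $T$ met $HN - N = \bigcup_{1 \ne h \in H} Nh$. Any such $h$ satisfies $h \in C - \{1\} \subseteq G - N$, so $Nh$ is a single conjugacy class by Lemma \ref{FrobeniusConjugacyClass}; by $G$-invariance $T$ would then contain all of $Nh$, and in particular the element $h$ itself. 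But $1 \in T$ and $h \in H$ both lie in the right coset $H = H \cdot 1$, which would put two elements of $T$ in one right coset of $H$, contradicting that $T$ is a transversal. Hence $T \cap HN \subseteq N$, and since $|T \cap HN| = |N|$ this forces $T \cap HN = N$, so $N \subseteq T$.

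With $N \subseteq T$ established, I would finish by counting. Because $H \leq C$ and $H \cap N = \{1\}$, we have $|T| = |G : H| = |C : H| \cdot |N|$, while $T = N \dot\cup (T - N)$ with $T - N$ a union of $m$ cosets of $N$; comparing cardinalities gives $m = |C : H| - 1 =: n$, which yields the displayed decomposition $T = N \dot\cup N\tau_1 \dot\cup \dots \dot\cup N\tau_n$. For the final assertion, set $\tau_0 := 1$. Since $T$ is a transversal for $H \backslash G$ and each coset $HN\tau_i$ decomposes into $|N|$ right $H$-cosets, $T$ meets every one of the $|G : HN| = |C : H| = n + 1$ cosets of $HN$, each in exactly $|N|$ elements; as each $N\tau_i$ lies in a single $HN$-coset and has exactly $|N|$ elements, the map $N\tau_i \mapsto HN\tau_i$ is a bijection onto the set of $HN$-cosets. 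Therefore the $HN\tau_i$ are pairwise distinct and exhaust $HN \backslash G$, i.e. $\{1, \tau_1, \dots, \tau_n\}$ is a transversal for $HN \backslash G$.

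I expect the main obstacle to be the middle step, the proof that $N \subseteq T$: the decomposition of $T - N$ into $N$-cosets is essentially forced by invariance, but identifying $T \cap N$ exactly requires the interplay between the conjugacy-class structure, which pushes whole cosets $Nh$ into $T$, and the transversal condition, which forbids two elements of $T$ in the coset $H$. The one point to handle carefully is the bookkeeping of indices, in particular the claim that $T$ meets each $HN$-coset in precisely $|N|$ elements, which rests on $HN$ being a union of right $H$-cosets and on $H \cap N = \{1\}$.
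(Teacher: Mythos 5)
Your proof is correct, and it follows the same overall skeleton as the paper's: both reduce to Lemma \ref{abliancomplement} and Lemma \ref{FrobeniusConjugacyClass} to see that $T-N$ is forced to be a disjoint union of cosets $N\tau$, and both then count to get $n=|C:H|-1$. The genuine difference is in the key step $N \subseteq T$. The paper argues by divisibility: $|N|$ divides $|T-N|$ and divides $|T|=|N|\,|C:H|$, hence divides $|T\cap N|$, which is positive because $1\in T\cap N$, so $|T\cap N|\geq |N|$. You instead localize to the subgroup $HN$: since $HN$ is a union of right $H$-cosets, $T\cap HN$ is a transversal for $H\backslash HN$ of size $|N|$, and any $t\in T\cap(HN-N)$ would drag the whole class $Nh$ (for some $1\neq h\in H$) into $T$, putting $h$ and $1$ in the same coset $H$ of $T$ --- a contradiction. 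Your argument is slightly longer but arguably more transparent about \emph{why} $T$ cannot stray into $HN-N$, and it sidesteps the paper's mildly awkward inequality $|T|-|T-N|>1$ (which should really read $\geq 1$); the paper's divisibility argument is shorter. For the final assertion the paper verifies directly that $G=\bigcup_{\tau\in S}HN\tau$ with $|S|=|G:HN|$, while you count that each $HN$-coset meets $T$ in exactly $|N|$ elements and deduce injectivity of $N\tau_i\mapsto HN\tau_i$; both are sound.
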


\begin{proof}
	Let $C_{\tau}$ denote the conjugacy class of $G$ containing $\tau \in G$.
	Note that by the assumption that $C$ is abelian, we deduce $N=G'$ with Lemma \ref{abliancomplement} and Lemma \ref{FrobeniusConjugacyClass} yields that $C_\tau=N\tau$ for all $\tau \in G - N$.
	
	Furthermore, we have $|T|=|G:H|=|N||C:H|$ and since $T - N$ is a union of conjugacy classes of the form $N\tau$, we have that $|N|$ divides $|T- N |$. Therefore, the order of $N$ divides $|T|-|T - N|$. As $|T|-|T - N| > 1$, we have $|T|-|T - N| \geq |N|$ and we deduce $N \subseteq T$.
	In conclusion, we have  $T=N \;\dot\cup \; N\tau_1 \; \dot\cup \;\dotsb \; \dot\cup \; N\tau_n $ for some $\tau_i \in G - N$ and $1 \leq i \leq n$.
	
	Moreover, for $S:=\{1,\tau_1 ,\dotsc, \tau_n \}$ we have 
	\begin{align*}
	G=\displaystyle\bigcup_{t\in T}Ht=\displaystyle\bigcup_{n\in N, \tau \in S}Hn\tau=\bigcup_{\tau \in S}HN\tau
\end{align*}
and we have $|S|=|C:H|=|G:HN|$. Thus, $S$ is a transversal for the set of right cosets of  $HN$ in $ G$.
\end{proof}

\begin{Rem}
	\label{BerechnungForbeniusRCCtransversale}
	Let the assumptions and notation be as in Theorem \ref{Frobeniustransversale} and let $S$ be a transversal of $HN$ in $G$ with $1 \in S$. Then it follows from Lemma \ref{GStrichSchnitH} that $G'S=NS$ is a $G$-invariant transversal for $H \backslash G$ with $1 \in S$, since $H \cap G' = \{1\}$. 
	
	From Theorem \ref{Frobeniustransversale}, we know that every $G$-invariant transversal $T$ of $H \backslash G$ with $1 \in T$ is of the form $NS$, where $S$ is a transversal for the set of right cosets of $HN$ in $G$ with $1 \in S$. 
	
	Thus, in order to compute every RCC loop folder with $G$ and $H$ fixed, i.e.\ every $G$-invariant transversal of $H \backslash G$ containing 1, we only have to extend every transversal $S$ of $HN$ in $G$ with $1 \in S $ 
	to the $G$-invariant transversal $NS$ of $H \backslash G$. 
 	\end{Rem}

\begin{Cor}
	Let the assumptions and notation be as in Theorem \ref{Frobeniustransversale}. Then there are $|H|^{|C:H|-1}$ pairwise
	distinct $G$-invariant transversals for $H$ in $G$ containing 1.
\end{Cor}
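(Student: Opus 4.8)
The plan is to set up a bijection between the $G$-invariant transversals in question and the ways of selecting, for each non-identity right coset of $HN$ in $G$, one of the $N$-cosets it contains. I would start from the description already obtained: by Theorem \ref{Frobeniustransversale} together with Remark \ref{BerechnungForbeniusRCCtransversale}, the $G$-invariant transversals $T$ for $H \backslash G$ with $1 \in T$ are exactly the sets $T = NS$, where $S$ is a transversal for the right cosets of $HN$ in $G$ with $1 \in S$. Since $N = G'$ is normal and $T$ is a union of full $N$-cosets, $T$ is completely determined by the collection $\{N\tau \mid \tau \in S\}$ of $N$-cosets of which it is the union, and conversely this collection recovers $T$.

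Next I would analyse which families of $N$-cosets can occur. Because $S$ contains exactly one element in each right coset of $HN$, the collection $\{N\tau \mid \tau \in S\}$ contains exactly one $N$-coset lying in each right coset of $HN$; moreover the coset $HN$ itself must contribute $N$, forced by $1 \in T$. The key counting input is that each right coset $HN\tau$ decomposes into exactly $|HN:N| = |H|$ distinct $N$-cosets, where $H \cap N = \{1\}$ because $H \leq C$ and $C \cap N = \{1\}$. Conversely, any selection of one $N$-coset from each of the $|G:HN| = |C:H|$ right cosets of $HN$, subject to $N$ being selected from the coset $HN$, arises from some admissible $S$ and hence, by Lemma \ref{GStrichSchnitH}, yields a valid $G$-invariant transversal.

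It then remains to count these selections. The coset $HN$ offers no freedom (its contribution is fixed to $N$), whereas each of the remaining $n = |C:H| - 1$ right cosets of $HN$ offers $|H|$ mutually independent choices; distinct selections produce distinct collections of $N$-cosets and hence distinct transversals $T$. Multiplying the independent choices, I obtain exactly $|H|^{|C:H|-1}$ such transversals, as claimed.

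The main obstacle I anticipate is avoiding an overcount: it is tempting to count admissible transversals $S$ directly, but many different $S$ yield the \emph{same} transversal $T = NS$, since any two representatives lying in a common $N$-coset are interchangeable. The crux of the argument is therefore to pass from $S$ to the genuinely invariant datum, namely the collection of $N$-cosets comprising $T$, and to verify that the choices made in distinct right cosets of $HN$ are independent and unconstrained, so that the final count is a clean product.
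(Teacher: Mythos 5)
Your argument is correct and follows essentially the same route as the paper: both reduce via Theorem \ref{Frobeniustransversale} and Remark \ref{BerechnungForbeniusRCCtransversale} to counting the possible sets $NS$, and both get the factor $|H|$ per non-trivial coset of $HN$ from $|HN:N|=|H|$ (i.e.\ $H\cap N=\{1\}$). The only difference is presentational — you count selections of $N$-cosets directly, whereas the paper fixes a reference transversal $S$, parametrizes the candidates by tuples in $H^{r}$, and verifies injectivity of $T\mapsto NT$ by an explicit computation; these are the same bijection.
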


\begin{proof}
	From Remark \ref{BerechnungForbeniusRCCtransversale} we know that the set
	\begin{align*}
		\mathcal{T}:=\{ NT \mid T \text{ tranversal for } HN \backslash G, 1 \in T \}
	\end{align*}
	contains every $G$-invariant transversal for $H \backslash G$ containing 1 and that every transversal of this set is a $G$-invariant transversal for $H \backslash G$. Thus, we want to show that $|\mathcal{T}|=|H|^{|C:H|-1}$.
	
	Let $S:=\{1,s_1,\dotsc,s_r\}$ be a transversal for $HN\backslash G$. Then we have 
	\begin{align*}
		r=|G:HN|-1=|C:H|-1.
	\end{align*}
	We set 
		$\mathcal{S}:=\{ \{1,h_1s_1, \dotsc, h_rs_r\}  \mid h_i \in H, 1 \leq i \leq r  \}$ and hence, it follows that
	\begin{align*}
				\mathcal{T}&=\{ NT \mid T \text{ tranversal for } HN \backslash G, 1 \in T \}\\
				&=\{ NT \mid T=\{1,n_1h_1s_1, \dotsc, n_rh_rs_r\}  \text{ with } n_i \in N, h_i \in H, 1 \leq i \leq r  \}\\
				&=\{ NT \mid T \in \mathcal{S}\}.
	\end{align*}

	Let $T_1,T_2 \in \mathcal{S}$ with $NT_1=NT_2$ and let $t_1 \in T_1$. Then we have $t_1=nt_2$ for some $n \in N$ and some $t_2 \in T_2$.
	Furthermore, we have $t_1=hs_i$ and $t_2=h's_j$ for some $1 \leq i,j \leq r$ and for some $h,h' \in H$. 
	The fact $h s_i = n h' s_j$ implies that
	\begin{align*}
		s_i s_j^{-1}=h^{-1}nh'=h^{-1}h'h'^{-1}nh' \in HN.
	\end{align*}
	Since $S$ is a transversal for $HN \backslash G$, it follows that $i=j$. This implies that $1=h^{-1}nh'$ and thus, $n=hh'^{-1} \in H \cap N = \{1\}$. We conclude that $t_1=t_2$ and hence, we have $T_1=T_2$.
	
	Now it follows that $|\mathcal{T}|=|\mathcal{S}|=|H|^r=|H|^{|C:H|-1}$.
\end{proof}

\begin{Rem}
	\label{BerechnungFrobeniusGruppenRCC}
	Let the assumptions and notation be as in Theorem \ref{Frobeniustransversale} and suppose additionally that $G=\langle T \rangle$. Since $G$ is a Frobenius group and we have $H \leq C$, it follows that $\text{\rm core}_G(H)=\{1\}$. Thus, $(G,H,T)$ is a faithful RCC loop folder with $G=\langle T \rangle$ and it follows with Theorem \ref{envelop}  that $(G,H,T)$ is an envelope of an RCC loop. 
	Furthermore, it follows from Theorem \ref{Frobeniustransversale} that $G=\langle T \rangle = \langle N, \tau_1 , \dotsc, \tau_n \rangle$ and hence, $C \cong G/N=\langle N\tau_1, \dotsc, N\tau_n \rangle$. To construct every envelope of an RCC loop with right multiplication group $G$ and stabilizer $H$, we only have to extend every transversal $S$ of $HN$ in $G$ with $1 \in S$, to a $G$-invariant transversal $T:=NS$ of $H \backslash G$ and test whether we have $ C \cong \langle N\tau_1, \dotsc, N\tau_n \rangle $.
\end{Rem}

\subsection{Affine groups}
In this section we give a generic example of Frobenius groups. We see that every 1-dimensional affine group over a field  is a Frobenius group and that we can apply the theory of the previous section to these groups. First we give a definition of the 1-dimensional affine group over a field.

\begin{Def}[{\cite[Definition p. 52]{dixon}}]
	Let $F$ be a field.
  The set of all permutations of $F$ of the form 
  \begin{align*}
	  t_{\alpha,\beta}: F \rightarrow F, x \mapsto \alpha x + \beta, \quad \alpha, \beta \in F \text{ and } \alpha \neq 0
  \end{align*} constitutes a subgroup of $\Sym (F) $ with composition as group multiplication.
   This group is called the 1-dimensional affine group over $F$ and is denoted by Aff($1,F$). In the special case, where $F$ is a finite field of order $q$, we denote the group by Aff$(1,q)$.
     
   For multiplying two elements $t_{\alpha,\beta}, t_{\gamma,\delta} \in \AffF$ we can use the following formula:
	\begin{align*}
(t_{\alpha,\beta} \circ t_{\gamma,\delta})(x)=t_{\alpha,\beta}(\gamma x + \delta ) = \alpha \gamma x + \alpha \delta + \beta = (t_{\alpha \gamma, \alpha \delta + \beta})(x)  \text{ for all } x \in F.
\end{align*}
	It follows directly that 
	\begin{align*}
		(t_{\alpha,\beta})^{-1}=t_{\alpha^{-1},-\alpha^{-1}\beta}
	\end{align*}
	
	and thus, conjugation is described by this formula:
	\begin{align}
	\label{conjugationformel}
	(t_{\alpha,\beta})^{-1} \circ t_{\gamma, \delta} \circ t_{\alpha,\beta}=t_{\gamma, \alpha^{-1}\beta\gamma+\alpha^{-1}\delta-\alpha^{-1}\beta}.
	\end{align}
\end{Def}

The following remark shows that the affine groups are Frobenius groups with abelian complement and the next theorem gives us useful properties of the affine groups.

\begin{Rem}
	\label{AffineRemark}
	Let $F$ be a field and put $P:=\{t_{1,\beta} \mid \beta \in F \} \leq \AffF$ and $L:=\{t_{\alpha,0} \mid \alpha \in F^* \} \leq \AffF$. Then we have $P \cong (F,+)$ and $ L \cong F^*$. It is easy to see that $P$ is a normal abelian subgroup of Aff$(1,F)$ and $L$ is an abelian subgroup. Furthermore, we have $P \cap L = \{t_{1,0}\}=\{\text{Id}\}$ and Aff($1,F)=LP$. Thus, Aff$(1,F) = L \ltimes P$. 
	
	Let $t_{\beta,\gamma} \in G - L$. 
	Thus, $\beta,\gamma \neq 0 $ and with the equation \eqref{conjugationformel} we have $L^{t_{\beta,\gamma}}=\{ t_{\alpha,\alpha \beta^{-1} \gamma - \beta^{-1} \gamma } \mid \alpha \in F^* \}$. 
	Since $\alpha \beta^{-1} \gamma - \beta^{-1} \gamma \neq 0$ for all $1 \neq \alpha \in F^*$, we have $L \cap L^{t_{\beta,\gamma}}=\{t_{1,0}\}$. Hence, $L$ is an abelian Frobenius complement of Aff($1,F$) and Aff($1,F$) is a Frobenius group.
\end{Rem}

\begin{Theorem}[{\cite[II, Satz 3.6]{huppert}}]
	\label{HuppertSatz3.6}
	Let $G$ be a transitive permutation group of prime degree $p$. Then the following statements are equivalent:
	\begin{enumerate}[a)]
		\item $G$ is solvable.
		\item $G$ has a normal Sylow $p$-subgroup.
		\item $G$ is permutation isomorphic to a subgroup of the affine group $\Aff$.
		\item $G$ is a Frobenius group.
	\end{enumerate}
\end{Theorem}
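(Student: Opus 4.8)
The plan is to fix a set $\Omega$ with $|\Omega|=p$ on which $G$ acts faithfully and transitively, and to run the cycle a) $\Rightarrow$ b) $\Rightarrow$ c) $\Rightarrow$ a) among the first three conditions, weaving in d) at the end. First I would record the two standing facts that drive everything. Since $G$ is transitive of degree $p$, the orbit--stabilizer theorem gives $p \mid |G|$, while $|G|$ divides $p!$; as $p^2 \nmid p!$, a Sylow $p$-subgroup $P \leq G$ has order exactly $p$ and is generated by a $p$-cycle. Moreover a transitive group of prime degree is primitive, since any block system has blocks of size dividing $p$.

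For a) $\Rightarrow$ b) I would take a minimal normal subgroup $M$ of the solvable group $G$; it is elementary abelian, hence a $q$-group for some prime $q$. Because $M \trianglelefteq G$ and $G$ is transitive, the $M$-orbits share a common size dividing $p$, so this size is $1$ or $p$. Faithfulness rules out $1$, so $M$ is transitive, forcing $q=p$ and, by the order of $P$, $|M|=p$; thus $P=M \trianglelefteq G$. For b) $\Rightarrow$ c) I would identify $\Omega$ with $\F_p$ so that a generator $\sigma$ of $P$ becomes $x \mapsto x+1$, and show that $N_{\Sym(\F_p)}(P)$ is exactly $\Aff$ (the holomorph of $C_p$): any $g$ normalising $P$ satisfies $g^{-1}\sigma g=\sigma^{a}$, and chasing the orbit of $0$ forces $g\colon x \mapsto ax+b$ with $a \neq 0$. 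Since b) gives $G \leq N_{\Sym(\F_p)}(P)$, this yields c). Finally c) $\Rightarrow$ a) is immediate, as $\Aff = L \ltimes P$ is metabelian, hence solvable, and subgroups of solvable groups are solvable.

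It remains to connect d). For c) $\Rightarrow$ d) I would note that a transitive $G \leq \Aff$ has $p \mid |G|$, so $G$ contains the unique normal translation subgroup $P$; writing $G=(G\cap L)\ltimes P$, multiplication by any $1 \neq a \in G\cap L$ fixes only the trivial translation, so $G\cap L$ acts fixed-point-freely on $P$ and is a Frobenius complement by Lemma \ref{Isaacs}, whence $G$ is a Frobenius group --- with the single degenerate exception $G=P\cong C_p$, which is regular and has no proper nontrivial complement, so strictly the equivalence of d) with the rest must be read modulo this regular case. For d) $\Rightarrow$ b) I would use Theorem \ref{HauptsatzFrobenius} to write $G=N\rtimes C$ with $N$ the Frobenius kernel; restricting the action to $N \trianglelefteq G$, the $N$-orbits again have equal size $1$ or $p$, and $N \neq 1$ together with faithfulness forces $N$ transitive, so $p \mid |N|$ and $P$ is a Sylow $p$-subgroup of $N$.

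The hard part will be the last step: passing from ``$P$ is a Sylow $p$-subgroup of the normal subgroup $N$'' to ``$P \trianglelefteq G$''. Since $G$ is Frobenius we have $\gcd(|N|,|C|)=1$, so $p \nmid |C|$ and every Sylow $p$-subgroup of $G$ meets $N$ in a full Sylow $p$-subgroup and hence lies in $N$; therefore $P \trianglelefteq G$ is equivalent to $P \trianglelefteq N$, i.e.\ to $P$ being the unique Sylow $p$-subgroup of $N$. This is where the genuine content sits: it follows from the nilpotency of the Frobenius kernel $N$, which makes its Sylow $p$-subgroup characteristic in $N$ and therefore normal in $G$. I expect establishing this normality to be the main obstacle, since it is exactly the point where the structure theory of Frobenius kernels, rather than elementary orbit counting, is required; every remaining implication is routine once $P \trianglelefteq G$ is in hand.
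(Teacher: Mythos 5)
The paper does not actually prove this statement: it is quoted from \cite[II, Satz 3.6]{huppert} and used as a black box (only the implications among a), b) and c) are invoked later, in the discussion following the theorem and in Chapter 4). So there is no in-paper proof to compare against, and your argument must be judged on its own. Your cycle a) $\Rightarrow$ b) $\Rightarrow$ c) $\Rightarrow$ a) is the classical Galois argument and is correct: the preliminary facts ($|P|=p$ because $p^2\nmid p!$, so every subgroup of order $p$ is generated by a $p$-cycle and acts regularly), the minimal-normal-subgroup argument for a) $\Rightarrow$ b), the identification $N_{\Sym(\F_p)}(P)=\Aff$ for b) $\Rightarrow$ c), and the solvability of the metabelian group $\Aff$ for c) $\Rightarrow$ a) are all sound.

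Two remarks on the part involving d). First, your observation that c) $\Rightarrow$ d) fails for the regular group $G=P\cong C_p$ is correct and worth stating explicitly: with the paper's definition of a Frobenius complement ($1\lneq C\lneq G$), the group $C_p$ satisfies a), b), c) but not d), so the equivalence as stated needs the proviso $|G|>p$. This is harmless for the paper, which only ever applies the theorem to groups properly containing $P$, but it is a genuine defect of the statement and you were right to flag it; for $G\neq P$ your verification that $G\cap L$ acts fixed-point-freely on $P$ and is therefore a Frobenius complement by Lemma \ref{Isaacs} is fine. Second, your d) $\Rightarrow$ b) is logically complete but rests on Thompson's theorem that Frobenius kernels are nilpotent --- a far deeper input than anything else in the argument, and than anything the paper itself develops about Frobenius groups, all of which is elementary. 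The reductions you carry out before invoking it (that $N$ is transitive, that $p$ divides $|N|$ exactly once, that every Sylow $p$-subgroup of $G$ lies in $N$, and that normality of $P$ in $N$ suffices) are all correct; only the final step, that $P$ is the unique Sylow $p$-subgroup of $N$, uses the heavy machinery. Citing Thompson is legitimate, but you should label it as the one non-elementary ingredient, since a reader would otherwise expect the whole theorem to sit at the level of the surrounding Frobenius theory in Chapter 3.
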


In conclusion, we can apply Theorem \ref{Frobeniustransversale}, Remark \ref{BerechnungForbeniusRCCtransversale} and Remark \ref{BerechnungFrobeniusGruppenRCC} to every subgroup of $\Aff$ properly containing $P$ as defined in Remark \ref{AffineRemark}.

In Chapter \ref{Infinite series of right multiplication groups} we see that the affine groups play also an important role as right multiplication groups of RCC loops of order $pq$.

\subsection{Further Frobenius groups}
Here we give another generic example of a Frobenius group with an abelian complement. These groups occur naturally as a point stabilizer in a Suzuki group. Suzuki groups are Zassenhaus groups and any subgroup of a Zassenhaus group fixing a point is a Frobenius group \cite[Chapter XI]{huppert3}.

\begin{Lemma}[{\cite[3.1 Lemma]{huppert3}}]
Let $K=\F_q$ with $q=2^{2m+1}$ and $m>0$. Then $K$ has exactly one automorphism $\pi$ such that $\pi^2(x)=x^2$ for $x \in K$, namely $\pi(x)=x^{2^{m+1}}$. For $a,b \in K$ and $\lambda \in K^*$, let 
\begin{align*}
	S(a,b) = 
	\begin{pmatrix}
		1 & 0 & 0 & 0 \\
		a & 1 & 0 & 0 \\
		b & \pi(a)& 1 &0 \\
		\pi(a)a^2+ab + \pi(b) & \pi(a)a+b &a &1 
	\end{pmatrix}
\end{align*}
 and
 \begin{align*}
	 M(\lambda)=
	 \begin{pmatrix}
		 \lambda^{1+2^m} & 0& 0 & 0 \\
		 0 & \lambda^{2^m} & 0 &0 \\
		 0 & 0 & \lambda^{-2^m}& 0 \\
		 0 &0 &0 &\lambda^{-1-2^m}
	 \end{pmatrix}.
 \end{align*}
 
The set 
 	\begin{align*}
 		\mathfrak{F}=\{ S(a,b) \mid a,b \in K\}
 	\end{align*}
 	is a group of order $q^2$ and the set 
 	\begin{align*}
	 	\mathfrak{H} = \{ M(\lambda) \mid \lambda \in K^* \}
 	\end{align*} 
	is a group isomorphic to $K^*$.
	Furthermore, $\mathfrak{F}\mathfrak{H}$ is a Frobenius group with Frobenius kernel $\mathfrak{F}$ and abelian Frobenius complement $\mathfrak{H}$.
\end{Lemma}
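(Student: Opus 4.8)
The plan is to verify each of the four claimed properties in turn, relying heavily on the explicit matrix descriptions of $S(a,b)$ and $M(\lambda)$ together with the criterion for a Frobenius complement supplied by Lemma \ref{Isaacs}. The statement about $\pi$ is a standard fact about the Frobenius automorphism of $\F_q$ with $q=2^{2m+1}$: since $\gcd(m+1,2m+1)=1$, the map $x \mapsto x^{2^{m+1}}$ is an automorphism, and squaring it gives $x \mapsto x^{2^{2m+2}}=x^{2\cdot 2^{2m+1}}=(x^{2^{2m+1}})^2=x^2$ by Fermat's little theorem in $\F_q$; uniqueness follows because any $\pi$ with $\pi^2(x)=x^2$ must be the power map $x\mapsto x^{2^k}$ with $2^{2k}\equiv 2 \pmod{2^{2m+1}-1}$, forcing $k=m+1$.

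Next I would show that $\mathfrak{F}$ is a group of order $q^2$. The order is immediate once closure is established, since the pairs $(a,b)$ range freely over $K^2$ and distinct pairs give distinct matrices (the $(2,1)$ and $(3,1)$ entries recover $a$ and $b$). To get closure and the group structure I would compute the product $S(a,b)S(c,d)$ directly and show it equals $S(a+c,\,e)$ for a suitable $e=e(a,b,c,d)\in K$; this exhibits $\mathfrak{F}$ as the set $\{S(a,b)\}$ closed under multiplication with identity $S(0,0)$, and inverses then follow by finiteness. The analogous computation $M(\lambda)M(\mu)=M(\lambda\mu)$ is a routine diagonal multiplication, giving at once that $\mathfrak{H}$ is a group with $M(\lambda)\mapsto\lambda$ an isomorphism onto $K^*$.

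For the Frobenius claim I would set $G:=\mathfrak{F}\mathfrak{H}$ and first check the hypotheses of Lemma \ref{Isaacs}: that $\mathfrak{F}\trianglelefteq G$, that $\mathfrak{F}\cap\mathfrak{H}=\{1\}$ (clear, since a nonidentity $M(\lambda)$ is diagonal with entries $\neq 1$ whereas every $S(a,b)$ is lower-triangular with all diagonal entries $1$), and that $G=\mathfrak{F}\mathfrak{H}$. Normality of $\mathfrak{F}$ follows from the group-law computation above once I verify $M(\lambda)^{-1}S(a,b)M(\lambda)\in\mathfrak{F}$, which the explicit diagonal conjugation makes into an $S(a',b')$ with $a',b'$ certain monomials in $\lambda,a,b$. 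I would then verify condition b) of Lemma \ref{Isaacs}: that $C_{\mathfrak{H}}(S(a,b))=\{1\}$ for every $S(a,b)\neq 1$. Concretely, $M(\lambda)$ centralizes $S(a,b)$ iff the conjugation fixes $(a,b)$, i.e.\ iff $\lambda^{1-2^m}a=a$ and the corresponding relation on $b$ holds; for $(a,b)\neq(0,0)$ this forces a nontrivial power of $\lambda$ to equal $1$ in a way that, since $K^*$ is cyclic of order $q-1$ and the relevant exponents are coprime to $q-1$, yields $\lambda=1$. By Lemma \ref{Isaacs}, $\mathfrak{H}$ is then a Frobenius complement, $G$ is a Frobenius group, and $\mathfrak{F}$ is its kernel; finally $\mathfrak{H}\cong K^*$ is abelian because $K^*$ is cyclic.

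The main obstacle I anticipate is the closure computation for $\mathfrak{F}$: the $(4,1)$ entry $\pi(a)a^2+ab+\pi(b)$ is designed so that the product of two such matrices stays inside $\mathfrak{F}$, and verifying this requires using the defining identity $\pi^2(x)=x^2$ (equivalently $\pi(x)^2=x\cdot$ something) together with the additivity $\pi(x+y)=\pi(x)+\pi(y)$ and the characteristic-two cancellations $x+x=0$. Getting the bottom-left entry of the product to collapse into the correct twisted form $\pi(a+c)(a+c)^2+(a+c)e+\pi(e)$ is the delicate, calculation-heavy step; everything else is bookkeeping with triangular and diagonal matrices.
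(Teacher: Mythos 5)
The paper offers no proof of this statement: it is quoted directly from Huppert--Blackburn (XI, 3.1 Lemma) and used as a black box, so there is no in-paper argument to measure yours against. Judged on its own terms, your outline follows the standard route and is sound in structure: the uniqueness of $\pi$ via $2(m+1)\equiv 1 \pmod{2m+1}$ is correct; the identification of $\mathfrak{F}$ as a group via closure of the parametrised matrices, with $(a,b)$ read off from the $(2,1)$ and $(3,1)$ entries, is right (the product is $S(a,b)S(c,d)=S(a+c,\,b+d+\pi(a)c)$); and invoking Lemma \ref{Isaacs} b) $\Rightarrow$ f) after checking $\mathfrak{F}\trianglelefteq \mathfrak{F}\mathfrak{H}$, $\mathfrak{F}\cap\mathfrak{H}=\{1\}$ and $C_{\mathfrak{H}}(S(a,b))=\{1\}$ for $S(a,b)\neq 1$ is exactly the intended use of the paper's machinery. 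That $\mathfrak{F}$ is then the Frobenius \emph{kernel} deserves one more sentence: $\mathfrak{F}$ is normal, disjoint from every conjugate of $\mathfrak{H}-\{1\}$, and has the cardinality prescribed by Theorem \ref{HauptsatzFrobenius}.

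Two points need repair or completion. First, your conjugation formula is off: since $M(\lambda)^{-1}S(a,b)M(\lambda)$ has $(i,j)$ entry $d_i^{-1}S_{ij}d_j$ with $d_1=\lambda^{1+2^m}$, $d_2=\lambda^{2^m}$, $d_3=\lambda^{-2^m}$, one gets $M(\lambda)^{-1}S(a,b)M(\lambda)=S(\lambda a,\ \lambda^{1+2^{m+1}}b)$, not an action by $\lambda^{1-2^m}$ on $a$. This actually makes the $a\neq 0$ case immediate ($\lambda=1$), but for $a=0$, $b\neq 0$ you must know $\gcd(2^{m+1}+1,\,2^{2m+1}-1)=1$, which you assert without justification; it follows from $(2^{m+1}+1)\mid(2^{2m+2}-1)$ and $\gcd(2^{2m+2}-1,2^{2m+1}-1)=2^{\gcd(2m+2,2m+1)}-1=1$. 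This coprimality is the one genuinely non-formal fact the Frobenius argument rests on, so it cannot be waved through. Second, you correctly identify the $(4,1)$-entry closure computation for $\mathfrak{F}$ as the delicate step but do not carry it out; as written the proposal is an accurate plan rather than a complete proof, since without that verification neither the group structure of $\mathfrak{F}$ nor the stability of $\mathfrak{F}$ under conjugation by $\mathfrak{H}$ is actually established.
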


\newpage
\section{Infinite series of right multiplication groups of RCC loops of order $pq$}
\lhead{\slshape 4 \quad  Infinite series of right multiplication groups}
\label{Infinite series of right multiplication groups}

Let $p,q$ be distinct primes.

In this chapter we start to classify the envelops of RCC loops of order $pq$ and we get infinite series of right multiplication groups of RCC loops of order $pq$. 
In \cite{artichiss} the authors determined all envelops of RCC loops of order $2p$. 
If $(G,H,T)$ is the envelop of an RCC loop of order $2p$, then there are three possible types for $G$. First, $G$ can be isomorphic to the wreath product $C_p \wr C_2$. Secondly, $G$ can be isomorphic to a subgroup of the affine group $\Aff$ and lastly, $G$ can be isomorphic to a group $K \times C_2$, where $K$ is an odd order subgroup of $\Aff$ (see \cite[Theorem 5.13]{artichiss}). 
In all these cases $G$ is solvable. 
Moreover, the authors showed that if $(G,H,T)$ is an envelop of an RCC loop of prime order, then $G$ is abelian (see \cite[Proposition 4.3]{artichiss}).

Now let $(G,H,T)$ denote the envelope of an RCC loop of order $pq$. Then $G$ acts faithfully and imprimitively on $H \backslash G$ (see Lemma \ref{Lemma envelop faithful} and \cite[Theorem 3.1]{artichiss}). Thus, $H$ is not a maximal subgroup of $G$. We define $K \lneq G$,\linebreak  such that $H \lneq K$. Without loss of generality, we choose $|G:K|=q$ and $|K:H|=p$.

Now put $T_1:=T \cap K$, $K_1:= \langle T_1 \rangle \leq K$ and $H_1:= H \cap K_1$.

\begin{Lemma}[{\cite[Lemma 5.1]{artichiss}}]
	\label{K1,H1,T1}
	Let the notation be as above. Then \linebreak \((K_1,H_1,T_1)\) is an RCC loop folder of order $p$ with $K_1$ abelian. Also $K_1 \trianglelefteq K$ and $K=HK_1$. Finally, $H_1 \trianglelefteq K.$
\end{Lemma}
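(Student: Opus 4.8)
The plan is to build $(K_1,H_1,T_1)$ from the restriction of $T$ to $K$ and then bootstrap commutativity of $K_1$ from the prime-order result \cite[Proposition 4.3]{artichiss}.

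First I would record the structural facts. Since $H\leq K$, the subgroup $K$ is a union of exactly $|K:H|=p$ right cosets of $H$, and $T_1=T\cap K$ picks out the unique representative of each of them; hence $T_1$ is a transversal for $H$ in $K$ with $1\in T_1$ and $|T_1|=p$. As $T$ is $G$-invariant and $K$ is closed under conjugation by its own elements, for $k\in K$ and $t\in T_1$ we get $k^{-1}tk\in T\cap K=T_1$, so $T_1$ is $K$-invariant; consequently $K_1=\langle T_1\rangle$ satisfies $K_1^{\,k}=\langle T_1^{\,k}\rangle=K_1$, i.e.\ $K_1\trianglelefteq K$. Because $T_1\subseteq K_1$ represents all cosets of $H$ in $K$, we have $K=\bigcup_{t\in T_1}Ht\subseteq HK_1$, so $K=HK_1$; the second isomorphism theorem then gives $K/K_1\cong H/H_1$, and a short index comparison yields $|K_1:H_1|=p$.

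Next I would verify that $(K_1,H_1,T_1)$ is an RCC loop folder of order $p$. The $K$-invariance of $T_1$ restricts to $K_1$-invariance, giving the RCC property. For the transversal condition, fix $x\in K_1$; since $K_1\trianglelefteq K$ we have $H_1^{\,x}=(H\cap K_1)^x=H^x\cap K_1\subseteq H^x$. Thus if $s,t\in T_1$ lie in the same coset of $H_1^{\,x}$, then $st^{-1}\in H^x$, and because $T$ is a transversal for $H^x$ in $G$ this forces $s=t$; together with $|T_1|=p=|K_1:H_1^{\,x}|$ this shows $T_1$ is a transversal for $H_1^{\,x}$ in $K_1$ for every $x\in K_1$. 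Hence $(K_1,H_1,T_1)$ is a loop folder of order $p$.

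The hard part will be commutativity of $K_1$. The loop $(T_1,\ast)$ of Lemma \ref{LoopFromLoopFolder} is an RCC loop of prime order $p$, so its right multiplication group $\RM(T_1)$ is abelian by \cite[Proposition 4.3]{artichiss}; acting faithfully and transitively on the $p$-element set $T_1$ (Remark \ref{OperationL}), an abelian group must act regularly, so $|\RM(T_1)|=p$. The action of $K_1$ on the cosets $H_1\backslash K_1$ identifies each $t\in T_1$ with the right multiplication $R_t$, giving a surjection $K_1\twoheadrightarrow\RM(T_1)$ with kernel $\text{\rm core}_{K_1}(H_1)$; thus $K_1/\text{\rm core}_{K_1}(H_1)\cong C_p$. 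Since $\text{\rm core}_{K_1}(H_1)\leq H_1$ and $|K_1:H_1|=p$, comparing indices forces $H_1=\text{\rm core}_{K_1}(H_1)\trianglelefteq K_1$ and $K_1'\leq H_1$. Now the conjugation trick finishes it: for $s,t\in T_1$ the element $t^{-1}st$ again lies in $T_1$, while $t^{-1}st=s[s,t]\in sH_1=H_1 s$ because $[s,t]\in K_1'\leq H_1$ and $H_1\trianglelefteq K_1$; as $T_1$ meets each $H_1$-coset exactly once, $t^{-1}st=s$, so all elements of $T_1$ commute and $K_1=\langle T_1\rangle$ is abelian. Finally, with $K_1$ abelian we have $H_1\trianglelefteq K_1$, and for $h\in H$ one computes $H_1^{\,h}=H^h\cap K_1^{\,h}=H\cap K_1=H_1$; since $K=HK_1$ then normalizes $H_1$, we conclude $H_1\trianglelefteq K$.
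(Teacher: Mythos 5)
Your proof is correct and follows essentially the same route as the paper: restrict $T$ to $K$, check the transversal and invariance conditions to get an RCC loop folder of order $p$, invoke \cite[Proposition 4.3]{artichiss}, and then deduce $K_1\trianglelefteq K$, $K=HK_1$ and $H_1\trianglelefteq K$ exactly as the paper does. The one difference is that the paper applies the cited proposition directly to conclude $K_1$ is abelian, whereas you apply it only to $\RM(T_1)$ and then supply the bridge (regularity of the abelian transitive action, $K_1/\text{\rm core}_{K_1}(H_1)\cong C_p$, and the commutator argument on $T_1$) — a worthwhile addition, since $(K_1,H_1,T_1)$ need not be faithful and hence need not literally be an envelope.
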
 

\begin{proof}
	We know that $K$ is the disjoint union of cosets $Ht$ for $t \in T_1$. It follows that $|T_1|=|K:H|=p$ and that $K_1$ is the disjoint union of the cosets $H_1t$ for $t \in T_1$. Since $T$ is $G$-invariant under conjugation, $T_1$ is invariant under conjugation in $K$ and thus, \((K_1,H_1,T_1)\) is an RCC loop folder of order $p$. With \cite[Proposition 4.3]{artichiss} we can conclude that $K_1$ is abelian and clearly, $K_1 \trianglelefteq K$ and $K=HK_1$. Since $K_1 \trianglelefteq K$, we can conclude $H_1 \trianglelefteq H$. Now it follows that $H_1 \trianglelefteq K$, as $K_1$ is abelian and $K=HK_1$.
\end{proof}

Let $C:=\displaystyle\bigcap_{k \in K} H^k \trianglelefteq K$ denote the kernel of the action of $K$ on the cosets of $H$ in $K$. 
In the next two theorems,
we classify the envelop $(G,H,T)$ under the conditions that $K$ is a normal subgroup, $C=\{1\}$ and $H_1=\{1\}$.\\

\begin{Theorem}
	Suppose that $K \trianglelefteq G$, $H_1=\{1\}$, $C=\{1\}$ and $K_1 \lneq C_G(K_1)$. Then $G \cong K \times C_q$ and $K$ is isomorphic to a subgroup of the affine group $\text{\rm Aff}(1,p)$.
\end{Theorem}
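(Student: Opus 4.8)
The plan is to first use the hypotheses $H_1=\{1\}$ and $C=\{1\}$ to pin down the structure of $K$ as a one-dimensional affine group, and then to exploit $K \trianglelefteq G$ together with $K_1 \lneq C_G(K_1)$ to split off a central direct factor $C_q$. Throughout I would work with the faithful transitive permutation action of $K$ on the $p$ cosets $H\backslash K$.

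First I would record the order information and the action. By Lemma \ref{K1,H1,T1} we have $K=HK_1$ with $K_1 \trianglelefteq K$, and since $H_1=H\cap K_1=\{1\}$ it follows that $|K_1|=|K:H|=p$, so $K_1\cong C_p$ and $K=K_1\rtimes H$. The condition $C=\{1\}$ says exactly that $K$ acts faithfully on $H\backslash K$, so $K$ is a transitive permutation group of prime degree $p$. As $K_1$ is a nontrivial normal subgroup of order $p$ of the faithful transitive group $K$, its orbits all have equal size dividing $p$ and cannot be singletons; hence $K_1$ acts regularly. Writing $\Stab_K(\omega)=H$ for $\omega=H$, a standard argument identifies the action of $H$ on points with its conjugation action on the regular normal subgroup $K_1$ and shows the latter is faithful; thus $H$ embeds into $\text{Aut}(K_1)\cong C_{p-1}$, so $H$ is cyclic of order dividing $p-1$, $p\nmid|H|$, and $C_K(K_1)=K_1$. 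Consequently $K_1$ is the unique Sylow $p$-subgroup of $K$, and since $K$ is a transitive permutation group of prime degree $p$ with a normal Sylow $p$-subgroup, Theorem \ref{HuppertSatz3.6} gives that $K$ is (permutation) isomorphic to a subgroup of $\Aff$.

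For the second assertion I would propagate normality upward. Being the unique Sylow $p$-subgroup, $K_1\cha K$, and with $K\trianglelefteq G$ this yields $K_1\trianglelefteq G$, hence $D:=C_G(K_1)\trianglelefteq G$. From $C_K(K_1)=K_1$ we get $D\cap K=K_1$, so $D/K_1\cong DK/K$ is a subgroup of $G/K\cong C_q$; the hypothesis $K_1\lneq D$ forces this subgroup to be all of $C_q$, giving $|D|=pq$ and $G=DK$. Now $K_1\leq Z(D)$ and $D/K_1\cong C_q$ is cyclic, so $D/Z(D)$ is cyclic and $D$ is abelian; writing $D=K_1\times Q$ with $Q$ its Sylow $q$-subgroup gives $Q\cong C_q$ with $Q\cha D$, whence $Q\trianglelefteq G$. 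Finally $Q\cap K\leq D\cap K=K_1$ forces $Q\cap K=\{1\}$ by coprimality, and $|QK|=q|K|=|G|$ gives $G=QK$; since $Q$ and $K$ are both normal with trivial intersection, $G=K\times Q\cong K\times C_q$.

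The main obstacle is the middle link, namely establishing $C_K(K_1)=K_1$ by proving that the conjugation action of the complement $H$ on the regular normal subgroup $K_1$ is faithful. This single fact does double duty: it delivers $p\nmid|H|$ (needed to invoke Theorem \ref{HuppertSatz3.6} via the normal Sylow $p$-subgroup) and it pins $D\cap K$ down to exactly $K_1$, so that the additional centralizing elements guaranteed by $K_1\lneq C_G(K_1)$ must lie outside $K$ and assemble into precisely the $C_q$ direct factor. Some care is also needed to argue that $K_1$ is \emph{characteristic} in $K$, rather than merely normal, since it is this that lets normality be lifted from $K$ to $G$.
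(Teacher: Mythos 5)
Your proof is correct, and its overall architecture is the same as the paper's: identify $K_1$ as cyclic of order $p$, pin down $C_K(K_1)=K_1$, and use $K_1\lneq C_G(K_1)$ to manufacture a normal subgroup of order $q$ complementing $K$. Several individual steps are genuinely different, though, and mostly to your advantage. For the centralizer, the paper checks $C_H(K_1)\leq C=\{1\}$ by a direct coset computation, whereas you identify the action of $H$ on the $p$ points with its conjugation action on the regular normal subgroup $K_1$; both work, but yours also yields $|H|\mid p-1$, which you then use to make $K_1$ the unique Sylow $p$-subgroup of $K$ and hence characteristic, giving $K_1\trianglelefteq G$ — the paper instead gets this for free from the $G$-invariance of $T$ via $K_1^g=\langle (T\cap K)^g\rangle$. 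For the embedding into $\Aff$ you apply Theorem \ref{HuppertSatz3.6} to $K$ through the normal-Sylow condition b), which is self-contained in $K$, while the paper goes through solvability of $G$ (via the embedding of $G/C_G(K_1)$ into the abelian automorphism group of $K_1$) to invoke condition a). Finally, to see $|C_G(K_1)|=pq$ the paper counts $|HC_G(K_1)|=|H||C_G(K_1)|>|K|$ and forces $HC_G(K_1)=G$ by an index argument, whereas you read the order off $C_G(K_1)/K_1\cong C_G(K_1)K/K\leq G/K\cong C_q$; again both are sound. The one step you should write out rather than cite as standard is the faithfulness of the conjugation action of $H$ on the regular normal subgroup $K_1$: under the bijection $K_1\to H\backslash K$, $n\mapsto Hn$, the kernel of that conjugation action is the kernel of the action of $H$ on $H\backslash K$, which is contained in $C=\{1\}$. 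This single fact carries the weight of both halves of your argument, so it deserves the two lines it takes.
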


\begin{proof}
	Since $K \trianglelefteq G$ and $T$ is $G$-invariant, we have
	\begin{align*}
	K_1^g = \langle (T \cap K)^g \rangle = \langle T \cap K \rangle \text{ for all } g \in G.
	\end{align*}
	Thus, it follows $K_1 \trianglelefteq G$.
	Because $H_1=\{1\}$, it follows with Lemma \ref{K1,H1,T1}  that $|K_1|=|K_1:H_1|=p$. Hence, $K_1$ is cyclic.
	
	Suppose that $h \in C_H(K_1)$ and $k=k_1h_1 \in K=HK_1=K_1H$ with \linebreak $k_1 \in K_1,h_1 \in H_1$. Then we have $h^k=h^{k_1h_1}=h^{h_1} \in H$. Since $k$ was chosen arbitrarily, we have $h^k \in H$ for all $k \in K$ and hence, $h \in H^k$ for all $k \in K$. We conclude $C_H(K_1) \leq C = \{1\}$ and thus, $H \cap C_G(K_1)=C_H(K_1)=\{1\}$.
	Now this yields by our assumption that 
	\begin{align*}
	 |HC_G(K_1)|=|H||C_G(K_1)| > |H||K_1| = |K|.
	\end{align*}
	As $K_1 \trianglelefteq G$, we have $C_G(K_1) \trianglelefteq G$ and therefore, $HC_G(K_1)$ is a subgroup of $G$ containing $K=HK_1$. Together with the inequality above, we can conclude that the index of $HC_G(K_1)$ in $G$ has to be smaller than $|G:K|= q$ and has to divide $q$. Hence, the index is 1 and we have $G=HC_G(K_1)$. Moreover, we have $|C_G(K_1)|=|G:H|=pq=q|K_1|$.
	
	Now there exists $a \in C_G(K_1)$ with $|a|=q$.
	We have 
	\begin{align*}
	K_1 \leq K \cap C_G(K_1) \leq C_G(K_1).
	\end{align*}
	Suppose that $K \cap C_G(K_1) = C_G(K_1)$. 
	Then we conclude that $C_G(K_1) \leq K$.
	But since we also have $H \leq K$, it follows that $HC_G(K_1) \leq K$ and this is a contradiction to $G=HC_G(K_1)$.  Because $|C_G(K_1):K_1|=q$, this implies $K \cap C_G(K_1) = K_1$. As $a \notin K_1$ for reason of order, we have $a \notin K$.
	
	Since we have $K_1 \langle a \rangle \leq C_G(K_1)$ and $|C_G(K_1)|=|K_1||\langle a \rangle|$, it follows that $C_G(K_1)=K_1\langle a \rangle$.
	Hence, $C_G(K_1)=K_1 \times \langle a \rangle$, as $a$ centralizes $K_1$.
	As $K \trianglelefteq G$ and 
	\begin{align*}
	 \langle a \rangle = O_q(C_G(K_1)) \cha C_G(K_1) \trianglelefteq N_G(K_1) = G,
	 \end{align*}
	  it follows from  $|G:K|=q$ that $G = K \times \langle a \rangle.$
	
	By assumption, $K$ acts transitively and faithfully on the set of $H$-cosets in $K$. If $K$ is solvable, it follows from Theorem \ref{HuppertSatz3.6} that $K$  is isomorphic to a subgroup of the affine group Aff($1,p$). We show that $G$ is solvable. 
	We know that $C_G(K_1)=K_1 \times \langle a \rangle$ is solvable, since $K_1$ and $\langle a \rangle$ are solvable.
	Furthermore, $\bigslant{G}{C_G(K_1)} = \bigslant{N_G(K_1)}{C_G(K_1)}$
	 is isomorphic to a subgroup of Aut($K_1$) (see \cite[I, 4.5]{huppert}). Now Aut($K_1$) is abelian because $K_1$ is cyclic. Hence, $\bigslant{G}{C_G(K_1)}$ is solvable and it follows that $G$ is solvable.
\end{proof}

For the proof of the next theorem the following lemma is necessary. 
\begin{Lemma}
	\label{HelpingLemmaP}
	Let $G$ be a group and let $P \trianglelefteq G$ be a cyclic and normal subgroup of $G$ of order $p \in \Prim$ with $C_G(P)=P$. Then $P$ is the only Sylow $p$-subgroup of $G$ and $P$ has a complement $Q$. Further, $G$ acts transitively and faithfully on the set of cosets of $Q$ in G.
\end{Lemma}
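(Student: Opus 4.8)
The plan is to establish the three assertions in turn, using the self-centralizing hypothesis $C_G(P)=P$ as the engine throughout.

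First I would pin down the order of the quotient $G/P$. Since $P \trianglelefteq G$ we have $N_G(P)=G$, so the N/C theorem gives an embedding of $G/P = N_G(P)/C_G(P)$ into $\text{Aut}(P)$. As $P$ is cyclic of prime order $p$, we have $\text{Aut}(P)\cong C_{p-1}$, so $|G:P|$ divides $p-1$ and is in particular coprime to $p$. Hence $P$, being of order $p$, is a full Sylow $p$-subgroup of $G$; since it is normal, it is the unique one, which settles the first claim.

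Second, the coprimality $\gcd(|P|,|G:P|)=1$ just obtained lets me apply the Schur--Zassenhaus theorem, which yields a complement $Q$ with $G=PQ$, $P\cap Q=\{1\}$ and $|Q|=|G:P|$; in particular $Q$ is a $p'$-group. This gives the second claim.

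Third, transitivity of the right-multiplication action of $G$ on the cosets of $Q$ is automatic, so only faithfulness remains. I would identify the kernel of this action as $\text{core}_G(Q)=\bigcap_{g\in G}Q^g$, and show it is trivial. Setting $N:=\text{core}_G(Q)\trianglelefteq G$, the inclusion $N\subseteq Q$ gives $P\cap N\subseteq P\cap Q=\{1\}$; since $P$ and $N$ are both normal in $G$, the commutator $[P,N]$ lies in $P\cap N=\{1\}$, so $N$ centralizes $P$ and hence $N\leq C_G(P)=P$. Together with $P\cap N=\{1\}$ this forces $N=\{1\}$, and the action is faithful.

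The argument has no serious obstacle: everything follows once the N/C embedding forces $|G:P|$ to be coprime to $p$. The step demanding the most care is the faithfulness claim, where one must correctly recognize the kernel of the coset action as $\text{core}_G(Q)$ and then exploit the normality of both $P$ and $N$ to run the commutator computation; the remaining steps are a direct appeal to Schur--Zassenhaus and standard Sylow theory.
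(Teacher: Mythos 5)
Your proposal is correct, and the first two claims (uniqueness of the Sylow $p$-subgroup and the existence of the complement $Q$) are proved exactly as in the paper: the $N/C$-embedding of $G/P=N_G(P)/C_G(P)$ into $\mathrm{Aut}(P)\cong C_{p-1}$ forces $|G:P|$ to divide $p-1$, and Schur--Zassenhaus then supplies $Q$. The only genuine divergence is in the faithfulness step. The paper argues element by element: it observes that $P$ is a transversal for $Q\backslash G$, takes $x$ in the kernel, deduces $x\in Q$ from $Qx=Q$, and then uses the transversal property to conclude $y=x^{-1}yx$ for all $y\in P$, whence $x\in C_G(P)=P$ and so $x\in P\cap Q=\{1\}$. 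You instead work with the whole kernel at once: identifying it as $N:=\mathrm{core}_G(Q)\trianglelefteq G$, you get $P\cap N\subseteq P\cap Q=\{1\}$, and since $P$ and $N$ are both normal, $[P,N]\leq P\cap N=\{1\}$, so $N\leq C_G(P)=P$ and $N=\{1\}$. Both routes reduce to showing the kernel centralizes $P$ and then invoking $C_G(P)=P$ together with $P\cap Q=\{1\}$; your commutator argument is more conceptual and dispenses with the observation that $P$ is a transversal, while the paper's computation is more elementary and self-contained. Either is acceptable here.
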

\begin{proof}
	By Theorem \cite[I, 4.5]{huppert} we know that $\bigslant{G}{P} = \bigslant{N_G(P)}{C_G(P)}$ is isomorphic to a subgroup of the abelian group Aut($P$) and with Theorem \cite[I, 4.6]{huppert} we know that $|$Aut$(P)|=p-1$. 
	Thus, $|G:P|$ divides $p-1$ and by Zassenhaus' theorem \cite[I, Theorem 18.1]{huppert} $P$ has a complement $Q$ in $G$. Moreover, it follows that $P$ is the only Sylow $p$-subgroup of $G$.
	Clearly, $G$ acts transitively on the set $Q \backslash G$ of cosets of $Q$ in $G$. 
	It remains to show that $G$ acts faithfully on $Q \backslash G$.
	
	It follow from $G=QP$ and $Q \cap P = \{1\}$ that $P$ is a transversal for $Q \backslash G$.
	Now let $x \in G$ with $Qyx=Qy$ for all $y \in P$.
	In particular, we have $Qx=Q$, which implies that $x \in Q$.
	As $P \trianglelefteq G$, we have $x^{-1}yx \in P$ for all $y \in P$. 
	With the fact that $P$ is a transversal for $Q \backslash G$, the equation $Qy=Qyx=Qx^{-1}yx$ for all $y \in P$ implies that $y=x^{-1}yx$ for all $y \in P$.  
	Hence, $x \in C_G(P)=P$. 
	Since $P \cap Q = \{1\}$, we conclude that $x=1$.
	Thus, $G$ acts transitively and faithfully on $Q \backslash G$.
\end{proof}

\begin{Theorem}
	Suppose that $K \trianglelefteq G$, $H_1=1$ and $K_1 = C_G(K_1)$. Then $G$ is isomorphic to a subgroup of the affine group $\text{\rm Aff}(1,p)$.
\end{Theorem}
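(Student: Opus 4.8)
The plan is to reduce everything to Lemma \ref{HelpingLemmaP} and then invoke Theorem \ref{HuppertSatz3.6}. First I would reproduce the opening argument of the preceding theorem, which rests only on the hypotheses $K \trianglelefteq G$ and $H_1 = \{1\}$ that are still in force. Since $T$ is $G$-invariant and $K \trianglelefteq G$, we have $K_1^g = \langle (T \cap K)^g \rangle = \langle T \cap K \rangle = K_1$ for every $g \in G$, so $K_1 \trianglelefteq G$. Since $H_1 = \{1\}$, Lemma \ref{K1,H1,T1} gives $|K_1| = |K_1 : H_1| = p$, so that $K_1$ is cyclic of order $p$. Thus, before using the new hypothesis at all, I already have a cyclic normal subgroup $K_1$ of prime order $p$ in $G$.

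Next I would observe that the added assumption $K_1 = C_G(K_1)$ is precisely what is needed to apply Lemma \ref{HelpingLemmaP} with $P := K_1$: indeed $K_1$ is a cyclic normal subgroup of $G$ of prime order $p$ with $C_G(K_1) = K_1$. The lemma then supplies a complement $Q$ of $K_1$ in $G$ and asserts that $G$ acts transitively and faithfully on the coset space $Q \backslash G$. Because $Q$ is a complement of $K_1$, this coset space has $|G : Q| = |K_1| = p$ elements, so $G$ is permutation isomorphic to a transitive permutation group of prime degree $p$.

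Finally I would feed this into Theorem \ref{HuppertSatz3.6}. Lemma \ref{HelpingLemmaP} also tells us that $K_1$ is the only Sylow $p$-subgroup of $G$, so $G$ has a normal Sylow $p$-subgroup; that is, condition (b) of Theorem \ref{HuppertSatz3.6} holds for the transitive degree-$p$ action of $G$ on $Q \backslash G$. Hence condition (c) holds as well, and $G$ is permutation isomorphic, and a fortiori isomorphic as an abstract group, to a subgroup of $\Aff$, which is the desired conclusion.

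I do not expect any genuine computational difficulty here; the whole proof is an assembly of results already established. The one point requiring care, and the step I would treat as the main obstacle, is bookkeeping of the permutation representation: one must make sure the action produced by Lemma \ref{HelpingLemmaP} has degree exactly the prime $p$ and that its unique normal Sylow $p$-subgroup is $K_1$, since it is precisely this matching of degree and normal Sylow subgroup that triggers the equivalence in Theorem \ref{HuppertSatz3.6}. The conceptual heart of the argument is simply that the self-centralizing condition $K_1 = C_G(K_1)$ is tailored to invoke Lemma \ref{HelpingLemmaP}.
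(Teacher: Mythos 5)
Your proposal is correct and follows exactly the same route as the paper: establish that $K_1$ is a cyclic normal subgroup of order $p$ (the argument carried over from the preceding theorem), then apply Lemma \ref{HelpingLemmaP} to obtain a faithful transitive degree-$p$ action with normal Sylow $p$-subgroup, and conclude via Theorem \ref{HuppertSatz3.6}. The only difference is that you spell out the bookkeeping (degree of the action, identification of the normal Sylow $p$-subgroup) that the paper leaves implicit.
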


\begin{proof}
	Again, we have that $K_1$ is cyclic of order $p$ and that $K_1 \trianglelefteq G$. As $C_G(K_1)=K_1$, it follows with Lemma \ref{HelpingLemmaP} that $G$ is a transitive permutation group of degree $p$ with a normal Sylow $p$-subgroup.  
	Then Theorem \ref{HuppertSatz3.6} implies that $G$ is isomorphic to a subgroup of the affine group Aff($1,p$).
\end{proof}

\newpage
\section{Construction of RCC loop folders}

\lhead{\slshape 5 \quad Construction of RCC loop folders}

In this chapter we construct RCC loop folders. 
First, we extend a given RCC loop folder by adding direct and semidirect factors to it and then we investigate if these constructions preserve generating transversals and faithfulness.
Furthermore, we construct invariant generating transversals and we prove that every abelian group with a small enough subgroup has a generating transversal for this subgroup. Then we construct envelops of non-associative RCC loops with these abelian groups. 

\subsection{Direct and semidirect products}

The construction of new RCC loop folders is straight forward as we see in the next theorems.

\begin{Theorem}
	\label{ConstructionGxQ}
	Let $(G,H,T)$ be an RCC loop folder and let $Q$ be a group. Then
	$(G \times Q, H \times \{1\}, T \times Q)$ is an RCC loop folder.
	Additionally, if $(G,H,T)$ is faithful, $(G \times Q, H \times \{1\}, T \times Q)$ is also faithful. Furthermore, if $T$ is a generating transversal, then $G \times Q = \langle T \times Q \rangle$. In particular, if $(G,H,T)$ is an envelop of an RCC loop, then $(G \times Q, H \times \{1\}, T \times Q)$ is an envelop of an RCC loop.
\end{Theorem}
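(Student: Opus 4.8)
The plan is to verify each of the four assertions in turn, treating them as a chain where later claims reuse the structural facts established for earlier ones. First I would check the core claim: that $(G \times Q, H \times \{1\}, T \times Q)$ is a loop folder. This requires $1_{G \times Q} = (1_G, 1_Q) \in T \times Q$, which is immediate from $1_G \in T$, and that $T \times Q$ is a transversal for every conjugate $(H \times \{1\})^{(g,x)}$. Since $(H \times \{1\})^{(g,x)} = H^g \times \{1\}$, and cosets in the direct product factor as $(H^g \times \{1\})(t, y) = H^g t \times \{y\}$, the transversal property of $T \times Q$ reduces coordinatewise to the transversal property of $T$ for $H^g$ in the first factor (together with the fact that the second coordinate ranges over all of $Q$). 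I would then confirm the RCC property: for $(g,x) \in G \times Q$ and $(t,y) \in T \times Q$, conjugation gives $(t,y)^{(g,x)} = (t^g, y) \in T \times Q$ because $t^g \in T$ by the $G$-invariance of $T$ and $y \in Q$ trivially.

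For faithfulness, I would invoke Lemma \ref{faithful} and compute $\text{core}_{G \times Q}(H \times \{1\}) = \bigcap_{(g,x)} (H^g \times \{1\})$. Since the second coordinate is always $\{1\}$, this core equals $\left(\bigcap_{g \in G} H^g\right) \times \{1\} = \text{core}_G(H) \times \{1\}$, which is trivial precisely when $\text{core}_G(H) = \{1_G\}$, i.e. when $(G,H,T)$ is faithful. The generating claim is equally direct: $\langle T \times Q \rangle \supseteq \langle (t, 1) : t \in T \rangle \times \{1\}$ and $\langle T \times Q \rangle \supseteq \langle (1, y) : y \in Q \rangle$ after noting $(1_G, y) = (t, y)(t, 1)^{-1}$ for any fixed $t \in T$; since $\langle T \rangle = G$, the subgroup generated contains $G \times \{1\}$ and $\{1\} \times Q$, hence all of $G \times Q$.

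Finally, the ``in particular'' statement follows by combining the three preceding parts with Theorem \ref{envelop}: if $(G,H,T)$ is an envelope of an RCC loop, then it is a faithful RCC loop folder with $\langle T \rangle = G$, so by the parts just proved $(G \times Q, H \times \{1\}, T \times Q)$ is again a faithful RCC loop folder with a generating transversal, and Theorem \ref{envelop} identifies it as the envelope of the loop it defines. I do not anticipate a genuine obstacle here; the only point demanding care is the transversal verification, where one must correctly track how cosets of $H^g \times \{1\}$ factor through the direct product and confirm that the product $T \times Q$ meets each such coset exactly once. The cardinality count $|T \times Q| = |T|\,|Q| = |G:H|\,|Q| = |G \times Q : H \times \{1\}|$ provides a clean way to upgrade ``at most one representative per coset'' to ``exactly one,'' mirroring the counting step already used in the proof of Lemma \ref{GStrichSchnitH}.
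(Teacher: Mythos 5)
Your proposal is correct and follows essentially the same route as the paper's proof: coordinatewise verification of the transversal property for each conjugate $H^g \times \{1\}$, conjugation-invariance from $t^g \in T$, the core computation $\text{\rm core}_{G \times Q}(H \times \{1\}) = \text{\rm core}_G(H) \times \{1\}$ for faithfulness, and $\langle T \times Q\rangle = \langle T\rangle \times Q$ for generation, with the final clause assembled via Theorem \ref{envelop}. One trivial slip: $(t,y)^{(g,x)} = (t^g, y^x)$ rather than $(t^g, y)$ when $Q$ is non-abelian, but since $y^x \in Q$ the membership in $T \times Q$ is unaffected.
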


\begin{proof}
	Clearly, $H \times \{1\} \leq G \times Q$ and $(1,1) \in T \times Q$. For all $(g,q) \in G \times Q$ and $(t,s) \in T \times Q$ we have
	\begin{align*}
		(H \times \{1\})^{(g,q)}(t,s)=(H^g \times \{1\})(t,s)=H^gt \times \{s\}.
	\end{align*}
	If $H^gt \times \{q\}=H^gt' \times \{q'\}$
	for some $g \in G $ and $(t,q), (t'q') \in T \times Q$, we can conclude $q=q'$ and $H^gt=H^gt'$. As $T$ is a transversal for the  cosets of every $H^g$, $g \in G$, we have $t=t'$. 
	Moreover, the equation 
	\begin{align*}
		\bigcup_{(t,q) \in T \times Q} (H^gt \times \{q\}) = \bigcup_{t \in T} H^gt \times \bigcup_{q \in Q}\{q\} = G \times Q
	\end{align*}
	holds. 
	Finally, $T \times Q$ is invariant under conjugation in $G \times Q$, as $t^g \in T$ for all $g \in G$ and thus, the first statement follows.
	
	If $(G,H,T)$ is faithful, we have
	\begin{align*}
		\bigcap_{(g,q) \in G \times Q} (H \times \{1\})^{(g,q)} =\bigcap_{(g,q) \in G \times Q} H^g \times \{1\}= \bigcap_{g \in G } H^g \times \{1\} = \{1\} \times \{1\}.
	\end{align*}
	Hence, $(G \times Q, H \times \{1\}, T \times Q)$ is faithful.
	
	Clearly, if $T$ is a generating transversal, then we have $G \times Q = \langle T \rangle \times Q = \langle T \times Q \rangle $.
\end{proof}
	
	Let $(G,H,T)$ be an RCC loop folder and let $Q$ be a group. Suppose that $G$ acts on $Q$ as group of automorphisms. 
	Then we define $T \ltimes Q:=\{(t,q) \mid t \in T, q \in Q\}$ as a subset of $G \ltimes Q$ and we consider $H \ltimes \{1\}$ as subgroup of $G \ltimes Q$.

\begin{Theorem}
	\label{ConstcutionGltimesQ}
	Let $(G,H,T)$ be an RCC loop folder and let $Q$ be a group. 
	If $G$ acts on $Q$ as group of automorphisms, then $(G \ltimes Q, H \ltimes \{1\}, T \ltimes Q)$ is an RCC loop folder.
	Furthermore, if $T$ is a generating transversal, then $G \ltimes Q = \langle T \ltimes Q \rangle$.
\end{Theorem}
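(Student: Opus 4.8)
The plan is to mirror the proof of Theorem \ref{ConstructionGxQ}, writing $\Gamma := G \ltimes Q$ with elements denoted as pairs $(g,q)$, while now accounting for the twisting coming from the action of $G$ on $Q$. The device that keeps everything clean is the projection homomorphism $\pi \colon \Gamma \to G$, $(g,q) \mapsto g$, whose kernel is the normal subgroup $\{1\} \ltimes Q = \{(1,q) \mid q \in Q\}$; note that $\pi(H \ltimes \{1\}) = H$ and $\pi(T \ltimes Q) = T$, and that membership in $\{1\} \ltimes Q$ can be read off as ``$\pi(\cdot) = 1$'' without ever computing a twisted product.

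First I would check the loop folder conditions. Clearly $H \ltimes \{1\} \leq \Gamma$ and $(1,1) \in T \ltimes Q$. Fix $(g,q) \in \Gamma$ and set $U := (H \ltimes \{1\})^{(g,q)}$. Since $\{1\} \ltimes Q$ is normal, $U \cap (\{1\} \ltimes Q) = \big((H \ltimes \{1\}) \cap (\{1\} \ltimes Q)\big)^{(g,q)} = \{(1,1)\}$, while $\pi(U) = H^g$. Suppose $(t,s),(t',s') \in T \ltimes Q$ lie in a common right coset of $U$, that is $(t,s)(t',s')^{-1} \in U$. Applying $\pi$ gives $t(t')^{-1} \in H^g$, hence $H^g t = H^g t'$; as $T$ is a transversal for the right cosets of $H^g$ (because $(G,H,T)$ is a loop folder), this forces $t = t'$. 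Then $(t,s)(t,s')^{-1}$ projects to $1$ under $\pi$, so it lies in $\{1\} \ltimes Q$, and being in $U$ as well it lies in $U \cap (\{1\} \ltimes Q) = \{(1,1)\}$; therefore $s = s'$. Thus $T \ltimes Q$ meets each right coset of $U$ at most once, and the count $|\Gamma : U| = |G|\,|Q|/|H| = |G:H|\,|Q| = |T \ltimes Q|$ upgrades this to the transversal property.

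Next I would verify $\Gamma$-invariance, where the projection is most decisive. For $(g,q) \in \Gamma$ and $(t,s) \in T \ltimes Q$, the conjugate $(g,q)^{-1}(t,s)(g,q)$ projects to $t^g \in T$, using that $T$ is $G$-invariant, while its second component is automatically some element of $Q$. Hence the conjugate equals $(t^g, s')$ for some $s' \in Q$ and lies in $T \ltimes Q$, so $(\Gamma, H \ltimes \{1\}, T \ltimes Q)$ is an RCC loop folder. For the final claim, assume $\langle T \rangle = G$. Taking $t = 1$ shows $(1,s) \in T \ltimes Q$ for every $s \in Q$, whence $\{1\} \ltimes Q \leq \langle T \ltimes Q\rangle$; then for each $t \in T$ the coset $(t,s)(\{1\} \ltimes Q) = \{(t,u) \mid u \in Q\}$ contains $(t,1)$, so $(t,1) \in \langle T \ltimes Q\rangle$. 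Since $\langle \{(t,1) \mid t \in T\}\rangle = \langle T\rangle \ltimes \{1\} = G \ltimes \{1\}$ and $\Gamma = (G \ltimes \{1\})(\{1\} \ltimes Q)$, I conclude $\langle T \ltimes Q \rangle = \Gamma$.

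I expect no serious obstacle: the only places the semidirect twisting could intrude are the two spots where a second component arises, but in each I need only that the element lies in $\{1\} \ltimes Q$, i.e. has trivial image under $\pi$, which the projection supplies with no explicit twisted computation; conceptually the argument collapses to the direct product case of Theorem \ref{ConstructionGxQ}.
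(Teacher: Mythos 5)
Your proposal is correct and follows essentially the same route as the paper: the index count, the at-most-one-element-per-coset argument, conjugation-invariance, and the generation claim are all the same steps. The only (harmless) difference is bookkeeping: where the paper computes the twisted product $(t',q')(t,q)^{-1}=(t't^{-1},(q'q^{-1})^{t^{-1}})$ explicitly and invokes that conjugation is an automorphism of $Q$, you route the same conclusion through the projection $\pi$ and the normality of $\{1\}\ltimes Q$, which also lets you treat all conjugates $(H\ltimes\{1\})^{(g,q)}$ uniformly rather than only $H\ltimes\{1\}$ itself.
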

	
\begin{proof}
	It follows directly that
	\begin{align*}
		|G \ltimes Q : H \ltimes \{1\}|=|G:H||Q|=|T||Q|=|T \ltimes Q|.
	\end{align*}
	Now suppose that $(t',q'),(t,q) \in T \ltimes Q$ such that $(t',q')(t,q)^{-1} \in H \ltimes \{1\}$ and thus, we have
	\begin{align*}
		(t',q')(t,q)^{-1}=&(t',q')(t^{-1},(q^{-1})^{t^{-1}})=(t't^{-1},(q')^{t^{-1}} (q^{-1})^{t^{-1}} ) \\
		=&(t't^{-1},(q'q^{-1})^{t^{-1}}) \in H \ltimes \{1\}.
	\end{align*}
	This implies that $t't^{-1} \in H$ and $(q'q^{-1})^{t^{-1}}=1$. Since $T$ is a transversal for $H\backslash G$, we have $t=t'$ and
	as $(t^{-1})$ is an automorphism of $Q$, we have $q=q'$.
	Hence, $T \ltimes Q$ is a transversal for $H \ltimes \{1\} \backslash G \ltimes Q$.
	
	Since $t^g \in T$ for all $t \in T$ and $g \in G$, it follows with a simple calculation that $T \ltimes Q$ is invariant under conjugation in $G \ltimes Q$.
	
	Again, if $T$ is a generating transversal, then $G \ltimes Q = \langle T \rangle \ltimes Q = \langle T \ltimes Q \rangle $.
\end{proof}

If $(G,H,T)$ is an envelop of an RCC loop folder, then $(G \ltimes Q, H \ltimes \{1\}, T \ltimes Q)$ does not need to be an envelope of an RCC loop folder in general, because the loop folder does not need to be faithful. The next lemma gives a criterion for this problem for abelian groups.

\begin{Lemma}
	\label{ConstructionFaithful}
	Let $G$ be an abelian group and $H$ a subgroup of $G$. Let $Q$ be a group and suppose that $G$ acts on $Q$ as a group of automorphims. Then we have \begin{align*}
		\text{ \rm core}_{G \ltimes Q}(H \ltimes \{1\})=\{ h \in H \ltimes \{1\} \mid [h, \{1\}\ltimes Q  ]=1 \}.
	\end{align*} 
\end{Lemma}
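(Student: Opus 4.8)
The plan is to compute both sides explicitly and check they coincide. Throughout I use the multiplication rule $(a,x)(b,y)=(ab,x^{b}y)$ of $G\ltimes Q$ (as in the proof of Theorem \ref{ConstcutionGltimesQ}), together with the resulting inverse $(a,x)^{-1}=(a^{-1},(x^{-1})^{a^{-1}})$, where $x^{b}$ denotes the right action of $b\in G$ on $x\in Q$.

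First I would determine the conjugates of $H\ltimes\{1\}$. A direct computation of $(g,q)^{-1}(h,1)(g,q)$ in two steps yields first component $g^{-1}hg$ and a second component of the form $(q^{-1})^{g^{-1}hg}q$. Here the hypothesis that $G$ is abelian is decisive: it collapses $g^{-1}hg$ to $h$, so that
\begin{align*}
(g,q)^{-1}(h,1)(g,q)=(h,(q^{-1})^{h}q).
\end{align*}
In particular every conjugate $(H\ltimes\{1\})^{(g,q)}=\{(h,(q^{-1})^{h}q)\mid h\in H\}$ depends only on $q$, and taking $q=1$ recovers $H\ltimes\{1\}$ itself.

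Next I would intersect these conjugates to obtain the core. Since $H\ltimes\{1\}$ occurs among them, every element of the core lies in $H\ltimes\{1\}$ and hence has the form $(a,1)$ with $a\in H$. Such an element lies in the conjugate indexed by $q$ precisely when $(q^{-1})^{a}q=1$, i.e.\ when $a$ fixes $q$ under the action. Ranging over all $q\in Q$, membership in the core is therefore equivalent to $a$ acting trivially on $Q$.

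Finally I would match this with the right-hand side by computing the commutator $[(a,1),(1,q)]=(a,1)^{-1}(1,q)^{-1}(a,1)(1,q)$, which simplifies to $(1,(q^{-1})^{a}q)$. Thus $[(a,1),(1,q)]=1$ for all $q\in Q$ if and only if $a$ fixes every $q\in Q$, which is exactly the condition describing the core. Both sets therefore equal $\{(a,1)\mid a\in H,\ a\text{ acts trivially on }Q\}$, which proves the claim. The only real obstacle is bookkeeping: one must keep the right-action exponents straight through the conjugation and commutator computations, but the conceptual content is entirely the cancellation $g^{-1}hg=h$ coming from the commutativity of $G$.
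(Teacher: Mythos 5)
Your proof is correct; every computation checks out against the multiplication rule $(a,x)(b,y)=(ab,x^{b}y)$ used in the paper, and the identification of both sides with $\{(a,1)\mid a\in H,\ a \text{ acts trivially on } Q\}$ is sound. Your route differs from the paper's in one respect worth noting. The paper also begins by reducing the core to $\bigcap_{q}\hat{H}^{(1,q)}$ via the abelianness of $G$, but for the inclusion $\text{core}\subseteq\{h\mid [h,\hat{Q}]=1\}$ it avoids coordinates entirely: it observes that $\hat{C}:=\text{core}_{\hat{G}}(\hat{H})$ and $\hat{Q}:=\{1\}\ltimes Q$ are both normal in $\hat{G}$ with $\hat{C}\cap\hat{Q}\subseteq\hat{H}\cap\hat{Q}=\{1\}$, so $[\hat{C},\hat{Q}]\subseteq\hat{C}\cap\hat{Q}=\{1\}$ by the standard fact that two normal subgroups intersecting trivially commute; the reverse inclusion is then the one-line observation that $[h,\hat{Q}]=1$ forces $h=q^{-1}hq\in\hat{H}^{(1,q)}$ for all $q$. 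Your version replaces this abstract step with an explicit determination of each conjugate $\hat{H}^{(g,q)}=\{(h,(q^{-1})^{h}q)\mid h\in H\}$ and of the commutator $[(a,1),(1,q)]=(1,(q^{-1})^{a}q)$. What the paper's argument buys is brevity and independence from the semidirect-product bookkeeping; what yours buys is a concrete description of the core as the kernel of the action restricted to $H$, which makes the equality of the two sets transparent rather than established by two separate inclusions. Both are complete proofs.
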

\begin{proof}
	We write $\hat{G}:=G \ltimes Q$, $\hat{Q}:=\{1\} \ltimes Q$ and $\hat{H}:=H \ltimes \{1\}$. Then we have
	\begin{align*}
   		\hat{C}:=\text{ \rm core}_{\hat{G}}(\hat{H})=\bigcap_{(g,q) \in G \ltimes Q} \hat{H}^{(g,q)}=\bigcap_{(1,q) \in \hat{Q}} \hat{H}^{(1,q)},
	\end{align*}
	since $G$ is abelian. Clearly, we have $\hat{C} \trianglelefteq \hat{G}$ and $\hat{Q} \trianglelefteq \hat{G}$. As $\hat{C} \subseteq \hat{H}$, it follows that $\hat{C} \cap \hat{Q} \subseteq \hat{H} \cap \hat{Q} = \{1\}$. All these facts imply that $[\hat{C},\hat{Q}]=\{1\}$.
	Now let $x  \in \hat{C}$. Then we have $[x,\hat{Q}]=\{1\}$ and hence, $x \in \{ h \in \hat{H} \mid [h, \hat{Q}  ]=1 \}.$ Let $h \in \hat{H}$ with $[h,\hat{Q}]=\{1\}$. This implies that $h=q^{-1}hq$ for all $q \in \hat{Q}$ and thus, $h \in \hat{C}$. In conclusion, the statement follows.
\end{proof}

Let $(G,H,T)$ be an RCC loop folder such that $T$ is a generating transversal and let $Q$ be a group. 
If $G$ acts on $Q$ as group of automorphisms, we can apply Theorem \ref{ConstcutionGltimesQ} and if additionally $G$ is abelian and $[h,\{1\} \ltimes Q] \neq 1$ for all $h \in H \ltimes \{1\}$, then $(G \ltimes Q , H \ltimes \{1\},T \ltimes Q)$ is an envelop of an RCC loop.

Our construction from Theorem \ref{ConstructionGxQ} extends nicely for two given RCC loop folders.

\begin{Theorem}
	\label{ConstructionG1xG2}
	Let $(G_1,H_1,T_1)$ and $(G_2,H_2,T_2)$ be two RCC loop folders. Then the direct product $(G_1 \times G_2, H_1 \times H_2, T_1 \times T_2)$ is an RCC loop folder. If additionally $(G_1,H_1,T_1)$ and $(G_2,H_2,T_2)$ are faithful, then $(G_1 \times G_2, H_1 \times H_2, T_1 \times T_2)$ is also faithful. Moreover, if $T_1$ and $T_2$ are generating transversals, then $T_1\times T_2$ is also a generating transversal.
\end{Theorem}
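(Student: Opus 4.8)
The plan is to verify the three claims of Theorem \ref{ConstructionG1xG2} in turn, each one a direct product analogue of the corresponding statement in Theorem \ref{ConstructionGxQ}. The key structural fact throughout is that subgroups and cosets in a direct product decompose componentwise, and that conjugation acts independently in each factor: $(H_1 \times H_2)^{(g_1,g_2)} = H_1^{g_1} \times H_2^{g_2}$ for all $(g_1,g_2) \in G_1 \times G_2$. This reduces every condition to be checked to a pair of conditions, one in each factor, which hold by hypothesis.

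First I would establish that $(G_1 \times G_2, H_1 \times H_2, T_1 \times T_2)$ is a loop folder. Clearly $H_1 \times H_2 \leq G_1 \times G_2$ and $(1,1) \in T_1 \times T_2$ since $1 \in T_1$ and $1 \in T_2$. For the transversal property, I would show that $T_1 \times T_2$ meets each coset of each conjugate $(H_1 \times H_2)^{(g_1,g_2)} = H_1^{g_1} \times H_2^{g_2}$ exactly once. Counting gives
\begin{align*}
|T_1 \times T_2| = |T_1||T_2| = |G_1:H_1||G_2:H_2| = |G_1 \times G_2 : H_1 \times H_2|,
\end{align*}
so it suffices to show that $T_1 \times T_2$ contains at most one element of each such coset. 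If $(t_1,t_2)$ and $(t_1',t_2')$ lie in the same coset of $H_1^{g_1} \times H_2^{g_2}$, then $t_1't_1^{-1} \in H_1^{g_1}$ and $t_2't_2^{-1} \in H_2^{g_2}$, so $t_1 = t_1'$ and $t_2 = t_2'$ because $T_1$ and $T_2$ are transversals for $H_1^{g_1} \backslash G_1$ and $H_2^{g_2} \backslash G_2$ respectively. The RCC property is immediate: since $t_1^{g_1} \in T_1$ and $t_2^{g_2} \in T_2$ for all $g_i \in G_i$, we get $(t_1,t_2)^{(g_1,g_2)} = (t_1^{g_1}, t_2^{g_2}) \in T_1 \times T_2$.

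For faithfulness I would invoke Lemma \ref{faithful} and compute the core componentwise, using that the intersection over all $(g_1,g_2)$ factors as a product of intersections:
\begin{align*}
\bigcap_{(g_1,g_2)} (H_1 \times H_2)^{(g_1,g_2)} = \Bigl(\bigcap_{g_1 \in G_1} H_1^{g_1}\Bigr) \times \Bigl(\bigcap_{g_2 \in G_2} H_2^{g_2}\Bigr) = \{1\} \times \{1\},
\end{align*}
where the last equality uses faithfulness of each factor via Lemma \ref{faithful}. Finally, the generating claim follows from $\langle T_1 \times T_2 \rangle = \langle T_1 \rangle \times \langle T_2 \rangle = G_1 \times G_2$, using that $(t_1,1)$ and $(1,t_2)$ lie in $\langle T_1 \times T_2 \rangle$ to separate the factors. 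I do not anticipate a genuine obstacle here, as the argument is entirely parallel to Theorem \ref{ConstructionGxQ} with $Q$ replaced by a second loop folder; the only mild subtlety is justifying that the group generated by $T_1 \times T_2$ recovers each factor separately, which needs the explicit product-of-subgroups identity rather than a one-line claim.
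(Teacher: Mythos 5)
Your proposal is correct and follows essentially the same componentwise argument as the paper: decompose cosets, conjugation, the core, and the generated subgroup factor by factor. The only difference is cosmetic --- you verify the transversal property for every conjugate $(H_1 \times H_2)^{(g_1,g_2)}$ directly (which is slightly more thorough than the paper, which checks only the cosets of $H_1 \times H_2$ itself), and you use a counting-plus-injectivity argument where the paper checks covering and distinctness of cosets; both amount to the same thing.
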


\begin{proof}
	Since $T_1$ and $T_2$ are transversals for $H_1\backslash G_1$ respectively $H_2 \backslash G_2$, it follows directly that the equation  
\begin{align*}
	\bigcup_{(t_1,t_2) \in T_1 \times T_2} H_1t_1 \times H_2t_2 = \bigcup_{t_1 \in T_1} H_1t_1 \times \bigcup_{t_2 \in T_2} H_2t_2 =G_1 \times G_2
\end{align*} holds.
Moreover, if $H_1t_1 \times H_2t_2 = H_1t'_1 \times H_2t'_2$ for $(t_1,t_2),(t_1',t_2') \in T_1 \times T_2$, then we can conclude that $H_1t_1=H_1t'_1 \text{ and } H_2t_2=H_2t'_2$ and thus, we have $(t_1,t_2)=(t_1',t_2')$. Hence, $T_1 \times T_2$ is a transversal for $H_1 \times H_2 \backslash G_1 \times G_2$. As $(t_1,t_2)^{(g_1,g_2)}=(t_1^{g_1},t_2^{g_2})$ and $T_1$ and $T_2$ are invariant under conjugation in $G_1$ respectively $G_2$, $T_1 \times T_2$ is invariant under conjugation in $G_1 \times G_2$.
Now suppose that  $(G_1,H_1,T_1)$ and $(G_2,H_2,T_2)$ are faithful. Then the following equation holds
\begin{align*}
	\bigcap_{(g_1,g_2) \in G_1\times G_2} H_1^{g_1} \times  H_2^{g_2} = \bigcap_{g_1 \in G_1} H_1^{g_1} \times \bigcap_{g_2 \in G_2}  H_2^{g_2} = \{1\} \times \{1\}
\end{align*}
and thus, $(G_1 \times G_2, H_1 \times H_2, T_1 \times T_2)$ is also faithful.
Furthermore, if $T_1$ and $T_2$ are generating transversals, this implies that 
\begin{align*}
	G_1 \times G_2 = \langle T_1 \rangle \times \langle T_2 \rangle = \langle T_1 \times T_2 \rangle
\end{align*}
and hence, the last statement follows.
\end{proof}

\subsection{Construction of invariant generating transversals}
In the previous theorem we showed that  if we have a direct product of two groups $G_1$, $G_2$, two subgroups $H_1 \leq G_1$ and  $H_2 \leq G_2$ and two generating transversals, then $T_1 \times T_2$ is a generating transversal of $H_1 \times H_2 \backslash G_1 \times G_2$. But if there exists a generating transversal of $H_1 \times H_2 \backslash G_1 \times G_2$, there exists in general no generating transversal for $H_i \backslash G_i$ $(i=1,2)$. This is illustrated in the following example. 

\begin{Example}
Let $D_8:= \langle s,t \mid s^2=t^2=1, (st)^4=1 \rangle$ be the dihedral group of order 8 and set $\widetilde{H}:=\langle s \rangle$. Then $T_1:=\{ 1, t, sts, tsts\}$ and $T_2:=\{1,st,ts,tsts\}$ are the only two $D_8$-invariant transversals for $\widetilde{H} \backslash D_8$ containing 1 but $D_8 \neq \langle T_1 \rangle \neq \langle T_2 \rangle$. 

Let $C_2:=\langle a \mid a^2=1 \rangle$ and define $G:=C_2 \times D_8$ and $H:=\{1\} \times \langle s \rangle$. 
The sets $C_2 \times T_1$ and $C_2 \times T_2$ are transversals for $H \backslash G$ but they do not generate $G$. 
However, we can use the fact that $H(1,st) \neq H(a,t)$ in order to construct a generating transversal for $H$ in $G$:
\begin{align*}
 \begin{array}{llllll}
S:=\{ &(1,1), & (1,st), & (1,ts),& (1,tsts), & \\
&(a,1), & (a,t), & (a,sts),& (a,tsts) & \}. \\
\end{array}\\
\end{align*} 
As $T_1$ and $T_2$ are $G$-invariant transversals for $D_8$ and $C_2$ is abelian, $S$ is a $G$-invariant transversal for $H \backslash G$ and 
since we have 
\begin{align*}
(a,1)(1,st)(a,t)=(1,s),
\end{align*}
 we can conclude that $G = \langle S \rangle$. 
 Furthermore, we obtain that $(G,H,S)$ is a faithful RCC loop folder, as $|H|=2$ and $H$ is not normal in $G$.
 Thus, $(G,H,S)$ is an envelop of an RCC loop.
\end{Example}

In the above example we constructed with two invariant transversals, which together generate the whole group, a generating transversal. The following theorem shows that this a general phenomenon.

\begin{Theorem}
	Let $G$ be a group and $H \leq G$. Let $\mathcal{T}$ be the set of \linebreak $G$-invariant transversals of $H \backslash G$ containing 1. Suppose that there exists a set $\mathcal{S} \subseteq \mathcal{T}$ such that 
	\begin{align*}
		\big\langle \displaystyle\bigcup_{S \in \mathcal{S}} S \big\rangle =G.
	\end{align*}
	Let $Q$ be an abelian group such that $|Q| \geq |\mathcal{S}|$. Then there exists a \linebreak $G \times Q$-invariant generating transversal $T$ for $H \times \{1\}$ in $G \times Q$ with $1 \in T$.
\end{Theorem}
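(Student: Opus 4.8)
The plan is to distribute the given invariant transversals across the copies of $G$ indexed by $Q$, assigning to each $q \in Q$ one transversal from $\mathcal{S}$, and then to exploit that every member of $\mathcal{S}$ contains $1$ in order to recover both direct factors inside $\langle T\rangle$. Write $m := |\mathcal{S}|$ and list $\mathcal{S} = \{S_1, \dots, S_m\}$. Since $|Q| \geq m$, I can fix pairwise distinct elements $q_1, \dots, q_m \in Q$ with $q_1 = 1$; equivalently, I choose an injection $\mathcal{S} \hookrightarrow Q$. For every $q \in Q$ define
\[
	T_q := \begin{cases} S_i, & \text{if } q = q_i \text{ for some } i,\\ S_1, & \text{otherwise,} \end{cases}
\]
and set $T := \bigcup_{q \in Q} \bigl(T_q \times \{q\}\bigr) \subseteq G \times Q$. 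By construction each $T_q$ is a $G$-invariant transversal of $H \backslash G$ containing $1$, and every $S_i$ occurs as some $T_q$.

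First I would verify that $T$ is a $G \times Q$-invariant transversal for $H \times \{1\}$ with $(1,1) \in T$. The right cosets of $H \times \{1\}$ are exactly the sets $Hx \times \{q\}$, and $T \cap (Hx \times \{q\}) = (T_q \cap Hx) \times \{q\}$ is a singleton because $T_q$ is a transversal of $H \backslash G$; hence $T$ is a transversal for $(H \times \{1\}) \backslash (G \times Q)$, and $(1,1) \in T$ since $1 \in T_{q_1} = S_1$. For invariance, $(t,q)^{(g,q')} = (t^g, q)$ because $Q$ is abelian and central in $G \times Q$, and $t^g \in T_q$ as each $T_q$ is $G$-invariant; thus $T^{(g,q')} = T$. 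As a conjugate of a transversal for $H \times \{1\}$ is a transversal for the corresponding conjugate subgroup, invariance makes $T$ a simultaneous transversal for every $(H \times \{1\})^{(g,q')} = H^g \times \{1\}$, so $(G \times Q, H \times \{1\}, T)$ is indeed an RCC loop folder.

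The crux is to show $\langle T \rangle = G \times Q$. The key observation is that $1$ lies in every $S_i$ and hence in every $T_q$, so $(1,q) \in T$ for all $q \in Q$; therefore $\{1\} \times Q \subseteq \langle T \rangle$. Now take any $q \in Q$ and any $t \in T_q$: then $(t,q), (1,q) \in T$ give $(t,q)(1,q)^{-1} = (t,1) \in \langle T \rangle$. As $q$ ranges over $Q$ and $t$ over $T_q$, the resulting first coordinates exhaust $\bigcup_{q} T_q = \bigcup_{i=1}^m S_i$, so $G \times \{1\} = \big\langle \bigcup_{i=1}^m S_i \big\rangle \times \{1\} \subseteq \langle T \rangle$ by hypothesis. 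Since $\{1\} \times Q$ and $G \times \{1\}$ together generate $G \times Q$, I conclude $\langle T \rangle = G \times Q$.

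The step I expect to be the genuine obstacle --- and where a naive attempt fails --- is the recovery of the $Q$-direction. One is tempted to obtain $\{1\} \times Q$ from the tagged identities $(1,q_1), \dots, (1,q_m)$, but these only generate $\{1\} \times \langle q_1, \dots, q_m\rangle$, which can be proper when $|Q| > |\mathcal{S}|$ (for instance $Q = C_4$ with $m = 2$). The resolution clarifies the role of the hypothesis $|Q| \geq |\mathcal{S}|$: this bound is needed only to embed $\mathcal{S}$ into $Q$ so that every $S_i$ appears as some $T_q$ and the generation of $G$ can be inherited from $\mathcal{S}$; the $Q$-factor is then recovered not from the tags but from the single fact that $1$ belongs to every $T_q$, which yields $(1,q) \in T$ for all $q$ simultaneously.
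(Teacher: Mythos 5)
Your proposal is correct and follows essentially the same route as the paper: you build the transversal by tagging each $S_i$ with a distinct element of $Q$ and padding the remaining elements of $Q$ with $S_1$, then recover $\{1\}\times Q$ from the fact that $1$ lies in every $S_i$ and recover $G\times\{1\}$ by dividing off the tags. The only cosmetic difference is that you phrase the construction as a function $q\mapsto T_q$ rather than an explicit enumeration, and you spell out the generation step (and the role of the bound $|Q|\geq|\mathcal{S}|$) in more detail than the paper does.
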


\begin{proof} We denote the set $\mathcal{S}$ by $\{S_1, \dotsc,S_m\}$ and the abelian group $Q$ by $\{q_1,  \ldots,q_n\}$.
Now we can construct the transversal for $H \times \{1\}$ in $G \times Q$ as follows:
	\begin{align*}
		T:=\bigcup_{i=1}^{m} S_i \times \{q_i\} \;\cup \bigcup_{j=m+1}^n S_1 \times \{q_j\}.
	\end{align*}
	First, we prove that $T$ is a transversal for $H \times \{1\} \backslash G \times Q$. As $|G:H|=|S_i|$ for all $1 \leq i \leq m$, we obtain that
	\begin{align*}
		|T|&= \sum_{i=1}^{m} |S_i||\{q_i\}|+\sum_{j=m+1}^{n} |S_1||\{q_j\}|=|G:H|n=|G:H||Q|\\
		&=|G \times Q : H \times \{1\}|.
	\end{align*}
	Moreover, let $(s',q_i),(s,q_j) \in T$ such that $(s',q_i)(s,q_j)^{-1} \in H \times \{1\}$. 
	Then it follows that $q_i=q_j$ and thus, $i=j$. This yields that $s,s' \in S_k$ for $k=1$ or $k=i$. 
	As $S_k$ is a transversal for $H \backslash G$ and $s's^{-1} \in H$, we obtain that $s=s'$. 
	Hence, $(s',q_i)=(s,q_j)$ and we conclude that  $T$ is a transversal for $H \times \{1\} \backslash G \times Q$.
	Since $S_i$ is $G$-invariant for every $1 \leq i \leq m$ and $Q$ is abelian, it follows that $S_i \times \{q_j\}$ is $G \times Q$-invariant. Therefore, $T$ is $G \times Q$-invariant. 
	
	Since $\langle T \rangle$ contains $S \times 1$ for all $S \in \mathcal{S}$, it follows that $G \times \{1\} \leq \langle T \rangle$. Clearly, $\langle T \rangle$ contains $\{1\} \times Q$ and hence, we have $\langle T \rangle = G \times Q$. In conclusion, $T$ is a $G \times Q$-invariant generating transversal for $H \times \{1\}$ in $G \times Q$ with $1 \in T$.
\end{proof} 

If additionally to the assumptions of the previous theorem $G$ acts faithful on $H \backslash G$, Theorem \ref{ConstructionGxQ} implies that $(G \times Q, H \times \{1\}, T)$ is an envelop of an RCC loop. Thus, if $G$ acts faithful on $H \backslash G$ and if we can generate $G$ with the set of $G$-invariant transversals for $H \backslash G$, we can construct an envelop of an RCC loop.

\subsection{Construction of generating transversals for abelian groups}
	
	In the following section we investigate the existence of invariant generating transversals for certain subgroups in abelian groups.
	First, we show that for every abelian $p$-group $G$ with $p \in \Prim$ and every subgroup $H$ of $G$ of index larger than the minimal size of a generating set, there exists a generating transversal for $H \backslash G$ containing 1. 
	Then we construct a generating transversal for a subgroup $H$ in an arbitrary abelian group $G$ out of a generating transversal for a Sylow subgroup of $H$.
	Moreover, with these abelian groups we can construct envelops of non-associative RCC loops.
	
	We first define and prove some properties of the minimal size of a generating set.
  
\begin{Def}
	Let $G$ be a group. We call the minimal size of a generating set
	\begin{align*}
		\rk(G):=\min\{ |S| \mid S \subseteq G, G = \langle S \rangle \}
	\end{align*}
	the rank of the group $G$.
\end{Def}

\begin{Lemma}
	\label{LemmaRank2}
	Let $G_1$ and $G_2$ be groups such that $\gcd(|G_1|,|G_2|)=1$. Then we have $\rk(G_1 \times G_2)=\max\{\rk(G_1),\rk(G_2)\}$.
\end{Lemma}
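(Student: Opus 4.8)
The plan is to prove the two inequalities $\rk(G_1 \times G_2) \geq \max\{\rk(G_1), \rk(G_2)\}$ and $\rk(G_1 \times G_2) \leq \max\{\rk(G_1), \rk(G_2)\}$ separately, with the coprimality hypothesis entering only in the second.

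For the lower bound I would take a minimal generating set $S$ of $G_1 \times G_2$ with $|S| = \rk(G_1 \times G_2)$ and apply the coordinate projections $\pi_i \colon G_1 \times G_2 \to G_i$. Since each $\pi_i$ is a surjective homomorphism, $\pi_i(S)$ generates $G_i$, and $|\pi_i(S)| \leq |S|$, so $\rk(G_i) \leq \rk(G_1 \times G_2)$ for $i = 1, 2$; taking the maximum gives the desired inequality. This step is routine and uses no hypothesis on the orders.

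For the upper bound, set $m := \max\{\rk(G_1), \rk(G_2)\}$ and assume without loss of generality $\rk(G_1) = m \geq \rk(G_2) =: k$. Choose generating sets $\{a_1, \dotsc, a_m\}$ of $G_1$ and $\{b_1, \dotsc, b_k\}$ of $G_2$, and pad the latter by setting $b_{k+1} = \dotsb = b_m = 1$. I would then claim that the $m$ elements $(a_i, b_i)$, $1 \leq i \leq m$, generate $G_1 \times G_2$. The key computation is that $(a_i, b_i)^{|G_2|} = (a_i^{|G_2|}, 1)$ by Lagrange's theorem, and that since $\gcd(|G_1|, |G_2|) = 1$ the map $x \mapsto x^{|G_2|}$ restricts to an automorphism of the cyclic group $\langle a_i \rangle$; hence $\langle a_i^{|G_2|} \rangle = \langle a_i \rangle$ and $(a_i, 1)$ lies in the subgroup generated by $(a_i, b_i)$. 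Symmetrically, $(a_i, b_i)^{|G_1|} = (1, b_i^{|G_1|})$ recovers $(1, b_i)$.

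Combining these, the generated subgroup contains every $(a_i, 1)$ and every $(1, b_i)$, hence contains $G_1 \times \{1\}$ and $\{1\} \times G_2$ and therefore equals $G_1 \times G_2$. This yields $\rk(G_1 \times G_2) \leq m$, and together with the lower bound gives equality. The main obstacle is precisely the upper bound: the coprimality hypothesis is what makes the ``diagonal'' elements $(a_i, b_i)$ suffice, since it lets us separate the two coordinates by passing to coprime powers; without it (for instance $C_2 \times C_2$, where the rank is $2$ rather than $1$) the rank of the product can strictly exceed the maximum of the factor ranks.
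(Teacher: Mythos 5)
Your proposal is correct and follows essentially the same route as the paper: the lower bound via surjections onto the factors, and the upper bound via the diagonal set $\{(a_i,b_i)\}$ whose coprime powers recover $(a_i,1)$ and $(1,b_i)$. The only difference is cosmetic — you make the exponents $|G_1|$ and $|G_2|$ explicit where the paper merely asserts the existence of suitable $a_i, b_i \in \N$.
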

\begin{proof}
	Let $n:=\rk(G_1 \times G_2)$. Then $G_1 \times G_2$ can be generated by $n$ elements and thus, both $G_1$ and $G_2$ can be generated by $n$ elements as they are factor groups of $G_1 \times G_2$. 
	It follows that $\rk(G_i) \leq \rk(G_1 \times G_2)$ for $i=1,2$ and hence, we obtain
	\begin{align*}
	\max\{\rk(G_1),\rk(G_2)\} \leq \rk(G_1 \times G_2).
	\end{align*}
	
	Conversely, suppose, without loss of generality, that $m:=\rk(G_1) \geq \rk(G_2)$. Let $\{t_1, \dotsc, t_m\}$ be a minimal generating set for $G_1$ of size $m$ and let \linebreak
	$\{s_1, \ldots, s_m\}$ be a generating set for $G_2$. Now we show that 
	\begin{align*}
	S:=\{(t_i,s_i) \mid 1 \leq i \leq m\}
	\end{align*}
	 is a generating set for $G_1 \times G_2$. Since  $\gcd(|G_1|,|G_2|)=1$, there exist \linebreak
	  $a_i \in \N$ such that $(t_i,s_i)^{a_i}=(t_i,1)$ and there exist $b_i \in \N$ such that \linebreak
	  $(t_i,s_i)^{b_i}=(1,s_i)$ for every $1 \leq i \leq n$. This implies that $\langle S \rangle = G_1 \times G_2$ and hence, we have 
	\begin{align*}
	\rk(G_1 \times G_2) \leq |S| &=\max\{\rk(G_1),\rk(G_2)\}. &&\qedhere
	\end{align*}
\end{proof}

Let $G$ be an abelian group and let $p_1, \dotsc ,p_n$ be the distinct prime divisors of $G$. Then $G_i:=O_{p_i}(G) \leq G$ is the only Sylow $p_i$-group for all $1 \leq i \leq n$. It follows from \cite[5.1.4 Satz]{kurzweil} that $G$ is the inner direct product of $G_1, \dotsc, G_n$ and hence, without loss of generality, we can assume that 
\begin{align*}
G = G_1 \times G_2 \times \dotsb \times G_n.
\end{align*}
Let $H \leq G$ be a subgroup of $G$. Then $H_i:=O_{p_i}(H) \leq G_i$ and again, we can assume that 
\begin{align*}
H = H_1 \times H_2 \times \dotsb \times H_n.
\end{align*}

Now we transfer the result of Lemma \ref{LemmaRank2} to an arbitrary abelian group.

\begin{Lemma}
	\label{LemmaRankn}
	Let $G$ be an abelian group and let $p_1, \dotsc ,p_n$ be the distinct prime divisors of $G$.
	Assume that $G=G_1 \times \dotsb \times G_n$ with $G_i:=O_{p_i}(G)$ for all $1 \leq i \leq n$.
	Then $\rk(G)=\max\{\rk(G_i) \mid 1 \leq i \leq n\}$.
\end{Lemma}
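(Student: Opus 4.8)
The plan is to prove the statement by induction on $n$, the number of distinct prime divisors of $G$, using Lemma \ref{LemmaRank2} as the engine of the inductive step. The base case $n=1$ is immediate: then $G=G_1$ and both sides of the claimed equation reduce to $\rk(G_1)$.

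For the inductive step I would group the first $n-1$ factors together: set $K:=G_1 \times \dotsb \times G_{n-1}$, so that $G=K \times G_n$. The key point is that $\gcd(|K|,|G_n|)=1$, since the order of $K$ is divisible only by the primes $p_1, \dotsc, p_{n-1}$ while $|G_n|$ is a power of $p_n$, and the $p_i$ are pairwise distinct. This coprimality is exactly the hypothesis needed to invoke Lemma \ref{LemmaRank2} for the two factors $K$ and $G_n$, which yields $\rk(G)=\max\{\rk(K),\rk(G_n)\}$.

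Next I would apply the induction hypothesis to $K$. Here I note that $K$ is an abelian group whose distinct prime divisors are precisely $p_1, \dotsc, p_{n-1}$, and whose Sylow $p_i$-subgroup is $G_i$ for each $1 \leq i \leq n-1$; this holds because each $G_i$ is the unique Sylow $p_i$-subgroup of $G$ and hence also of $K$. The induction hypothesis then gives $\rk(K)=\max\{\rk(G_i) \mid 1 \leq i \leq n-1\}$. Substituting this into the previous identity and using associativity of the maximum produces $\rk(G)=\max\{\rk(G_i) \mid 1 \leq i \leq n\}$, as required.

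I do not expect a serious obstacle in this argument. The only point that needs a little care is the bookkeeping just mentioned, namely that $K$ genuinely satisfies the hypotheses of the lemma with $O_{p_i}(K)=G_i$ for $i<n$; once that is recorded, the remainder is a routine combination of Lemma \ref{LemmaRank2} with the induction hypothesis.
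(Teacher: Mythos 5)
Your proposal is correct and follows essentially the same route as the paper's own proof: induction on $n$, grouping the first $n-1$ Sylow factors into $\widetilde{G}=G_1\times\dotsb\times G_{n-1}$, invoking Lemma \ref{LemmaRank2} via the coprimality $\gcd(|\widetilde{G}|,|G_n|)=1$, and applying the induction hypothesis to $\widetilde{G}$. The bookkeeping point you flag (that $O_{p_i}(\widetilde{G})=G_i$ for $i<n$) is handled implicitly in the paper and is indeed the only detail worth recording.
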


\begin{proof}
	We prove this statement by induction on $n$.
	The base case for $G=G_1$ is trivial. 
	Thus, assume that the statement is true for groups with $n-1$ distinct prime divisors and suppose that $G= G_1 \times \dotsb \times G_n$.
	Then the induction hypothesis applied to $\widetilde{G}:=G_1 \times \dotsb \times G_{n-1}$ implies that 
	\begin{align*}
		\rk(\widetilde{G})=\max\{\rk(G_i) \mid 1 \leq i \leq n-1\}.
	\end{align*}
	Now consider $G=\widetilde{G} \times G_n$. Since we have  $\gcd(|\widetilde{G}|,|G_n|)=1$, we can apply Lemma \ref{LemmaRank2} and we conclude that 
	\begin{align*}
	\rk(G)= \max\{\rk(\widetilde{G}),\rk(G_n)\}= \max\{\rk(G_i) \mid 1 \leq i \leq n\}.
	\end{align*}
	Now the statement holds for every abelian group.
\end{proof}

If $G$ is a cyclic group, it is easy to see that there exists a generating transversal for every proper subgroup of $G$. Nevertheless, we construct such a transversal in the next remark.

\begin{Rem}
	\label{cyclicgenerating}
	Let $G$ be a cyclic group and suppose that $H \lneq G$ is a proper subgroup of $G$. Let $g \in G$ with $G=\langle g \rangle$ and let $n:=|G|$ and $d:=|H|$. Then $H=\langle g^{\frac{n}{d}} \rangle$. Thus, $T:= \{1,g, \dotsc ,g^{\frac{n}{d}-1}  \}$ is a transversal for $H$ in $G$ and as $g \in T$, it follows that $T$ is a generating transversal.
\end{Rem}

In the following two theorems we examine abelian $p$-groups with a subgroup $H$ and we see that there exists a transversal $T$ for $H\backslash G$ containing 1 such that either $\langle T \rangle =G$ or $T - \{1\}$ is a minimal generating set for $\langle T \rangle$.

For these theorems we need the following decomposition of an abelian group:\\
Let $G$ be an abelian group. Then it follows from the fundamental theorem of finite abelian groups that there exist cyclic groups $C_{m_i} \leq G$ for all $1 \leq i \leq r$ such that 
\begin{align*}
G \cong C_{m_1} \times C_{m_2} \times \dotsb \times C_{m_r},
\end{align*}
where $m_r > 1$ and $m_j$ divides $m_{j-1}$ for all $1 < j \leq r $.

\begin{Theorem}
	\label{pabeliangenerating}
	Let $G$ be an abelian $p$-group with $p \in \Prim$. Suppose that $H \leq G$ is a subgroup of $G$ such that $|G:H| > \rk(G)$. Then there exists a generating transversal $T$ for $H  \backslash G$ with $1 \in T$.
\end{Theorem}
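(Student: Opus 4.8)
The plan is to pass to the Frattini quotient $\overline{G} := G/pG$ and thereby reduce the problem to a piece of linear algebra. Since $G$ is an abelian $p$-group, its Frattini subgroup is $pG$, so $\overline{G}$ is an $\F_p$-vector space of dimension $\rk(G) = r$, and by the Burnside basis theorem a subset of $G$ generates $G$ if and only if its image under the quotient map $\pi\colon G \to \overline{G}$ spans $\overline{G}$. Writing $m := |G:H|$, the hypothesis reads $m > r$, that is $m - 1 \geq r$, and there are exactly $m-1$ nontrivial cosets of $H$. It therefore suffices to find $r$ distinct nontrivial cosets together with representatives $x_1,\dots,x_r$ whose images $\pi(x_1),\dots,\pi(x_r)$ form a basis of $\overline{G}$: these $x_i$ then generate $G$, and one obtains a generating transversal $T$ by assigning $x_i$ to its coset, $1$ to $H$, and arbitrary representatives to the remaining $m-1-r$ nontrivial cosets.

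First I would record, for a fixed coset $H + g$ (in additive notation), that the set of images of its representatives is the affine subspace $\pi(g) + W$ of $\overline{G}$, where $W := \pi(H)$ is the image of $H$. The construction then proceeds greedily. I maintain a set of $k < r$ distinct nontrivial cosets with chosen representatives $x_1,\dots,x_k$ whose images are linearly independent and span a subspace $U$ with $\dim U = k$, and I seek to extend it by one more coset. For this I must exhibit a hitherto-unused nontrivial coset possessing a representative with image outside $U$; call such a coset $U$-useful. By the previous observation, a coset $H+g$ fails to be $U$-useful precisely when $\pi(g) + W \subseteq U$, that is, when $W \subseteq U$ and $\pi(g) \in U$.

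The heart of the argument — and the step I expect to be the main obstacle — is guaranteeing that an unused, nontrivial, $U$-useful coset always exists; this is exactly where the hypothesis $m > r$ is consumed. I would split into two cases. If $W \not\subseteq U$, then every coset is $U$-useful, and since only $k \leq r-1 \leq m-2$ cosets have been used and $H$ reserved, at least $m - 1 - k \geq 1$ nontrivial cosets remain unused. If instead $W \subseteq U$, then the non-$U$-useful cosets are precisely those $H+g$ with $\pi(g) \in U$, i.e.\ those contained in $\pi^{-1}(U)$, a subgroup of index $p^{\,r-k}$ containing $H$; they number $m/p^{\,r-k} \leq m/p < m$. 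Since the already-chosen cosets and $H$ itself are all non-$U$-useful, every $U$-useful coset is unused and nontrivial, and at least one exists because $m - m/p^{\,r-k} \geq m/2 \geq 1$. In either case the induction continues until $k = r$, whereupon $x_1,\dots,x_r$ generate $G$ and the transversal is completed as above. The delicate point throughout is the bookkeeping separating ``used'' from ``useful'' cosets, together with the verification that the index hypothesis leaves a spare coset in the first case.
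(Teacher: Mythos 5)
Your argument is correct, and it takes a genuinely different route from the paper's. The paper fixes an invariant-factor decomposition $G \cong C_{m_1} \times \dotsb \times C_{m_r}$ and inducts on $|G|$, splitting into two cases according to whether some cyclic direct factor of $G$ lies inside $H$: when such a factor $U=\langle u\rangle$ exists it peels it off, applies the induction hypothesis to a complement $\widetilde{G}$, and repairs the non-generating transversal $\widetilde{T}\times\{1\}$ by swapping one entry $(t,1)$ for $(t,u)$, where $t$ is a redundant generator whose existence is forced by Burnside's basis theorem and the index hypothesis; when no such factor exists it argues that $Ha_i=Ha_j$ would make $\langle a_ia_j^{-1}\rangle$ a cyclic direct factor contained in $H$, so the $r$ generators already occupy distinct nontrivial cosets. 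You instead reduce the whole problem to linear algebra in the Frattini quotient $G/pG \cong \F_p^{\,r}$ and select representatives greedily, with the counting estimates $m-1-k \geq m-r \geq 1$ (when $\pi(H)\not\subseteq U$) and $m - m/p^{\,r-k} \geq m/2 \geq 1$ (when $\pi(H)\subseteq U$, using that the non-useful cosets are exactly those of $H$ in the subgroup $\pi^{-1}(U)$) guaranteeing the next step; I checked these and the bookkeeping of used versus useful cosets, and they hold. Your version is shorter, avoids both the induction and the somewhat delicate verification that $\langle a_ia_j^{-1}\rangle$ is again a direct factor, and isolates exactly where the hypothesis $|G:H|>\rk(G)$ is consumed; what the paper's case division buys is that its structure carries over almost verbatim to the companion Theorem \ref{pAbelianMinGenerating} for the opposite regime $|G:H|\leq\rk(G)$, which your greedy scheme would need a separate adaptation to recover.
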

\begin{proof}
	We distinguish between two cases in this proof.	
	
	\textbf{Case 1:}
	Suppose that there exist $C_{m_i} \leq G$ for $1 \leq i \leq r$ such that 
	\begin{align*}
	G \cong C_{m_1} \times C_{m_2} \times \dotsb \times C_{m_r}
	\end{align*}
	and $C_{m_i} \leq H$ for some $1 \leq i \leq r$.
		
	Note that the generators of these cyclic groups are a minimal generating set of size $r$. Thus, it follows from Burnside's basis theorem \cite[III, Satz 3.15]{huppert} that  $r=\rk(G)$.	
	
	We prove this case by induction on the order of $G$. The base case is trivial. 
	Now we show the induction step. Therefore, suppose that the statement holds for every abelian $p$-group of order less than $|G|$.

	We set $U:=C_{m_i}$ and let $u$ denote a generator of $U$. 
	Since $U$ is a direct factor of $G$, there exists a complement $\widetilde{G}$ to $U$ in $G$. 
	We set $\widetilde{H}:=\widetilde{G} \cap H$. Clearly, $\widetilde{H}$ is a complement to $U$ in $H$ and thus, without loss of generality, we consider
	\begin{align*}
		G=\widetilde{G} \times U \quad \text{ and } \quad  H =  \widetilde{H} \times U.
	\end{align*}
	Because we have
	\begin{align*}
		\widetilde{G} \cong C_{m_1} \times \dotsb \times C_{m_{i-1}} \times C_{m_{i+1}} \times \dotsb \times C_{m_r},
	\end{align*}
it follows that
\begin{align*}
\rk(\widetilde{G}) = r-1 < r =\rk(G).
\end{align*}
Since we have $\big|\widetilde{G}\big| < |G|$ and 
\begin{align}
\label{Ungleichung1}
\big| \widetilde{G}: \widetilde{H} \big| = |G:H| > \rk(G) > \rk(\widetilde{G}), 
\end{align}
we can apply the induction hypothesis to $\widetilde{G}$ and hence, there exists a generating transversal $\widetilde{T}$ for $\widetilde{H} \backslash \widetilde{G}$ with $1 \in \widetilde{T}$. 
Moreover, with \eqref{Ungleichung1} the equation 
\begin{align}
	\label{OrdnungTschlange}
	\big| \widetilde{T} - \{1\} \big| = \big| \widetilde{G}: \widetilde{H} \big|-1 > \rk(\widetilde{G}).
\end{align}
holds.
Suppose that $\widetilde{T}- \{1\}$ is a minimal generating set for $\widetilde{G}$. Then it follows from Burnside's basis theorem
\cite[III, Satz 3.15]{huppert} that $\big| \widetilde{T} - \{1\} \big| = \rk (\widetilde{G})$, but this is a contradiction to \eqref{OrdnungTschlange}.
Thus, $\widetilde{T}- \{1\}$ is not a minimal generating set and there exists a $t \in \widetilde{T} - \{1\}$ such that $t=t_1 \dotsb t_k$ for $t_1, \dotsc ,t_k \in \langle \widetilde{T} - \{1,t\} \rangle $.

We obtain that $\widetilde{T} \times \{1\}$ is a transversal for $H \backslash G$ because we have
\begin{align}
\label{T1}
\bigcup_{(t,1) \in \widetilde{T} \times \{1\}} (\widetilde{H} \times U) (t,1)=\bigcup_{t \in \widetilde{T}} \widetilde{H}t \times U = \widetilde{G} \times U = G
\end{align}
and 
\begin{align}
\label{T2}
|\widetilde{T} \times \{1\}|=|\widetilde{T}|=|G:H|.
\end{align}

We set
\begin{align*}
T:=(\widetilde{T} \times \{1\} - \{(t,1)\}) \cup \{(t,u)\}.
\end{align*}
$T$ is a transversal for $H \backslash G$ since $(t,1)$ and $(t,u)$ are lying in the same coset of $H$ in $G$. 
Now we prove that $T$ is a generating transversal for $H \backslash G$.  Recall that $t=t_1 \dotsb t_k$ for $t_1, \dotsc ,t_k \in  \langle \widetilde{T} - \{1,t\} \rangle $. We know that $((t_1 \dotsb t_k)^{-1},1) \in \langle T \rangle$, as $(t_1,1), \dotsc , (t_k,1) \in \langle T \rangle $. Hence, we obtain $(1,u)=((t_1 \dotsb t_k)^{-1},1)(t,u) \in \langle T \rangle$ and this implies that $(t,1)=(t,u)(1,u^{-1}) \in \langle T \rangle$. We conclude that $\langle T \rangle \geq \langle \widetilde{T} \rangle \times \langle u \rangle = \widetilde{G} \times U = G$ and thus, $T$ is a generating transversal for $H \backslash G$ containing 1.

\textbf{Case 2:}
Suppose that for every decomposition 
\begin{align*}
G \cong C_{m_1} \times C_{m_2} \times \dotsb \times C_{m_r},
\end{align*}
where $C_{m_i} \leq G$ for $1 \leq i \leq r$, we have  $C_{m_i} \nleq H$ for all $1 \leq i \leq r$.

If $r=1$, then the statement follows directly from Remark \ref{cyclicgenerating}. Hence, suppose that $r \geq 2$.

Now we consider one decomposition of $G$. Therefore, suppose that 
\begin{align*}
G \cong C_{m_1} \times C_{m_2} \times \dotsb \times C_{m_r},
\end{align*}
where $C_{m_i} \leq G$ for $1 \leq i \leq r$.
Let $a_i$ be a generator of $C_{m_i}$ for every $1 \leq i \leq r$.
Thus, we have $G=\langle a_1, \dotsc, a_r \rangle$ and the assumption that $H$ does not contain a cyclic direct factor of $G$ implies that $a_i \notin H$ for all $1 \leq i \leq r$.

Assume that $Ha_i=Ha_j$ with $1 \leq i,j \leq r$ and $i \neq j$. Without loss of generality, we assume that $|a_i|\geq|a_j|$.
Now we show that 
\begin{align}
\label{Zerlegung2}
G \cong \langle a_1 \rangle \times \dotsb \times \langle a_{i-1} \rangle \times \langle a_ia_j^{-1} \rangle \times \langle a_{i+1} \rangle \times \dotsb \times \langle a_r \rangle.
\end{align}
Since we have $ \displaystyle C_{m_k} \cap \prod_{l=1,l\neq k}^r  C_{m_l}= \{1\}$
for all $1 \leq k \leq r$, we only need to show that 
\begin{align}
\label{Inter}
\displaystyle \langle a_ia_j^{-1} \rangle \cap \prod_{l=1,l\neq i}^r  \langle a_k \rangle = \{1\}.
\end{align}
Suppose that $(a_ia_j^{-1})^n\in \displaystyle \langle a_ia_j^{-1} \rangle \cap \prod_{l=1,l\neq i}^r  \langle a_k \rangle$ for some $n \in \N$.
Then it follows that
\begin{align*}
\displaystyle a_i^n \in \displaystyle \langle a_i \rangle \cap \prod_{l=1,l\neq i}^r  \langle a_k \rangle = \{1\}.
\end{align*}
Thus, the intersection in \eqref{Inter} is trivial.
Moreover, the fact $|a_i|=|a_ia_j^{-1}|$ yields that 
\begin{align*}
 |G|= \prod_{k=1}^{r}|\langle a_k \rangle| = (\prod_{k=1, k \neq i }^{r} |\langle a_k \rangle|)\;|\langle a_i a_j^{-1}\rangle|
\end{align*}
and this implies that
\begin{align*}
G= \langle a_1 \rangle \dotsb\langle a_{i-1} \rangle\langle a_ia_j^{-1} \rangle  \langle a_{i+1} \rangle \dotsb \langle a_r \rangle.
\end{align*}
Hence, the equation in \eqref{Zerlegung2} holds.

Recall that we assumed $Ha_i=Ha_j$. Thus, it follows that $\langle a_ia_j^{-1} \rangle \leq H$. But this is a contradiction to the assumption that $H$ does not contain a cyclic factor of $G$ and hence, the generators $a_i$ and $a_j$ of $G$ lie in different cosets of $H$ in $G$. Since $i$ and $j$ were chosen arbitrarily, every generator $a_i$ of $G$ lies in a different coset of $H$ in $G$ and thus, we can construct a generating transversal for $H \backslash G$ containing 1.
\end{proof}

Now we consider the opposite case to Theorem \ref{pabeliangenerating} where $G$ is an abelian $p$-group such that the index of the subgroup $H$ is less or equal to the minimal size of a generating set of $G$.

\begin{Theorem}
		\label{pAbelianMinGenerating}
		Let $G$ be an abelian $p$-group with $p \in \Prim$. Suppose that $H \leq G$ is a subgroup of $G$ such that $|G:H| \leq \rk(G)$. Then there exists a transversal $T$ for $H  \backslash G$ with $1 \in T$ such that $\rk(\langle T- \{1\} \rangle)=|T|-1$, i.e. $T-\{1\}$ is a minimal generating set for $\langle T \rangle$.
\end{Theorem}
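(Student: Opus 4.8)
The plan is to pass to the Frattini quotient of $G$ and to reduce the statement to a question about linearly independent coset representatives there. Since $G$ is an abelian $p$-group, the subgroup $G^p:=\langle g^p\mid g\in G\rangle$ is its Frattini subgroup, and Burnside's basis theorem \cite[III, Satz 3.15]{huppert} identifies $\bar G:=G/G^p$ with $\F_p^{\rk(G)}$. Let $\pi\colon G\to\bar G$ denote the quotient map. The key observation I would record first is one-directional: if a subset $S\subseteq G$ has $\F_p$-linearly independent image $\pi(S)$, then $S$ is a minimal generating set of $U:=\langle S\rangle$; indeed the inclusion induces a map $U/U^p\to\bar G$ (as $U^p\subseteq G^p$), so independence of $\pi(S)$ forces independence of the image of $S$ in $U/U^p$, and since this image also generates $U/U^p$ it is a basis, whence $\rk(U)=|S|$. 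Thus it suffices to pick, for each of the $k-1$ nontrivial cosets $Hg_1,\dots,Hg_{k-1}$ (where $k:=|G:H|$), a representative $t_i$ such that $\pi(t_1),\dots,\pi(t_{k-1})$ are linearly independent; then $T:=\{1,t_1,\dots,t_{k-1}\}$ is the desired transversal.

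Next I would analyse the constraints on the images $\pi(t_i)$. Put $\bar H:=\pi(H)$, a subspace of $\bar G$ of dimension $d$, and set $e:=\rk(G)-d$, so that $[\bar G:\bar H]=p^e$. For a fixed coset $Hg_i$ the images of its representatives $\{hg_i\mid h\in H\}$ fill out exactly the $\bar H$-coset $\pi(g_i)+\bar H$, so the only restriction is that $\pi(t_i)$ must lie in a prescribed coset of $\bar H$, and conversely every vector of that coset is realised by some representative. Counting how the cosets of $H$ distribute over the cosets of $HG^p$ (there are $c:=|HG^p:H|=k/p^e$ of the former inside each of the latter), I find that $c-1$ of the $\pi(t_i)$ are forced into $\bar H$ itself and $c$ into each of the $p^e-1$ nontrivial $\bar H$-cosets. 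If $k=1$ the statement is trivial, and otherwise $HG^p\neq G$, so $e\geq1$ and the subspace $W$ below is nonzero.

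Fixing a complement $\bar G=\bar H\oplus W$ with $\dim W=e$, I would encode the problem as a matrix over $\F_p$: record the prescribed $W$-component of each required image as a column of an $e\times(k-1)$ matrix $A$, while the $\bar H$-components remain entirely free and form an arbitrary $d\times(k-1)$ block $B$. By the distribution just computed the columns of $A$ run through every element of $W$, each nonzero one $c$ times, so $\rk(A)=\dim W=e$, and the chosen images are linearly independent exactly when the stacked matrix $\left(\begin{smallmatrix}A\\ B\end{smallmatrix}\right)$ has full column rank $k-1$. Since $\dim\ker A=(k-1)-e$ and the hypothesis $|G:H|\leq\rk(G)$ gives $(k-1)-e\leq(\rk(G)-1)-e\leq d$, one can choose $B$ to be injective on $\ker A$, forcing $\left(\begin{smallmatrix}A\\ B\end{smallmatrix}\right)$ to have trivial kernel. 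Lifting each prescribed image back to an actual representative of its $H$-coset then produces $t_1,\dots,t_{k-1}$, and by the first paragraph $T-\{1\}$ is a minimal generating set of $\langle T\rangle$.

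The hard part is the content of the third paragraph: ensuring that the forced distribution of images among the cosets of $\bar H$ can be met by linearly independent vectors, and this is precisely where the hypothesis $|G:H|\leq\rk(G)$ is used, since it is what bounds $\dim\ker A$ below $\dim\bar H$ and lets the free block $B$ separate $\ker A$. The points needing care are the bookkeeping of the coset distribution and the verification that $\rk(A)=e$, so that the non-free part of the matrix already contributes its largest possible rank; the remaining steps are routine linear algebra together with the standard dictionary between minimal generating sets and bases of the Frattini quotient.
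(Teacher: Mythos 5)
Your proof is correct, and it takes a genuinely different route from the one in the thesis. The thesis argues by induction on $|G|$, distinguishing whether $H$ contains a cyclic direct factor of some invariant-factor decomposition $G \cong C_{m_1} \times \dotsb \times C_{m_r}$: in that case it splits off the factor and either invokes the induction hypothesis or falls back on Theorem \ref{pabeliangenerating} when $|G:H|=\rk(G)$, while the remaining case is shown to be impossible under the hypothesis $|G:H|\leq\rk(G)$ because the generators $a_1,\dotsc,a_r$ would then lie in pairwise distinct cosets of $H$. Your argument instead works entirely in the Frattini quotient $\bar G = G/G^p \cong \F_p^{\rk(G)}$ and is non-inductive: the dictionary between minimal generating sets and linearly independent subsets of $\bar G$ (your first paragraph) is sound, the bookkeeping of how the $H$-cosets distribute over the $\bar H$-cosets is right (with $HG^p \neq G$ guaranteed by the Frattini property once $H \lneq G$, so $e \geq 1$), and the rank computation $\rk(A)=e$, $\dim\ker A = (k-1)-e \leq d$ correctly isolates where the hypothesis $|G:H|\leq\rk(G)$ enters. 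What your approach buys is independence from Theorem \ref{pabeliangenerating} and a transparent explanation of the role of the hypothesis; what the thesis's approach buys is structural parallelism with the proof of Theorem \ref{pabeliangenerating} (the two proofs share their case division and most of their mechanics) and avoidance of the linear-algebra translation. Both are valid; yours is arguably the more conceptual of the two.
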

\begin{proof}
	The structure of this proof is very similar to the structure of the previous proof. 
	Hence, a few arguments used here are described in more detail in the previous proof. 	
	Similar to the previous proof, we distinguish between two cases.

	\textbf{Case 1:}
	Suppose that there exist $C_{m_i} \leq G$ for $1 \leq i \leq r$ such that 
	\begin{align*}
	G \cong C_{m_1} \times C_{m_2} \times \dotsb \times C_{m_r}
	\end{align*}
	and $C_{m_i} \leq H$ for some $1 \leq i \leq r$.
	
	We prove this case by induction on the order of $G$. The base case is trivial. Therefore, suppose that the statement holds for every abelian $p$-group of order less than $|G|$.
	
	We set $U:=C_{m_i}$. Once more, it follows that there exists a complement $\widetilde{G}$ to $U$ in $G$ and that $\widetilde{H}:=\widetilde{G} \cap H$ is a complement to $U$ in $H$.
	Thus, without loss of generality, we consider
	\begin{align*}
	G=\widetilde{G} \times U \quad \text{ and } \quad  H =  \widetilde{H} \times U.
	\end{align*}
	Furthermore, we have $\rk(G)=r$ and $\rk(\widetilde{G})=r-1$.
	
	\textbf{Case 1a:}
	Suppose that $|G:H|<\rk(G)$.\\
	This yields that 
	\begin{align*}
		|\widetilde{G}:\widetilde{H}|=|G:H|\leq \rk(G)-1=\rk(\widetilde{G}).
	\end{align*}
	Since we have $|\widetilde{G}|<|G|$, we can apply the induction hypothesis and thus, there exists a transversal $\widetilde{T}$ for $\widetilde{H} \backslash \widetilde{G}$ with $1 \in \widetilde{T}$ such that 
	\begin{align*}
		\rk(\langle \widetilde{T} - \{1\} \rangle )= |\widetilde{T}|-1.
	\end{align*}
	As we have seen in the proof of Theorem \ref{pabeliangenerating} from the equations \eqref{T1} and \eqref{T2}, the set $T:=\widetilde{T} \times \{1\}$ is a transversal for $H \backslash G$ with $1 \in T$. 
	Moreover, we obtain
	\begin{align*}
		\rk(\langle T - \{1\} \rangle )=\rk(\langle \widetilde{T} - \{1\} \rangle ) = |\widetilde{T}|-1 = |T|-1.
	\end{align*} 
	Hence, the statement follows by induction.
	
	\textbf{Case 1b:}
	Suppose that $|G:H|=\rk(G)$.\\
	This implies that 
	\begin{align*}
	|\widetilde{G}:\widetilde{H}|=|G:H|=\rk(G)=r> r-1 =\rk(\widetilde{G}).
	\end{align*}
	Now it follows from Theorem \ref{pabeliangenerating} that there exists a transversal $\widetilde{T}$ for $\widetilde{H} \backslash \widetilde{G}$ with $1 \in \widetilde{T}$ such that $\widetilde{G}=\langle \widetilde{T} \rangle$. 
	This yields that
	\begin{align*}
	|\widetilde{T}-\{1\}|=|\widetilde{G}:\widetilde{H}|-1=\rk(\widetilde{G})
	\end{align*} and hence, $\widetilde{T} - \{1\}$ is a minimal generating set  for $\widetilde{G}$. 
	We set $T:=\widetilde{T} \times \{1\}$. 
	Then once more, $T$ is a transversal for $H\backslash G$ with $1 \in T$. 
	Moreover, $\widetilde{T} \times \{1\}$ is a minimal generating set for $\widetilde{G} \times \{1\}$ and hence, $T - \{1\}$ is a minimal generating set for $\langle T \rangle$.
	We conclude that $T$ is a transversal for $H\backslash G$ with $1 \in T$ and $\rk(\langle T- \{1\} \rangle) = |T|-1$.
	
	\textbf{Case 2:} 
	Suppose that for every decomposition 
	\begin{align*}
	G \cong C_{m_1} \times C_{m_2} \times \dotsb \times C_{m_r},
	\end{align*}
	where $C_{m_i} \leq G$ for $1 \leq i \leq r$, we have  $C_{m_i} \nleq H$ for all $1 \leq i \leq r$.
	
	Now we consider one decomposition of $G$. Therefore, suppose that 
	\begin{align*}
	G \cong C_{m_1} \times C_{m_2} \times \dotsb \times C_{m_r},
	\end{align*}
	where $C_{m_i} \leq G$ for $1 \leq i \leq r$.
	Let $a_i$ be a generator of $C_{m_i}$ for every $1 \leq i \leq r$.
	Thus, we have $G=\langle a_1, \dotsc, a_r \rangle$. 
	Again, it follows that every generator $a_i$ of $G$ lies in a different coset of $H$ in $G$.
	But this is a contradiction to the assumption that $|G:H| \leq \rk(G)=r$ and thus, Case 2 does not occur.
\end{proof}

Next, with the results established before in this section we can prove our main theorem.
\begin{Theorem}
	\label{MainTheoremAbelian}
	Let $G$ be an abelian group, let $p_1, \dotsc p_n$ be the distinct prime divisors of $G$ and let $H \leq G$. Assume that 
	\begin{align*}
	G = G_1 \times \dotsb \times G_n \quad \text{ and } \quad  H = H_1 \times \dotsb \times H_n,
	\end{align*}
	with $G_i:=O_{p_i}(G)$ and $H_i:=O_{p_i}(H)$.
	If
	\begin{align*}
	\max\{ |G_i  : H_i| \mid 1 \leq i \leq n \} > \rk(G),
	\end{align*}
	then there exists a generating transversal for $H\backslash G$ containing 1.
\end{Theorem}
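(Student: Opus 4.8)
The plan is to single out one prime where the index is large, build a genuinely generating transversal there, and then exploit the coprimality of the Sylow factors to distribute the remaining generators across a twisted product transversal.

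First I would fix an index $k$ with $|G_k:H_k| = \max_i |G_i:H_i| > \rk(G)$. By Lemma~\ref{LemmaRankn} we have $\rk(G) = \max_i \rk(G_i) \ge \rk(G_k)$, so $|G_k:H_k| > \rk(G_k)$, and Theorem~\ref{pabeliangenerating} yields a \emph{generating} transversal $T_k = \{1, c_1, \dots, c_N\}$ for $H_k\backslash G_k$ with $\langle T_k\rangle = G_k$ and $N = |G_k:H_k| - 1 \ge \rk(G)$. For every $i \ne k$ I would merely choose some transversal $T_i$ for $H_i\backslash G_i$ with $1 \in T_i$; no special property is needed, because $T_i$ being a transversal already forces $\langle T_i \cup H_i\rangle = G_i$ (every $g \in G_i$ has the form $ht$ with $h\in H_i$, $t \in T_i$).

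Next I would twist the product transversal to inject generators of each $H_i$. For each $i \ne k$ pick a generating set $h^{(i)}_1,\dots,h^{(i)}_{r_i}$ of $H_i$ with $r_i = \rk(H_i)$; since $H_i \le G_i$ gives $r_i \le \rk(G_i) \le \rk(G) \le N$, there are enough slots. Starting from $P := T_k \times \prod_{i\ne k} T_i$, for each $j \in \{1,\dots,N\}$ I would replace the element $(c_j,1,\dots,1)$ by $(c_j, w_j)$, where $w_j \in \prod_{i\ne k} H_i$ has $i$-component $h^{(i)}_j$ when $j \le r_i$ and $1$ otherwise. Because each $w_j$ lies in $\prod_{i\ne k} H_i$, the replacement stays inside the same $H$-coset, so the resulting set $T$ is still a transversal for $H\backslash G$ and still contains $1$.

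It remains to check $\langle T\rangle = G$, and this is where the coprimality of the $|G_i|$ does the work, exactly as in the proof of Lemma~\ref{LemmaRank2}: for any $x = (x_1,\dots,x_n) \in \langle T\rangle$ and any index $i$, raising $x$ to an exponent $\equiv 1 \pmod{|G_i|}$ and $\equiv 0 \pmod{|G_l|}$ for $l\ne i$ (which exists by the Chinese remainder theorem) isolates the single component $(1,\dots,x_i,\dots,1) \in \langle T\rangle$. Applying this to the twisted elements recovers $c_1,\dots,c_N$ in slot $k$ and every $h^{(i)}_j$ in slot $i$, while applying it to the untwisted elements $(1,\dots,t^{(i)},\dots,1)$ of $P$ recovers all of $T_i$. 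Hence $\langle T\rangle$ contains $\langle c_1,\dots,c_N\rangle = \langle T_k\rangle = G_k$ in slot $k$ and $\langle T_i \cup H_i\rangle = G_i$ in each slot $i\ne k$, so $\langle T\rangle \supseteq \prod_i G_i = G$. The main obstacle is not any single hard step but getting the twist bookkeeping exactly right: I must ensure the replacements preserve the transversal property (which pins $w_j$ to $\prod_{i\ne k}H_i$ and forces me to supplement $\langle T_i\rangle$ by $H_i$ rather than by arbitrary elements of $G_i$), and I must ensure enough twist slots exist, i.e.\ $N \ge \rk(H_i)$ for all $i$, which is precisely where the hypothesis $|G_k:H_k| > \rk(G)$ together with Lemma~\ref{LemmaRankn} and $\rk(H_i)\le\rk(G_i)$ is consumed.
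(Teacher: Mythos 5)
Your proof is correct, and its skeleton matches the paper's: both single out the Sylow factor of maximal index, apply Theorem \ref{pabeliangenerating} there (made available by Lemma \ref{LemmaRankn}), and then use the coprimality trick from the proof of Lemma \ref{LemmaRank2} to recover the individual Sylow components from elements of the transversal. The constructions differ in the bookkeeping. The paper first builds a generating transversal $R$ for the intermediate subgroup $K = H_1\times\widetilde G$ by pairing the elements of the generating transversal of $H_1\backslash G_1$ with a minimal generating set of $\widetilde G = G_2\times\dotsb\times G_n$ (which requires only $\rk(\widetilde G)<|G_1:H_1|$, immediate from Lemma \ref{LemmaRankn}), and then passes to $T=VR$ where $V$ is a transversal for $H\backslash K$. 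You instead twist the full product transversal $P=\prod_i T_i$ in place, injecting generators of the subgroups $H_i$ rather than of the factors $G_i$, and you close the gap with the observation $\langle T_i\cup H_i\rangle=G_i$; your verification that the twists stay inside the relevant $H$-cosets and that the Chinese-remainder exponentiation isolates every needed component is sound. Your route avoids the intermediate subgroup $K$ and the two-step product $VR$, but it consumes one fact the paper never needs: that $\rk(H_i)\le\rk(G_i)$ for a subgroup $H_i$ of an abelian $p_i$-group $G_i$. This is true (for instance because the rank of a finite abelian $p$-group equals the $\F_{p}$-dimension of its subgroup of elements of order dividing $p$, and this subgroup only shrinks when passing to $H_i$), but it fails for groups in general, so you should state and justify it explicitly rather than derive it from the bare inclusion $H_i\le G_i$. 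With that line added, your argument is complete.
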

\begin{proof}
	Without loss of generality, we assume that 
	\begin{align*}
	|G_1:H_1|=\max\{ |G_i  : H_i| \mid 1 \leq i \leq n \}
	\end{align*}
	and we set $\widetilde{G}:= G_2 \times \dotsb \times G_n$ and $\widetilde{H}:= H_2 \times \dotsb \times H_n$.
	Now we consider $\widetilde{H}$ as subgroup of $\widetilde{G}$, $G=G_1 \times \widetilde{G}$ and $ H=H_1 \times \widetilde{H}.$
	
	We know from Lemma \ref{LemmaRankn} that
	\begin{align*}
	 m:=|G_1:H_1|&=\max\{ |G_i  : H_i| \mid 1 \leq i \leq n \} > \rk(G)\\ &= \max\{ \,\; \rk(G_i) \,\;\mid 1 \leq i \leq n \} \geq \rk(G_1).
	\end{align*}
	
	Since $G_1$ is an abelian $p_1$-group with $|G_1:H_1| > \rk(G_1)$, it follows from Theorem \ref{pabeliangenerating} that there exists a transversal $T_1=\{t_1, \dotsc, t_m\}$ for $H_1 \backslash G_1$ with $t_1=1$ and $G_1=\langle T_1 \rangle$.
	
	We define $K:= H_1 \times \widetilde{G}$. Then we have $H \leq K \leq G$. Now we construct a generating transversal for $K\backslash G$ containing 1. It follows from the assumptions that
	\begin{align*}
	|G:K|=|G_1:H_1||\widetilde{G}:\widetilde{G}|=|G_1:H_1|=|T_1|=m
	\end{align*}
	and from Lemma \ref{LemmaRankn} that
	\begin{align*}
	k:=rk(\widetilde{G})=&\max\{ \rk(G_i) \mid 2 \leq i \leq n  \} \leq \max\{ \rk(G_i) \mid 1 \leq i \leq n  \}\\ <& \max\{ |G_i  : H_i| \mid 1 \leq i \leq n \}=|G_1:H_1|=m
	\end{align*}
	Let $S$ be a minimal generating set of $\widetilde{G}$ such that $|S|=k$. Then we write $S \cup \{1\}:=\{s_1, \dotsb,s_{k+1}\}$ with $s_1=1$. As $|S \cup \{1\}|=k+1 \leq m$, we set
	\begin{align*}
	R:= \bigcup_{i=1}^{k+1} (t_i,s_i) \cup \bigcup_{j=k+2}^m (t_j,s_1).
	\end{align*}
	In the following, we show that $R$ is a generating transversal for $K \backslash G$ with $1 \in R$. As $t_1=1$ and $s_1=1$, we have $(1,1) \in R$. Moreover, we have $|R|=m=|G:K|$.
	Suppose that $(t_i,s_j),(t_k,s_l) \in R$ such that $(t_i,s_j)(t_k,t_l)^{-1} \in H_1 \times \widetilde{G}$. Then we have $t_it_k^{-1} \in H_1$ and as $T_1$ is a transversal for $H_1 \backslash G_1$, it follows that $i=k$. This implies that $j=l$.
	We conclude that $R$ is a transversal for $K\backslash G$. 
	
	The fact $\gcd(|G_1|,|\widetilde{G}|)=1$ yields that for every $(t,s) \in R$ there exist $a \in \N$ and $b \in \N$ such that $(t,s)^a=(1,s)$ and $(t,s)^b=(t,1)$. Hence, it follows that 
	\begin{align*}
	\langle R \rangle \geq \langle T_1 \rangle \times \langle S \rangle = G_1 \times \widetilde{G} =G
	\end{align*}
	and thus, $R$ is a generating transversal for $K \backslash G$ with $1 \in R$.
	
	Let $V$ be a transversal for $H \backslash K$ with $1 \in V$. Then we set 
	\begin{align*}
	T:= VR.
	\end{align*}
	Clearly, $1 \in T$. 
	The facts that  $R$ is a transversal for $K \backslash G$ and $V$ is a transversal for $H \backslash K$ imply that
	\begin{align*}
	\bigcup_{t \in T} Ht = \bigcup_{r \in R} (\bigcup_{s \in V} Hs)r = \bigcup_{r \in R} Kr =G
	\end{align*} 
	and 
	\begin{align*}
	|T|=|V||R|= |G:K||K:H| = |G:H|.
	\end{align*}	
	Hence, $T$ is a transversal for $H \backslash G$.
	Since $1 \in V$, we have $R \subseteq T$ and it follows that $\langle T \rangle \geq \langle R \rangle = G$. This implies that $T$ is a generating transversal for $H \backslash G$ with $1 \in T$. 
\end{proof}

Note that in Theorem \ref{envelop} we proved that an RCC loop folder $(G,H,T)$, where $G$ is non abelian, $G$ acts faithfully on $H \backslash G$ and $\langle T \rangle = G$, is an envelop of a non-associative RCC loop.

The next remark describes a construction of envelops of non-associative RCC loops with abelian groups. Hence, we obtain an infinite series of right multiplication groups of non-associative RCC loops.

\begin{Rem}
	Let $G$ be an abelian group and let $H$ be a subgroup of $G$ such that  
	\begin{align*}
		\max\{|O_p(G):O_p(H)| \mid p \text{ prime divisor of }G \} > \rk(G).
	\end{align*}
	Then it follows from Theorem \ref{MainTheoremAbelian} that there exists a generating transversal $T$ for $H \backslash G$. Clearly, $(G,H,T)$ is an RCC loop folder.
	Now we choose a group $Q$ such that $G$ acts on $Q$ as group of automorphisms and such that we have $[h,\{1\} \ltimes Q] \neq 1$ for all $h \in H \ltimes \{	1\}$. 
	Then it follows from Theorem \ref{ConstcutionGltimesQ} and Lemma \ref{ConstructionFaithful} that
	\begin{align*}
	(G \ltimes Q, H \ltimes \{1\}, T \ltimes Q)
	\end{align*}
	 is an envelop of a non-associative RCC loop.
\end{Rem}

\pagebreak

It is worth noting that there is an analog construction of the above construction for quotient groups. 
Instead of extending an abelian RCC loop folder with a generating transversal by adding a semi-direct factor, we can also extend a generating transversal from an abelian quotient to the whole group.

\begin{Cor}
	\label{CorAbelianGrp}
	Let $G$ be a group and let $H$ be a subgroup of $G$. 
	Let $Q$ be a normal subgroup of $G$ such that $G / Q$ is abelian, $H \cap Q = \{1\}$ 
	and 
	\begin{align*}
	\max\{|O_p(G/Q):O_p(HQ/Q)| \mid p \text{ prime divisor of } G/Q \} > \rk(G/Q).
	\end{align*}
	Then there exists a $G$-invariant generating transversal $T$ for $H \backslash G$ with $1 \in T$.
\end{Cor}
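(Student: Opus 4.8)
The plan is to reduce to Theorem \ref{MainTheoremAbelian} by passing to the abelian quotient $G/Q$, and then to lift the transversal obtained there to $G$ by means of a variant of Lemma \ref{GStrichSchnitH} in which $Q$ plays the role of $G'$. The reason for quotienting by $Q$ rather than by $G'$ is that the generation requirement should descend to exactly the group controlled by the hypothesis, namely $G/Q$.

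First I would set $\bar{G} := G/Q$ and $\bar{H} := HQ/Q$. Since $G/Q$ is abelian and the stated condition on the maximal index over the prime divisors of $G/Q$ is precisely the hypothesis of Theorem \ref{MainTheoremAbelian} applied to $\bar{G}$ and $\bar{H}$ (observing $O_p(\bar{G}) = O_p(G/Q)$ and $O_p(\bar{H}) = O_p(HQ/Q)$), that theorem furnishes a generating transversal $\bar{T}$ for $\bar{H} \backslash \bar{G}$ with $1 \in \bar{T}$, so that $\langle \bar{T} \rangle = \bar{G}$. By the correspondence theorem, choosing a representative in $G$ for each element of $\bar{T}$ (and lifting $1$ to $1$) yields a set $S \subseteq G$ with $1 \in S$ that is a transversal for $HQ$ in $G$ and whose image in $G/Q$ equals $\bar{T}$, hence generates $G/Q$.

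Next I would set $T := QS$ and verify the required properties, following Lemma \ref{GStrichSchnitH}. That $T$ is a transversal for $H \backslash G$ with $1 \in T$ comes from surjectivity together with the count: every $g \in G$ lies in $HQs = H(qs)$ for some $s \in S$ with $qs \in T$, while $|T| = |Q|\,|S| = |Q|\,|G:HQ| = |G:H|$ using $H \cap Q = \{1\}$, so the correct cardinality forces $T$ to be a transversal. For $G$-invariance I would repeat the commutator computation of Lemma \ref{GStrichSchnitH}: writing $t = qs$ with $q \in Q$ and $s \in S$, one has $g^{-1}tg = [g,t^{-1}]\,q\,s$ with $[g,t^{-1}] \in G' \subseteq Q$, hence $[g,t^{-1}]q \in Q$ and $g^{-1}tg \in Qs \subseteq T$. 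The inclusion $G' \subseteq Q$ is exactly the assumption that $G/Q$ is abelian, and this generalization of Lemma \ref{GStrichSchnitH} from $G'$ to an arbitrary normal subgroup $Q \trianglelefteq G$ containing $G'$ is the key technical observation, albeit a routine adaptation.

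Finally I would check that $T$ generates $G$. Since $1 \in S$ we have $Q = Q\cdot 1 \subseteq T$, so $Q \leq \langle T \rangle$; moreover the image of $\langle T \rangle$ in $G/Q$ contains the image of $S$, which is $\bar{T}$ and generates $G/Q$. Therefore $\langle T \rangle Q = G$, and combined with $Q \leq \langle T \rangle$ this gives $\langle T \rangle = G$, so $T$ is a $G$-invariant generating transversal for $H \backslash G$ with $1 \in T$. The main obstacle is conceptual rather than computational: one must recognize that quotienting by $Q$ (instead of invoking Lemma \ref{GStrichSchnitH} literally with $G'$) is what makes the generation condition land in $G/Q$, for the literal lemma would demand generating the finer quotient $G/G'$, about which the hypothesis says nothing.
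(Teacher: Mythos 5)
Your proof is correct and follows essentially the same route as the paper: apply Theorem \ref{MainTheoremAbelian} to $G/Q$ and $HQ/Q$, lift the resulting generating transversal to a set $S$ of coset representatives, and take $T = QS$, with the $G$-invariance coming from $G' \leq Q$. The only cosmetic difference is that the paper verifies invariance by conjugating the whole coset $Qt$ inside the abelian quotient rather than via the commutator identity of Lemma \ref{GStrichSchnitH}, but the two computations are interchangeable.
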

\begin{proof}
	Since $G / Q$ is abelian and
	\begin{align*}
		\max\{|O_p(G/Q):O_p(HQ/Q)| \mid p \text{ prime divisor of } G/Q \} > \rk(G/Q),
	\end{align*}
	 it follows from Theorem \ref{MainTheoremAbelian} that there exists a transversal 
	  \begin{align*}
	  	 \widetilde{T}=\{Qt_1, \dotsc, Qt_n\}
	  \end{align*}
 	for $(HQ / Q) \big\backslash (G/Q)$ with $t_1=1$ and $G /Q = \big\langle \widetilde{T} \big\rangle$. 
 	
	We set $\hat{T}:=\{t_1, \dotsc, t_n\}$ and $T:=Q\hat{T}$.
	We show that $T$ is transversal for $H \backslash G$. Clearly, $1 \in T$ and we have
	\begin{align*}
	\bigcup_{t \in T} Ht = \bigcup_{\hat{t} \in \hat{T}} HQ\hat{t} = G.
	\end{align*}
	Furthermore, we obtain
	\begin{align*}
	\left| T \right| = \left| Q \right|   \big| \hat{T} \big| = 	\left| Q \right| \frac{\left|G  \right|}{\left| Q\right|} \frac{\left| Q \right|}{\left| H \right|\left| Q \right|} = \frac{\left| G \right|}{\left| H\right|}.
	\end{align*}
	Thus, $T$ is a transversal for  $H \backslash G$ containing 1. 
	Clearly, $T$ is a generating transversal. 
	Therefore, it remains to show that $T$ is $G$-invariant. Let $g \in G$ and $qt \in T$ with $q \in Q$ and $t \in \hat{T}$. 
	As $Q \trianglelefteq G$, we have $g^{-1}qg \in Q$ and it follows from the fact that $\widetilde{T}$ is $G/Q$-invariant that
	\begin{align*}
	g^{-1}qtg=g^{-1}qgg^{-1}tg \in Qg^{-1}tg = g^{-1}(Qt)g \in \widetilde{T}.
	\end{align*} 
	This yields that $g^{-1}qtg \in Q\hat{T}=T$.
\end{proof}

\pagebreak

\begin{Rem}
	Let the assumption and notation be as in Corollary \ref{CorAbelianGrp}. Suppose that $C_H(Q)=\{1\}$. We show that $G$ acts faithfully on $H \backslash G$.

	Let $h \in \text{ \rm core}_G(H)= \displaystyle\bigcap_{g \in G } H^g$ and let $q \in Q$. Then we have $h=q^{-1}h'q$ for some $h' \in H$. It follows from $Q \trianglelefteq G$ that 
	\begin{align*}
		hh'^{-1}=q^{-1}h'qh'^{-1} \in Q \cap H = \{1\}.
	\end{align*}
	This implies that $h=h'$. Since $q$ was arbitrary, we have $hq=qh$ for all $q \in Q$ and hence, $h\in C_H(Q)$.
	Now it follows that
	\begin{align*}
	\text{ \rm core}_G(H)= \bigcap_{g \in G } H^g \leq C_H(Q)= \{1\}
	\end{align*}
	 and thus, $(G,H,T)$ is a faithful RCC loop folder with $G=\langle T \rangle$. In conclusion, $(G,H,T)$ is an envelop of an RCC loop.
\end{Rem}

With the result above, we can now improve our results from Lemma \ref{GStrichSchnitH}. 

\begin{Rem}
	Let $G$ be a group and $H \leq G$. The commutator subgroup $G'$ is the smallest normal subgroup such that $G/G'$ is abelian. Therefore, if $G' \cap H = \{1\}$ and 	
	\begin{align*}
	\max\{|O_p(G/G'):O_p(HG'/G')| \mid p \text{ prime divisor of } G/G' \} > \rk(G/G'),
	\end{align*}
	we can apply Corollary \ref{CorAbelianGrp}. This yields that there exists a $G$-invariant generating transversal for $H$ in $G$ containing 1. 
\end{Rem}

\newpage
\section{Conjecture for RCC loop folders}
\lhead{\slshape 6 \quad  Conjecture for RCC loop folders}

Let $G$ be a finite group and let $H$ be a subgroup of $G$.
In Lemma \ref{GStrichSchnitH} we showed that if $G' \cap H = \{1\}$, then there exists a $G$-invariant transversal containing 1. Now we want to examine under which conditions the converse statement is true.

Note that the assumptions in Lemma \ref{GStrichSchnitH} imply that $H$ is abelian. 
The next example shows that the existence of a $G$-invariant transversal does not imply that $H$ is abelian.

\begin{Example}
Let $D_{12}:=\langle s,t \mid s^6=t^2=1, tst=s^{-1} \rangle$ be the dihedral group of order 12 and let $H:=\langle s^2,t \rangle \leq D_{12}$. An easy calculation shows that $Z(D_{12})=\langle s^3 \rangle$ and that $D_{12}'=\langle s^2 \rangle$. Hence, $T:=\{1, s^3\}$ is a $D_{12}$-invariant transversal for $H \backslash D_{12}$ but $H$ is not abelian and $D'_{12} \cap H = D_{12}'$.
\end{Example}

Thus, the condition that $H$ is abelian is necessary and we formulate the converse statement as following conjecture:

\begin{Conj}
	\label{Conj}
	If $H$ is abelian and there exists a $G$-invariant transversal $T$ for $H \backslash G$ with $1 \in T$, i.e. $(G,H,T)$ is an RCC loop folder, then we have $G' \cap H = \{1\}$.
\end{Conj}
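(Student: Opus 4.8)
The plan is to combine the loop-folder axioms with the transfer homomorphism. First I would record two reformulations. Since $T$ is $G$-invariant it is a union of $G$-conjugacy classes containing exactly one element of each right coset of $H$; because $T$ is simultaneously a transversal for every conjugate $H^g$, two distinct elements $t,t'\in T$ can never satisfy $t't^{-1}\in H^g$, so $t't^{-1}\notin\bigcup_{g\in G}H^g$. Taking $t'=1$ shows in particular $T\cap\bigcup_g H^g=\{1\}$, i.e. no nontrivial element of $T$ is conjugate into $H$. Moreover, Lemma \ref{NormalizerCentralizer} together with $H$ abelian (so $H\le C_G(H)$) gives $N_G(H)=C_G(H)$.

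The central tool would be the transfer. As $H$ is abelian, the transfer $V\colon G\to H$ is defined and satisfies $G'\le\ker V$. The key point is that the $G$-invariant transversal $T$ evaluates $V$ on elements of $H$ very cleanly: for $x\in H$ and $t\in T$ the coset $Htx$ has $T$-representative $t^x=x^{-1}tx$, since $t^x\in T$ by invariance and $Ht^x=Htx$ because $x\in H$. Hence the transfer factor $tx(t^x)^{-1}=x$ is independent of $t$, and $V(x)=x^{|G:H|}$ for every $x\in H$; that is, $V|_H$ is the $|G:H|$-th power map. Applying this to $x\in G'\cap H\le\ker V$ yields $x^{|G:H|}=1$. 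This already proves the conjecture whenever $\gcd(|H|,|G:H|)=1$ — in particular when $H$ is a Hall (e.g.\ Sylow) subgroup — since then an element of $H$ whose order divides $|G:H|$ must be trivial.

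To attack the general case I would first reduce the prime content: writing $H=\bigoplus_p H_p$ gives $G'\cap H=\bigoplus_p(G'\cap H_p)$, so it suffices to show $G'\cap H_p=\{1\}$ for a single prime, i.e.\ to treat a $p$-element $x$. Next I would set $C=\operatorname{core}_G(H)$ and pass to the faithful RCC loop folder $(G/C,H/C,\overline T)$; an induction on $|G|$ then reduces the claim to showing $G'\cap C=\{1\}$ for the normal abelian subgroup $C\le H$. The hard part, and the \emph{main obstacle}, is the non-coprime case, where the transfer yields only the exponent bound $x^{|G:H|}=1$ and cannot distinguish $x$ from $x^{|G:H|}$. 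To close this gap I expect one must exploit the overlap structure of the $G$-conjugates of $H$ forced by the ``transversal for every $H^g$'' condition: combined with $N_G(H)=C_G(H)$, the size $\bigl|\bigcup_g H^g\bigr|$ is tightly constrained, and the hope is to force the conjugates of $H$ into a (nearly) trivial-intersection configuration, placing $G$ in the Frobenius setting of Theorem \ref{Frobeniustransversale}, where $H\cap G'=\{1\}$ is already known. Making this overlap analysis rigorous — especially controlling $G'\cap\operatorname{core}_G(H)$ in the non-coprime residual case — is where I expect the real difficulty to lie.
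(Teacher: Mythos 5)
You should note first that the statement you are trying to prove is stated in the paper as a \emph{conjecture}, not a theorem: the author verifies it computationally for non-abelian groups of order $<40$ and proves it only in special cases (when $H$ is an abelian Hall or Sylow subgroup, when $G$ is a Frobenius group with abelian complement containing $H$, and when $|G|=p^3$). So no complete proof exists in the paper for you to be measured against, and your own write-up honestly concedes that you do not have one either.

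Within that framing, your partial progress is correct and lines up almost exactly with the paper's. Your transfer computation is sound: for $x\in H$ the $T$-representative of $Htx$ is $t^x=x^{-1}tx$ (it lies in $T$ by $G$-invariance and in $Htx$ because $x\in H$), so every transfer factor equals $x$ and $V|_H$ is the $|G:H|$-th power map; combined with $G'\le\ker V$ this gives $x^{|G:H|}=1$ for $x\in G'\cap H$ and settles the case $\gcd(|H|,|G:H|)=1$. This is precisely the paper's theorem on abelian Hall subgroups (proved there from the weaker hypothesis of an $H$-invariant transversal, following Kochend\"orffer), and your ``find a normal complement for $H$'' strategy is the paper's stated line of attack via Burnside and Zappa. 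The genuine gap is exactly where you place it: in the non-coprime case the transfer only bounds the exponent of $G'\cap H$ by $|G:H|$, and your proposed reduction via $C=\operatorname{core}_G(H)$ gives nothing when the folder is already faithful ($C=\{1\}$), while the hoped-for ``near-Frobenius'' configuration of the conjugates of $H$ is not forced by the hypotheses in any way you make precise. Since the paper itself leaves this case open, you should present the coprime case as a proved partial result and the rest explicitly as an open problem rather than as a proof outline.
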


We verified this conjecture for all non-abelian groups of order smaller than 40 and for all envelops of RCC loops of order smaller than 30 (see \cite{articdiss}) with the computational algebra system GAP \cite{gap}.

One possibility to prove Conjecture \ref{Conj} is to show that $H$ posses a normal complement. Suppose given a normal complement $N$ of $H$. Then $G/N \cong H$ is abelian and thus, $G' \leq N$. This yields that $H \cap G' \leq H \cap N = \{1\}$.

This is the case for example in Chapter 3, where $G$ is a Frobenius group with an abelian complement containing $H$.

Both Burnside and Zappa stated interesting conditions for H possessing a normal complement.

\begin{Theorem}[Burnside,{\cite[IV, Hauptsatz 2.6]{huppert}}]
	Suppose that $H$ is a Sylow subgroup of $G$ such that $H \leq Z(N_G(H))$. Then there exists a normal subgroup $N$ such that $G/N \cong H$.	
\end{Theorem}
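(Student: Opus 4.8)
The plan is to prove this classical normal $p$-complement theorem of Burnside by means of the transfer homomorphism. First I would unpack the hypothesis: the condition $H \leq Z(N_G(H))$ forces $H$ to be abelian, and it says that every element of $N_G(H)$ centralises $H$, so that $N_G(H) = C_G(H)$. Since $H$ is abelian, the transfer homomorphism may be taken with target $H$ itself,
\begin{align*}
V \colon G \to H,
\end{align*}
and the entire proof reduces to showing that $V$ is surjective with $H \cap \ker V = \{1\}$; for then $N := \ker V$ is a normal subgroup with $H \cap N = \{1\}$ and $G/N \cong H$, i.e. a normal complement to $H$.

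The second step is to evaluate $V$ on an element $x \in H$. Fixing a right transversal of $H$ in $G$ and letting $\langle x \rangle$ act on the $\lvert G:H\rvert$ right cosets, one decomposes the cosets into $\langle x \rangle$-orbits; choosing a representative $t_j$ for the orbit of length $\ell_j$, the standard transfer-evaluation formula reads
\begin{align*}
V(x) = \prod_j t_j\, x^{\ell_j}\, t_j^{-1},
\end{align*}
where each factor $t_j x^{\ell_j} t_j^{-1}$ already lies in $H$ and the orbit lengths satisfy $\sum_j \ell_j = \lvert G:H\rvert$.

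The heart of the argument — and the step I expect to be the main obstacle — is a fusion lemma: since $H$ is an abelian Sylow subgroup, any two elements of $H$ that are conjugate in $G$ are already conjugate by an element of $N_G(H)$. I would prove this by a Sylow argument applied inside a centraliser: if $y, y' \in H$ with $y' = t y t^{-1}$, then $H$ and $tHt^{-1}$ are both Sylow $p$-subgroups of $C_G(y')$ and hence conjugate there, which allows one to replace $t$ by an element of $N_G(H)$. Combining this with $N_G(H) = C_G(H)$, conjugation by $N_G(H)$ fixes $H$ pointwise, so $t_j x^{\ell_j} t_j^{-1} = x^{\ell_j}$ for each $j$, and the evaluation formula collapses to
\begin{align*}
V(x) = \prod_j x^{\ell_j} = x^{\sum_j \ell_j} = x^{\lvert G:H\rvert}.
\end{align*}

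Finally, because $H$ is a Sylow subgroup we have $\gcd(\lvert G:H\rvert, \lvert H\rvert) = 1$, so the power map $x \mapsto x^{\lvert G:H\rvert}$ is an automorphism of the abelian group $H$. Hence $V$ restricts to an isomorphism of $H$ onto $H$; in particular $V$ is surjective and $H \cap \ker V = \{1\}$. Setting $N := \ker V$ then yields a normal subgroup with $\Image V = H$, so $G/N \cong H$ and $H \cap N = \{1\}$, which is the asserted normal complement and completes the proof.
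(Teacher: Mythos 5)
Your argument is correct, but there is nothing in the paper to compare it against: the paper states Burnside's theorem purely as a citation (\cite[IV, Hauptsatz 2.6]{huppert}) and gives no proof of its own. What you have written is the classical transfer proof of the normal $p$-complement theorem, and all the steps check out: $H \leq Z(N_G(H))$ does force $H$ abelian and $N_G(H)=C_G(H)$; the orbit form of the transfer evaluation is standard; your Sylow-in-the-centraliser argument correctly establishes Burnside's fusion lemma for an abelian Sylow subgroup ($H$ and $tHt^{-1}$ both lie in $C_G(y')$ because $H$ is abelian, so they are conjugate there, letting you adjust $t$ into $N_G(H)$); and coprimality of $|H|$ and $|G:H|$ makes $x\mapsto x^{|G:H|}$ an automorphism of $H$, giving $\ker V \cap H=\{1\}$ and $\Image V=H$.

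It is worth noting that your route runs parallel to the one proof by transfer that the paper \emph{does} write out, namely its theorem on abelian Hall subgroups admitting an $H$-invariant transversal. There the same evaluation $\tau(h)=h^{|G:H|}$ is obtained, but the collapse of the factors $t_jh^{\ell_j}t_j^{-1}$ to $h^{\ell_j}$ is forced by the $H$-invariance of the chosen transversal rather than by a fusion lemma; in your setting no invariant transversal is available, so the fusion step you identify as the heart of the argument is genuinely necessary, and it is exactly what the Sylow hypothesis buys you. The only cosmetic remark is that the statement as quoted asks only for $G/N\cong H$, so the final sentence about $H\cap N=\{1\}$ being a normal complement is slightly more than required, though of course it follows from your construction.
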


Lemma \ref{NormalizerCentralizer} shows that the Theorem of Burnside is applicable in our case.

\pagebreak

\begin{Cor}
	If $(G,H,T)$ is an RCC loop folder and $H$ is an abelian Sylow subgroup of $G$, then by Lemma \ref{NormalizerCentralizer} we have $C_G(H)=N_G(H)$. Thus, Burnside's theorem yields that $G' \cap H = \{1\}$. In particular, Conjecture \ref{Conj} holds if $H$ is an abelian Sylow subgroup.
\end{Cor}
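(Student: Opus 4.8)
The plan is to verify that the hypotheses of the stated theorem of Burnside are met and then read off the conclusion. First I would apply Lemma \ref{NormalizerCentralizer}, which gives $N_G(H) = H C_G(H)$ since $(G,H,T)$ is an RCC loop folder. Because $H$ is abelian, every element of $H$ commutes with every element of $H$, so $H \leq C_G(H)$ and therefore $H C_G(H) = C_G(H)$. This yields $N_G(H) = C_G(H)$, which is exactly the displayed equality asserted in the statement.

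Next I would verify the centrality condition $H \leq Z(N_G(H))$. By definition of the centralizer, each element of $N_G(H) = C_G(H)$ commutes with every element of $H$; since also $H \leq N_G(H)$, each $h \in H$ lies in the center of $N_G(H)$. With $H$ a Sylow subgroup satisfying $H \leq Z(N_G(H))$, Burnside's theorem supplies a normal subgroup $N \trianglelefteq G$ with $G/N \cong H$.

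Finally I would extract $G' \cap H = \{1\}$. Since $G/N \cong H$ is abelian, the commutator subgroup satisfies $G' \leq N$. Comparing orders, $|N| = |G|/|H|$, so the order of $N$ is coprime to the prime $p$ for which $H$ is a Sylow $p$-subgroup; hence $H \cap N$, being a $p$-subgroup of order dividing $|N|$, is trivial. Consequently $G' \cap H \leq N \cap H = \{1\}$, and since $H$ is assumed abelian this is precisely the conclusion of Conjecture \ref{Conj} in the present case.

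The argument is short because the heavy lifting is done by the two cited results; the only point requiring care is confirming that the ``$G/N \cong H$'' form of Burnside's theorem really produces a normal complement, i.e.\ that $H \cap N = \{1\}$. This is where I would use that $H$ is a $p$-group while $|N|$ is coprime to $p$, which is what legitimizes passing from $G' \leq N$ to $G' \cap H = \{1\}$.
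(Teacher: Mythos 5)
Your proposal is correct and follows essentially the same route as the paper: Lemma \ref{NormalizerCentralizer} plus commutativity of $H$ gives $N_G(H)=C_G(H)$, hence $H\leq Z(N_G(H))$, and Burnside's theorem together with the coprimality of $|H|$ and $|G:H|$ gives $G'\cap H\leq N\cap H=\{1\}$ exactly as the paper's surrounding discussion of normal complements intends. Your explicit verification that $H\cap N=\{1\}$ via the Sylow coprimality is a welcome detail the paper leaves implicit.
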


In the next section, we will employ a theorem of Zappa.

\subsection{Existence of an $H$-invariant transversal}

In this section we suppose that there exists a transversal of $H \backslash G$, which is $H$-invariant.
In the literature, an $H$-invariant transversal of $H\backslash G$ is also called a distinguished system of coset representatives.
We show that our conjecture is true if $H$ is additionally a Hall subgroup and we give a criterion for the existence of a distinguished system of coset representatives for $H \backslash G$.

Zappa showed that if $H$ is a nilpotent Hall subgroup of $G$ possessing a distinguished system of coset representatives, then $G$ contains a normal subgroup $N$ such that $G=HN, H \cap N = \{1\}$ (see \cite[Proposizione XIV 12.1]{zappabook}). 
In \cite{kochendorffer}, Kochendörffer generalised this statement.
Here we prove a slightly weaker theorem than Zappa's theorem, relying on the idea of the proof of Kochendörffer.

\begin{Theorem}
	Let $G$ be a finite group and let $H$ be an abelian Hall-subgroup of $G$. Suppose that there exists an $H$-invariant transversal $T$ for $H \backslash G$.  Then we have $G' \cap H = \{1\}$.
\end{Theorem}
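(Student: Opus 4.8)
The plan is to use the transfer homomorphism, since the hypotheses on $H$ are tailor-made for transfer arguments and the preceding discussion already points toward producing a normal complement for $H$. Because $H$ is abelian we have $H'=\{1\}$, so the transfer associated to $H\leq G$ is a homomorphism $V\colon G\to H$, and since its image lies in the abelian group $H$, its kernel contains $G'$. The target $G'\cap H=\{1\}$ will therefore follow at once if I can show that the restriction $V|_H$ is injective: then $G'\cap H\subseteq(\ker V)\cap H=\ker(V|_H)=\{1\}$. (As a byproduct $\ker V$ is a normal complement to $H$, which is exactly the normal-complement strategy described before the theorem.)

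First I would evaluate $V$ on $H$ using the given $H$-invariant transversal $T$, recalling that the transfer is independent of the chosen transversal so that $T$ may legitimately be used. Fixing $h\in H$ and $t\in T$, the coset $Hth$ has a unique representative in $T$; since $T^h=T$ and $h^{-1}th\in Hth$, that representative is precisely $h^{-1}th$. Writing $th=a_t\,(h^{-1}th)$ with $a_t\in H$, a one-line calculation gives $a_t=th\,(h^{-1}th)^{-1}=t(hh^{-1})t^{-1}h=h$ for every $t\in T$. Hence all transfer factors coincide with $h$, and $V(h)=\prod_{t\in T}a_t=h^{|G:H|}$.

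The second step is purely arithmetic. Since $H$ is a Hall subgroup, $\gcd(|H|,|G:H|)=1$, so the power map $h\mapsto h^{|G:H|}$ is a bijection of the finite abelian group $H$. Thus $V|_H$ is an automorphism of $H$, in particular injective, and the conclusion $G'\cap H=\{1\}$ follows as indicated above.

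The only nontrivial point, and the place where the $H$-invariance of $T$ is essential, is the clean evaluation $V(h)=h^{|G:H|}$: invariance is exactly what forces each transfer factor to collapse to the single element $h$. In writing this up I would be careful to fix the conventions for right cosets and for the transfer factorization $tg=a(t,g)\,\overline{tg}$ consistently with the paper's right-action setup, and to state explicitly that $H'=\{1\}$ lets me regard $V$ as landing in $H$ rather than in $H/H'$.
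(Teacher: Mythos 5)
Your proposal is correct and follows essentially the same route as the paper: both evaluate the transfer $V\colon G\to H$ on elements of $H$ using the $H$-invariant transversal to show $V(h)=h^{|G:H|}$, invoke the Hall condition to make this power map (hence $V|_H$) injective, and conclude from $G'\leq\ker V$ that $G'\cap H=\{1\}$. Your identification of the coset representative of $Hth$ as $h^{-1}th$ is in fact a slightly cleaner way to compute the transfer factors than the paper's argument.
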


\begin{proof}
	The map 
	\begin{align*}
		\tau: G \rightarrow H , x \mapsto \prod_{t \in T} \lambda_x^T(t),
	\end{align*}
	where $\lambda_x^T(t)$ is the unique element $h\in H$ such that $tx=ht'$ for some $t' \in T$, is called transfer from $G$ to $H$ and it is a group homomorphism (see \cite[IV, Hauptsatz 1.4]{huppert}).
	
	Let $h \in H$ and let $h':=\lambda_h^T(t)$ for some $t \in T$. Then we have $th=h't'$ for some $t' \in T$. 
	It follows that $h{h'}^{-1}=t^{-1}h't'{h'}^{-1} \in H$ and since $T$ is $H$-invariant, we have $h't'{h'}^{-1} \in T$. 
	Thus, $t=h't'{h'}^{-1}$. This yields that $h=h'=\lambda_h^T(t)$ and hence, we obtain
	\begin{align*}
		\tau(h)=\prod_{t \in T}\lambda_h^T(t)=\prod_{t \in T}h=h^{|G:H|}.
	\end{align*}
	Now we show that the map $f: H \rightarrow H, h \mapsto h^{|G:H|}$ is bijective, for which it suffices to show that $f$ is injective.
	Let $h \in \ker f$. As $h^{|G:H|}=1$, it follows that $|h|$ divides $|G:H|$. But since $H$ is a Hall-subgroup, $|H|$ and $|G:H|$ are coprime. Thus, $h=1$ and the map $f$ is bijective. In particular, the restriction of $\tau$ to $H$ is bijective. This yields that $\ker \tau \cap H = \{1\}$. Furthermore, we have that $\bigslant{G}{\ker \tau}$ is abelian, because the image of $\tau$ is abelian as subgroup of $H$. Hence, $G' \leq \ker \tau$. We conclude that $G' \cap H \leq \ker \tau \cap H = \{1\}$.
\end{proof}

In \cite{felsch}, Felsch gives existence criteria for a distinguished system of coset representatives for $H \backslash G$ and from \cite{felsch} we obtain some more properties of groups possessing a distinguished system of coset representatives. We show these criteria in a more general approach. Therefore, we give a definition, first.

\begin{Def}[{\cite[1.1 Definition]{felsch}}]
	Let  $U \leq H \leq G $ be subgroups. \linebreak
	A transversal $S$ of $H\backslash G$ is called a $(U,H,G)$-system if $S$ is $U$-invariant.
\end{Def}
Note that an $(H,H,G)$-system is an $H$-invariant transversal for $H\backslash G$ or a distinguished system of coset representatives for $H\backslash G$.

\begin{Theorem}[{\cite[2.1 Satz]{felsch}}]
	\label{Felsch}
	Let $U \leq H \leq G$ be subgroups and let $S$ be a transversal for the double cosets of $H$ and $U$ in $G$. Then the following statements are equivalent:
	\begin{enumerate}[a)]
		\item There exists a $(U,H,G)$-system.
		\item $g \in HC_G(U \cap H^g)$ for all $g \in G$.
		\item $s \in HC_G(U \cap H^s)$ for all $s \in S$.
		\item For every $g \in G$ there exists $h \in H$, such that $x^h=x^g$ for all $x \in U^{g^{-1}}\cap H$.
		\item For every $s \in S$ there exists $h \in H$, such that $x^h=x^s$ for all $x \in U^{s^{-1}}\cap H$.
	\end{enumerate}
\end{Theorem}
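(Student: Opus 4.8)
The plan is to exploit the hypothesis $U \le H$, which makes the combinatorics transparent. First I would record the key translation: for $u \in U \le H$ one has $H s^u = H u^{-1} s u = H s u$, so conjugation by $U$ induces on the right cosets $H \backslash G$ exactly the action by right multiplication $Hg \mapsto Hgu$. Consequently the $U$-orbits on $H\backslash G$ are precisely the double cosets $HgU$, so the given transversal $S$ for the double cosets indexes these orbits, and the stabiliser of a point is $\Stab_U(Hg) = \{u \in U \mid gug^{-1} \in H\} = U \cap H^g$. In this language a $(U,H,G)$-system is nothing but a choice of representative $r(\omega) \in \omega$ for each $\omega \in H\backslash G$ that is equivariant, $r(\omega)^u = r(\omega u)$. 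I would also note the reformulation of (b): writing $g = hc$ with $c \in C_G(U\cap H^g)$ and passing to $s := h^{-1}g \in Hg$, condition (b)$_g$ is equivalent to the existence of a coset representative $s \in Hg$ with $U \cap H^s \le C_G(s)$; and (b)$_g$ depends only on the coset $Hg$, since $H^{hg}=H^g$.

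Next I would dispatch the purely formal equivalences. Conjugating by $g$ identifies $U^{g^{-1}} \cap H = (U \cap H^g)^{g^{-1}}$, so $C_G(U\cap H^g) = g^{-1}C_G(U^{g^{-1}}\cap H)g$, and a one-line manipulation turns $g \in HC_G(U\cap H^g)$ into $g \in C_G(U^{g^{-1}}\cap H)H$. Writing $g = ch$ with $c \in C_G(U^{g^{-1}}\cap H)$ and taking this $h$, one gets $x^g = x^{ch} = x^h$ for every $x \in U^{g^{-1}}\cap H$, and conversely any $h$ witnessing (d) yields $gh^{-1} \in C_G(U^{g^{-1}}\cap H)$; this proves (b)$_g \Leftrightarrow$ (d)$_g$, hence (b) $\Leftrightarrow$ (d), and the identical argument at $g = s$ gives (c) $\Leftrightarrow$ (e). Since (c) is just (b) restricted to the double-coset representatives in $S$, the implication (b) $\Rightarrow$ (c) is immediate, and it remains to close the cycle with (a) $\Rightarrow$ (b) and (c) $\Rightarrow$ (a).

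For (a) $\Rightarrow$ (b) I would take a $U$-invariant transversal and let $s_0$ be its representative of the coset $Hg$. Given $w \in U \cap H^{s_0}$ one has $s_0 w s_0^{-1} \in H$, and since $w \in U \le H$ the element $s_0^w = w^{-1}(s_0 w s_0^{-1})s_0$ lies in $Hs_0$. But $s_0^w$ also lies in the transversal by $U$-invariance, so uniqueness of representatives forces $s_0^w = s_0$, i.e. $w \in C_G(s_0)$. Hence $s_0$ centralises $U \cap H^{s_0}$, which is exactly the reformulated (b) for $s_0$; and because (b)$_g$ depends only on the coset $Hg$, this gives (b) for every $g$.

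The substantive step is (c) $\Rightarrow$ (a), where I would build the transversal orbit by orbit. For each $s \in S$ the orbit of $Hs$ is the double coset $HsU$, and (c), in its reformulated form, supplies a representative $s' \in Hs$ with $\Stab_U(Hs) = U \cap H^{s'} \le C_G(s')$. I would then define $r$ on this orbit by $r(Hs \cdot u) := (s')^u$; the centralising property is precisely what well-definedness needs, because $Hsu = Hsu'$ means $uu'^{-1} \in \Stab_U(Hs) \le C_G(s')$, whence $(s')^u = (s')^{u'}$. Each $(s')^u$ lies in its prescribed coset $Hsu$, the assignment is $U$-equivariant by design, and taking the disjoint union over $s \in S$ yields a genuine $U$-invariant transversal. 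The main obstacle is exactly this orbit-wise consistency: the role of conditions (b)--(e) is to guarantee that the stabiliser of each orbit can be made to fix a single coset representative, so that conjugation transports that representative coherently across the whole orbit; everything else reduces to the bookkeeping of the earlier paragraphs.
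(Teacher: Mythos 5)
Your proposal is correct and follows essentially the same route as the paper: your argument that the transversal element $s_0$ of $Hg$ must centralise $U\cap H^{s_0}$ is the paper's Lemma \ref{HelpFelsch1}, your orbit-by-orbit construction $r(Hs\cdot u):=(s')^u$ with the well-definedness check via $uu'^{-1}\in U\cap H^{s'}\le C_G(s')$ is exactly the paper's Lemma \ref{HelpFelsch2}, and the remaining equivalences are the same conjugation bookkeeping (the paper runs the cycle a)\,$\Rightarrow$\,b)\,$\Rightarrow$\,d)\,$\Rightarrow$\,e)\,$\Rightarrow$\,c)\,$\Rightarrow$\,a) rather than proving b)\,$\Leftrightarrow$\,d) and c)\,$\Leftrightarrow$\,e) directly, which is only an organisational difference). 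Your framing of the $U$-conjugation action on $H\backslash G$ as right multiplication, with orbits the double cosets and stabilisers $U\cap H^g$, is a clean conceptual packaging of the same computations.
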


To prove this theorem, we first prove two lemmas. 

\begin{Lemma}[{\cite[2.2 Hilfssatz]{felsch}}]
	\label{HelpFelsch1}
	Let $U \leq H \leq G$  be subgroups. Let $R$ be a $(U,H,G)$-system and $r \in R$. Then $r \in C_G(U \cap H^r)$.
\end{Lemma}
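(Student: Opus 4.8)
The plan is to exploit the two defining features of a $(U,H,G)$-system $R$: it is invariant under conjugation by $U$, and, being a transversal for $H\backslash G$, it contains exactly one representative of each right coset. Fix $r \in R$ and let $x \in U \cap H^r$ be arbitrary; the goal is to show that $r^x = r$, which is equivalent to $x$ and $r$ commuting. Since $x \in U$ and $R^u = R$ for all $u \in U$, the conjugate $r^x = x^{-1}rx$ again lies in $R$.

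The key step is to check that $r^x$ lies in the same right coset as $r$. I would compute $Hr^x = Hx^{-1}rx$. Because $x \in U \leq H$, we have $x^{-1} \in H$, so $Hx^{-1}=H$ and therefore $Hr^x = Hrx$. On the other hand, $x \in H^r = r^{-1}Hr$ gives $rxr^{-1} \in H$, that is, $rx \in Hr$, whence $Hrx = Hr$. Combining these, $Hr^x = Hr$.

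Now both $r$ and $r^x$ belong to $R$ and lie in the right coset $Hr$; since $R$ is a transversal for $H\backslash G$, it has a unique element in each right coset, forcing $r^x = r$. Thus $x^{-1}rx = r$, so $x$ commutes with $r$. As $x$ was an arbitrary element of $U \cap H^r$, we conclude that $r \in C_G(U \cap H^r)$.

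The only delicate point is the bookkeeping with the conjugation convention $a^g = g^{-1}ag$ and with right cosets: the identity $Hr^x = Hr$ uses $x \in H$ to absorb $x^{-1}$ on the left of $H$, and uses $x \in H^r$ to absorb $rx$ back into $Hr$. It is precisely the simultaneous membership $x \in U \cap H^r$ that legitimises both absorptions, which explains why this particular intersection appears in the statement.
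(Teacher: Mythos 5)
Your proof is correct and follows essentially the same route as the paper: use $U$-invariance of $R$ to place $r^x$ in $R$, use $x\in H$ and $x\in H^r$ to show $Hr^x=Hr$, and invoke uniqueness of coset representatives to conclude $r^x=r$. The only difference is cosmetic (you absorb $x^{-1}$ into $H$ first and then $rx$ into $Hr$, while the paper does it in the opposite order).
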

\begin{proof}
	Let $x \in U \cap H^r$. As $x \in H$, we have 
	\begin{align*}
		x^{-1}rx \in x^{-1}rH^r=x^{-1}Hr=Hr.
	\end{align*}
	 On the other hand, we have $x^{-1}rx \in x^{-1}Rx=R$. Thus, it follows that  $x^{-1}rx=r$ and since $x$ was chosen arbitrary, we conclude $r \in C_G(U \cap H^r)$.
\end{proof}

\begin{Lemma}[{\cite[2.3 Hilfssatz]{felsch}}]
	\label{HelpFelsch2}
	Let $U \leq H \leq G$ be subgroups and let $S$ be a transversal for the double cosets of $H$ and $U$ in $G$. If for every $s \in S$ there exists an element $s^* \in HsU$ such that $s^* \in C_G(H \cap H^{s^*})$, then the set of the elements $s^*$ and their conjugates under $U$ is a $(U,H,G)$-system.
\end{Lemma}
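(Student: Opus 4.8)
The plan is to take $R := \bigcup_{s \in S} \{ (s^*)^u \mid u \in U \}$, the union of the $U$-conjugacy orbits of the prescribed elements $s^*$, and to verify that $R$ is a transversal for $H \backslash G$ which is $U$-invariant; this is exactly the assertion that $R$ is a $(U,H,G)$-system. The $U$-invariance is immediate from the construction: $R$ is a union of complete orbits under conjugation by $U$, since conjugating $(s^*)^u$ by any $u' \in U$ yields $(s^*)^{uu'} \in R$. So the real content lies in showing that $R$ meets every right coset of $H$ exactly once.

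The observation I would establish first, and which drives everything, is that since $U \leq H$ we have $H(s^*)^u = H u^{-1} s^* u = H s^* u$ for every $u \in U$, because $u^{-1} \in U \leq H$ forces $H u^{-1} = H$. Hence the right cosets met by the conjugates of a fixed $s^*$ are precisely $\{ H s^* u \mid u \in U \}$, i.e. exactly the right cosets of $H$ contained in the double coset $H s^* U = H s U$. As $S$ is a transversal for the double cosets $H \backslash G / U$, these double cosets partition $G$ and their right cosets exhaust $H \backslash G$; consequently $R$ meets every right coset of $H$ in $G$, and it suffices to treat one double coset at a time.

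It then remains to show that, within a single double coset, the distinct conjugates of $s^*$ cover its right cosets bijectively. Here I would compare two equivalence relations on $U$. Two conjugates $(s^*)^{u_1}$ and $(s^*)^{u_2}$ are \emph{equal as group elements} iff $u_1 u_2^{-1} \in U \cap C_G(s^*)$, whereas they lie in the \emph{same right coset} iff $u_1 u_2^{-1} \in U \cap H^{s^*}$ (using $H s^* u_1 = H s^* u_2 \iff u_1 u_2^{-1} \in H^{s^*}$). The key claim is that these two conditions coincide, namely $U \cap C_G(s^*) = U \cap H^{s^*}$, which makes the assignment $(s^*)^u \mapsto H s^* u$ a well-defined bijection from the distinct conjugates of $s^*$ onto the right cosets inside $H s U$.

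The inclusion $U \cap C_G(s^*) \subseteq U \cap H^{s^*}$ is routine: if $u \in U$ centralizes $s^*$, then $s^* u (s^*)^{-1} = u \in U \leq H$, so $u \in H^{s^*}$. The reverse inclusion is the main obstacle and is exactly where the hypothesis is used: given $u \in U \cap H^{s^*}$, the fact that $u \in U \leq H$ together with $u \in H^{s^*}$ gives $u \in H \cap H^{s^*}$, and then the assumption $s^* \in C_G(H \cap H^{s^*})$ forces $s^*$ to centralize $u$, i.e. $u \in C_G(s^*)$. This is precisely why the hypothesis is phrased with $H \cap H^{s^*}$: one needs the centralizing condition to apply to \emph{every} element of $U \cap H^{s^*}$, and because $U \leq H$ all such elements lie in $H \cap H^{s^*}$. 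Assembling the pieces, each orbit $\{ (s^*)^u \mid u \in U \}$ is a transversal for the right cosets inside its double coset, and the disjoint union over $s \in S$ is therefore a $U$-invariant transversal for $H \backslash G$, as claimed.
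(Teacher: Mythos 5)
Your proof is correct and follows essentially the same route as the paper: both take the union of the $U$-conjugates of the elements $s^*$, use $U\leq H$ to identify the cosets $H(s^*)^u$ with $Hs^*u$ and hence cover each double coset, and then use the centralizing hypothesis to show that landing in the same coset forces equality of conjugates. Your packaging of the injectivity step as the subgroup identity $U\cap C_G(s^*)=U\cap H^{s^*}$ is just a cleaner reorganization of the paper's direct computation with $x,y,h$, not a different argument.
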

The proof is based on the proof of \cite[Lemma 2]{zappa2}.
\begin{proof}
	Set $S^*:=\{s^* \mid s \in S\}$ and
	$T:=\displaystyle\bigcup_{u \in U}\bigcup_{s \in S^*} u^{-1}su$.
	Clearly, $S^*$ is a transversal for the double cosets of $H$ and $U$ in $G$ and hence, it follows that 
	\begin{align*}
	\bigcup_{t \in T} Ht =	\bigcup_{u \in U}\bigcup_{s \in S^*} H su = \bigcup_{s \in S^*} HsU=G.
	\end{align*}
	Thus, it remains to show that at most one element of each coset of $H$ in $G$ lies in $T$. Suppose that $Ht_1=Ht_2$ for some $t_1,t_2 \in T$. Then $t_1=ht_2$ for some $h \in H$ and we have $t_1=x^{-1}s_1x$ and $t_2=y^{-1}s_2y$ for some $x,y \in U$ and $s_1,s_2 \in S^*$. As $Ht_1U=Ht_2U$, we obtain that $Hs_1U=Hs_2U$ and hence,  $s_1=s_2$. It follows from $x^{-1}s_1x=hy^{-1}s_1y$ and $xhy^{-1} \in H$ that 
	\begin{align*}
		xy^{-1}=s_1^{-1}xhy^{-1}s_1 \in H^{s_1}.
	\end{align*} 
	Furthermore, we have $xy^{-1} \in U$ and hence, $xy^{-1} \in U \cap H^{s_1}$. The assumption $s_1 \in C_G(U \cap H^{s_1})$ yields that 
	\begin{align*}
		xy^{-1}=s_1xy^{-1}s_1^{-1}=xhy^{-1}
	\end{align*}
 	and thus, $t_1=t_2$. In conclusion, $T$ is a $(U,H,G)$-system.
\end{proof}

Now we can prove Theorem \ref{Felsch}.

\begin{proof}[Proof of Theorem \ref{Felsch}]
\textbf{a) $\Rightarrow$ b): }
Let $R$ be a $(U,H,G)$-system and let $g \in G$. 
Then $g=hr$ for some $h\in H$ and $r \in R$. Lemma \ref{HelpFelsch1} yields that $r \in C_G(U \cap H^r)$. Since we have $r^{-1}Hr=g^{-1}Hg$, it follows that $r \in C_G(U \cap H^r)=C_G(U \cap H^g)$ and hence, we have $g \in HC_G(U \cap H^g)$.

\textbf{b) $\Rightarrow$ d):} Let $g \in G$. By our assumption, we have $g=hr$ for some $h \in H$ and $r \in C_G(U \cap H^g)$. Let $x \in U^{g^{-1}} \cap H$. Then $x^g \in U \cap H^g$. It follows that $x^h=x^{gr^{-1}}=x^g$. Since $g$ and $x$ were chosen arbitrarily, statement d) follows.

\textbf{d) $\Rightarrow$ e)} Statement e) follows directly, since $S \subseteq G$.

\textbf{e) $\Rightarrow$ c)} Let $s \in S$. By assumption, there exists $h \in H$ such that $x^h=x^s$ for all $x \in U^{s^{-1}} \cap H$. This implies that $(y^{s^{-1}})^h=(y^{s^{-1}})^s=y$ for all $y \in U \cap H^s$ and hence, we have $c:=s^{-1}h \in C_G(U \cap H^s)$. This yields that
\begin{align*}
s=hc^{-1} \in HC_G(U \cap H^s).
\end{align*}

\textbf{c) $\Rightarrow$ a)} Let $s \in S$. Then it follows from statement c) that $s=hs^*$ for some $h \in H$ and $s^* \in C_G(U \cap H^s)$. Since $H^s=H^{s^*}$, we have $s^* \in C_G(U \cap H^{s^*})$ and we also obtain that $s^*=h^{-1}s \in HsU$. Thus, the conditions of Lemma \ref{HelpFelsch2} are fulfilled and it follows that there exists a $(U,H,G)$-system.                                              
\end{proof}

With Theorem \ref{Felsch} we obtain directly the following criterion for the existence of an $H$-invariant transversal for $H \backslash G$ if $H$ is abelian.

\begin{Cor}[{\cite[2.5 Korollar]{felsch}}]
 Let $H$ be abelian. Then $H$ possess a distinguished system of coset representatives if and only if $g \in C_G(H \cap H^g)$ for all $g \in G$.
\end{Cor}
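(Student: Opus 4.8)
The plan is to specialize Theorem \ref{Felsch} to the case $U = H$ and then to exploit the hypothesis that $H$ is abelian in order to simplify the resulting condition. First I would observe that, by the remark following the definition of a $(U,H,G)$-system, an $(H,H,G)$-system is precisely a distinguished system of coset representatives for $H \backslash G$. Hence the assertion that $H$ possesses such a system is exactly statement a) of Theorem \ref{Felsch} read with $U = H$. Note that $U = H$ is admissible there, since the standing hypothesis $U \leq H \leq G$ holds with equality, so no extra work is needed to invoke the theorem.

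Applying the equivalence a) $\Leftrightarrow$ b) of Theorem \ref{Felsch} with $U = H$, I obtain that $H$ possesses a distinguished system of coset representatives if and only if $g \in H C_G(H \cap H^g)$ for all $g \in G$. It therefore only remains to show that, for abelian $H$, the leading factor $H$ may be dropped, i.e.\ that $H C_G(H \cap H^g) = C_G(H \cap H^g)$ for every $g \in G$.

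The key (and essentially only substantive) step is the following observation. Fix $g \in G$. Since $H \cap H^g \leq H$ and $H$ is abelian, every element of $H$ commutes with every element of $H \cap H^g$, so $H \leq C_G(H \cap H^g)$. As $C_G(H \cap H^g)$ is a subgroup containing $H$, this yields $H C_G(H \cap H^g) = C_G(H \cap H^g)$. Consequently $g \in H C_G(H \cap H^g)$ if and only if $g \in C_G(H \cap H^g)$, and substituting this equivalence into the criterion furnished by Theorem \ref{Felsch} gives exactly the claimed condition $g \in C_G(H \cap H^g)$ for all $g \in G$.

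There is no genuine obstacle beyond this remark: the real content is already packaged in Theorem \ref{Felsch}, and the corollary is a direct specialization. The only point requiring any care is the set-theoretic identity $H C_G(H \cap H^g) = C_G(H \cap H^g)$, which follows at once from $H \leq C_G(H \cap H^g)$ together with the fact that $C_G(H \cap H^g)$ is closed under multiplication; I would state that containment explicitly, since it is precisely where the abelianness of $H$ enters.
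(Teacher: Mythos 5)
Your proof is correct and is essentially the argument the paper intends: the paper derives the corollary directly from Theorem \ref{Felsch} with $U=H$ via the equivalence a)~$\Leftrightarrow$~b), and the abelianness of $H$ gives $H \leq C_G(H \cap H^g)$ so that the factor $H$ can be absorbed. Your explicit note on where abelianness enters is exactly the right (and only) point of substance.
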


The proofs of the previous theorem and the previous lemmas rely on the fact that $U \leq H$. 
Thus, the stronger condition of the existence of a $G$-invariant transversal for $H \backslash G$ has to be investigated with different methods.

\subsection{Existence of a $G$-invariant transversal}

In the next example we show that it is not enough to demand that there exists a $H$-invariant transversal for $H \backslash G$ in general. For the Conjecture \ref{Conj}, we need $G$-invariance, as the following example shows.
	
	\begin{Example}
		Let $G:=Q_8$ and let $H:=Z(G)$. Then $H$ is abelian and every transversal of $H \backslash G$ is $H$-invariant, but there does not exist any $G$-invariant transversal for $H \backslash G$. Furthermore,  we have $H=Z(G)=G'$. Thus, $H \cap G'\neq \{1\} $. 
	\end{Example}

	However, if $G$ is a group of order $p^3$ with $p \in \Prim$ and there exists a $G$-invariant transversal for $H\backslash G$, then the Conjecture \ref{Conj} holds.

	\begin{Lemma}
		\label{groupp3}
		Let $G$ be a non-abelian $p$-group of order $p^3$ with $p \in \Prim$. Then we have $G'=Z(G)$ and $|Z(G)|=|G'|=p$.
	\end{Lemma}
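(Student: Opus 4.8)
The plan is to pin down the order of $Z(G)$ first and then deduce everything about $G'$ from it. Since $G$ is a nontrivial $p$-group, its center is nontrivial, so $|Z(G)| \in \{p, p^2\}$; the value $p^3$ is excluded because $|Z(G)| = p^3$ would mean $Z(G) = G$, contradicting that $G$ is non-abelian. To rule out $|Z(G)| = p^2$ I would invoke the standard fact that if $G/Z(G)$ is cyclic then $G$ is abelian: if $|Z(G)| = p^2$, then $G/Z(G)$ has order $p$ and is therefore cyclic, forcing $G$ to be abelian, a contradiction. Hence $|Z(G)| = p$.

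Next I would handle $G'$. Once $|Z(G)| = p$ is established, the quotient $G/Z(G)$ has order $p^2$, and every group of order $p^2$ is abelian. Therefore $G/Z(G)$ is abelian, which is equivalent to $G' \leq Z(G)$. Since $G$ is non-abelian, $G' \neq \{1\}$, so $|G'| \geq p$; combined with $G' \leq Z(G)$ and $|Z(G)| = p$ this forces $|G'| = p$ and, because a subgroup of order $p$ inside a group of order $p$ coincides with it, $G' = Z(G)$.

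I do not anticipate any serious obstacle, since each ingredient is a classical fact about $p$-groups. The only points requiring a line of justification are that $G/Z(G)$ cyclic implies $G$ abelian and that groups of order $p^2$ are abelian; both follow from the nontriviality of the center of a $p$-group applied to the relevant quotient, so I would either cite these facts or supply the short arguments. The whole proof is thus a matter of assembling these standard observations in the right order.
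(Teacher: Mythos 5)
Your proposal is correct and follows essentially the same route as the paper: nontriviality of the center of a $p$-group, exclusion of $|Z(G)|=p^2$ via the fact that $G/Z(G)$ cyclic forces $G$ abelian, then $G'\leq Z(G)$ because groups of order $p^2$ are abelian, and finally $G'=Z(G)$ since $G'$ is nontrivial. No differences worth noting.
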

	\begin{proof}
		Because $G$ is a non-trivial $p$-group, the center $Z(G)$ is non-trivial. As $G$ is non-abelian, it follows that $Z(G) \neq G$. Moreover, $G/Z(G)$ is not cyclic and therefore, we obtain $|G:Z(G)|=p^2$ and $|Z(G)|=p$.
		Since every group of order $p^2$ is abelian, we have $G' \leq Z(G)$.
		As $G$ is non-abelian, $G'$ is non-trivial and thus, $G'=Z(G)$.
	\end{proof}
	
	\pagebreak
	
	\begin{Lemma}
		Let $G$ be a $p$-group and $H \leq G$ abelian. Suppose that there exists a $G$-invariant transversal $T$ of $H \backslash G$ containing 1. If we have $G'=Z(G)$ and $|Z(G)|=|G'|=p$, then we have $G' \cap H = \{1\}$.
	\end{Lemma}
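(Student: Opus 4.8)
The plan is to argue by contradiction. Since $|G'| = p$ is prime, the intersection $G' \cap H$ is a subgroup of $G'$ and hence equals either $\{1\}$ or all of $G'$; so it suffices to rule out $G' \leq H$. Thus I would assume $G' \cap H = G'$, i.e. $G' \leq H$, and aim for a contradiction. Note first that this assumption makes $H$ normal in $G$ with abelian quotient $G/H$, which is the feature I want to exploit.

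The key observation is that $G' \leq H$ forces every conjugacy class of $G$ to stay inside a single right coset of $H$. Indeed, for any $x,g \in G$ we have $x^g x^{-1} = g^{-1}xgx^{-1} = [g,x^{-1}] \in G' \subseteq H$, exactly as in the computation in Lemma \ref{GStrichSchnitH}, so $x^g \in Hx$. In particular, for each $t \in T$ the whole class $t^G$ is contained in $Ht$.

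Now I would bring the two defining properties of $T$ together. Since $T$ is $G$-invariant it is a union of conjugacy classes, so $t^G \subseteq T$; since $T$ is a transversal for $H \backslash G$, it meets the coset $Ht$ in exactly the one element $t$. Combining this with the previous paragraph gives $t^G \subseteq T \cap Ht = \{t\}$, whence $t^G = \{t\}$ and $t \in Z(G)$. Therefore $T \subseteq Z(G) = G' \leq H$. But then every representative $t \in T$ lies in $H$, so $Ht = H$ for all $t \in T$; as the cosets $Ht$ ($t \in T$) are pairwise distinct, this forces $|T| = 1$, i.e. $G = H$. Since $H$ is abelian, that would make $G$ abelian, contradicting $G' = Z(G)$ of order $p \neq 1$. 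Hence $G' \not\leq H$, and so $G' \cap H = \{1\}$.

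I expect the only genuine content to be the coset-stability step $x^g \in Hx$ (which is precisely where $G' \leq H$ is used) together with the clean ``a union of classes that meets each coset once collapses onto central elements'' argument; everything else is bookkeeping. The hypothesis $G' = Z(G)$ enters exactly at the inclusion $T \subseteq Z(G) \subseteq H$, while $|G'| = p$ serves both to give the dichotomy for $G' \cap H$ and to guarantee that $G$ is non-abelian, so that $|G:H| > 1$ and the conclusion $|T| = 1$ is indeed a contradiction. The one point to handle carefully is keeping the bookkeeping of right cosets consistent throughout.
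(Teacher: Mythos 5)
Your proof is correct, but it takes a genuinely different route from the one in the paper. The paper argues directly via the class equation of the $p$-group: since $H=G$ would make $G$ abelian and contradict $|G'|=p$, the size $|T|=|G:H|$ is a positive power of $p$; as $T$ is a union of conjugacy classes, each of $p$-power size, the number of singleton classes contained in $T$ is divisible by $p$, and since $1\in T$ there are at least $p$ of them, whence $Z(G)=G'\subseteq T$ and $G'\cap H\subseteq T\cap H=\{1\}$. You instead argue by contradiction from the dichotomy that $G'\cap H$ is either trivial or all of $G'$ (using $|G'|=p$): assuming $G'\leq H$, the identity $x^g=[g,x^{-1}]x$ traps every conjugacy class inside a single right coset of $H$, so each class contained in $T$ collapses to the single element it shares with its coset, giving $T\subseteq Z(G)=G'\leq H$, hence $|T|=1$ and $G=H$ abelian, contradicting $|G'|=p$. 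The paper's counting argument yields the stronger structural conclusion $Z(G)\subseteq T$ but genuinely relies on $G$ being a $p$-group; your argument never uses that hypothesis at all --- only $|G'|=p$, $G'=Z(G)$, the abelianness of $H$ and the two defining properties of $T$ --- so it is slightly more general and more elementary, at the cost of being indirect. Both proofs are valid under the stated hypotheses.
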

	\begin{proof}
		Since $T$ is $G$-invariant, $T$ is a union of conjugacy classes of $G$. As $1 \in T$, we know that $T$ contains at least $p$ conjugacy classes with exactly one element. Therefore, $T$ contains at least $p$ elements of $Z(G)$ and hence, $G'=Z(G) \subseteq T$. It follows that $G' \cap H \subseteq T \cap H = \{1\}$.
	\end{proof}
	
	This Lemma and Lemma \ref{groupp3} yield that the Conjecture \ref{Conj} holds for groups of order $p^3$ with $p \in \Prim$.

\newpage
\lhead{}
\bibliographystyle{abbrv}
\bibliography{projekt}

\end{document}